\bfseries\color{brown},
\setlist[enumerate]{label=\alph*\upshape), nolistsep}
\NewDocumentCommand\Cmd{ sm }{\textsf{\textbackslash #2}\IfBooleanT{#1}{$\{\ldots\}$}}
\newcommand\enumref[2]{\hyperref[#2]{\autoref*{#1}(\autoref*{#2})}}
\def\NewTheorem#1{%
  \newaliascnt{#1}{equation}%
  \newtheorem{#1}[#1]{#1}%
  \aliascntresetthe{#1}%
  \expandafter\def\csname #1autorefname\endcsname{#1}%
}
\newcommand\blam{{\boldsymbol\lambda}}
\newcommand\bmu{{\boldsymbol\mu}}
\DeclareMathOperator\Shape{Shape}
\def\({\big(}
\def\){\big)}
\def\Z{\mathbb{Z}}
\def\Q{\mathbb{Q}}
\def\a{\mathfrak{a}}
\def\t{\mathfrak{t}}
\def\s{\mathfrak{s}}
\def\u{\mathfrak{u}}
\def\v{\mathfrak{v}}
\def\lam{\lambda}
\def\Sym{\mathfrak{S}}
\newcommand\HH{\mathscr{H}}
\def\P{\mathscr{P}}
\def\bQ{\mathbf{Q}}
\def\bu{\mathbf{u}}
\def\mL{\mathcal{L}}
\def\fm{\mathfrak{m}}
\def\fn{\mathfrak{n}}
\def\rmf{{\rm f}}
\def\rmg{{\rm g}}
\def\rmF{{\rm F}}
\def\rmm{{\rm m}}
\def\rmn{{\rm n}}
\DeclareMathOperator\res{res}
\DeclareMathOperator\Std{Std}
\def\wres{c}
\def\wgamma{r}
\def\mff{\mathfrak{f}}
\def\mfg{\mathfrak{g}}
\begin{document}

\title{On the seminormal bases and dual seminormal bases of the cyclotomic Hecke algebras of type $G(\ell,1,n)$}
\subjclass[2010]{20C08, 16G99, 06B15}
\keywords{Hecke algebras, seminormal bases, dual seminormal bases}
\author{Jun Hu}\address{MIIT Key Laboratory of Mathematical Theory and Computation in Information Security\\
  School of Mathematical and Statistics\\
  Beijing Institute of Technology\\
  Beijing, 100081, P.R. China}
\email{junhu404@bit.edu.cn}

\author{Shixuan Wang\textsuperscript{\Letter}}\thanks{\Letter Corresponding author: Shixuan Wang \qquad Email: 3120195740@bit.edu.cn}\address{School of Mathematical and Statistics\\
  Beijing Institute of Technology\\
  Beijing, 100081, P.R. China}
\email{3120195740@bit.edu.cn}

\numberwithin{equation}{section}
\newtheorem{prop}[equation]{Proposition}
\newtheorem{thm}[equation]{Theorem}
\newtheorem{cor}[equation]{Corollary}
\newtheorem{conj}[equation]{Conjecture}
\newtheorem{hcond}[equation]{Homogeneous Admissible Condition}
\newtheorem{lem}[equation]{Lemma}
\newtheorem{problem}[equation]{Conjecture}
\newtheorem{mainthm}[equation]{Main Theorem}
\theoremstyle{definition}
\newtheorem{dfn}[equation]{Definition}
\theoremstyle{remark}
\newtheorem{rem}[equation]{Remark}
\newtheorem{dnt}[equation]{Definition and Theorem}
\newenvironment{Def}[2][Definition~\theequation.]{\refstepcounter{equation}\par\medskip
	\noindent \textbf{Definition~ \theequation.} \hskip \labelsep {\bfseries #2.} \rmfamily}{\medskip}

\begin{abstract} This paper studies the seminormal bases $\{\mff_{\s\t}\}, \{{\rm f}_{\s\t}\}$, and the dual seminormal bases $\{\mfg_{\s\t}\}, \{{\rm g}_{\s\t}\}$ of the non-degenerate and the degenerate cyclotomic Hecke algebras $\HH_{\ell,n}$ of type $G(\ell,1,n)$. We present some explicit formulae for the constants $\alpha_{\s\t}:=\mfg_{\s\t}/\mff_{\s\t}\in K^\times$, $a_{\s\t}:={\rm g}_{\s\t}/{\rm f}_{\s\t}\in K^\times$ in terms of the $\gamma$-coefficients $\{\gamma_\u, \gamma'_\u\}$ and the $r$-coefficients $\{r_\u, r'_\u\}$ of $\HH_{\ell,n}$. In particular, we answer a question \cite[Remark 3.6]{Ma} of Mathas on the rationality of square roots of some quotients of products of $\gamma$-coefficients. We obtain some explicit formulae for the expansion of each seminormal bases of $\HH_{\ell,n-1}$ as a linear combination of the seminormal bases of $\HH_{\ell,n}$ under the natural inclusion $\HH_{\ell,n-1}\hookrightarrow\HH_{\ell,n}$.
\end{abstract}

\maketitle
\setcounter{tocdepth}{1}

\section{Introduction}

Let $\ell,n$ be two positive integers. The cyclotomic Hecke algebras $\HH_{\ell,n}$ of type $G(\ell,1,n)$, also known as Ariki-Koike algebras, can be viewed as some generalizations of the Iwahori-Hecke algebras of types $A$ and $B$. They were introduced by Brou\'e and Malle (\cite{BM:cyc}), and independently by Ariki and Koike (\cite{A2},\cite{AK}), and they play an important role in the modular representation theory of finite groups of Lie type over fields of non-defining characteristic. These algebras have been studied extensively both because of their rich representation theory and because of their close relationships with the affine Hecke algebras of type $A$, KLR algebras, BGG category $\mathcal{O}$ and geometric representation theory, see \cite{A3}, \cite{BK}, \cite{BK:GradedKL}, \cite{BK:Gradedecomp}, \cite{Ch}, \cite{Gro} and \cite{Klesh:book}.

The cyclotomic Hecke algebras $\HH_{\ell,n}$ is cellular in the sense of Graham and Lehrer (\cite{GL}). Using the cellular bases of $\HH_{\ell,n}$ constructed in \cite{DJM:cyc} and \cite{Ma}, Mathas has constructed in \cite{Ma} a seminormal basis $\{\mff_{\s\t}\}$ and a dual seminormal basis $\{\mfg_{\s\t}\}$ for the non-degenerate cyclotomic Hecke algebras $\HH_{\ell,n}(q,\bQ)$ when they are semisimple and $q\neq 1$. These seminormal bases are important not only for the semisimple representation theory of $\HH_{\ell,n}$, but also for the modular representation theory of $\HH_{\ell,n}$, see \cite{Ma2}, \cite{HuMathas:GradedCellular} and \cite{HuMathas:SeminormalQuiver}. For the degenerate cyclotomic Hecke algebra $H_{\ell,n}(\bu)$, there are similar constructions and results (e.g., seminormal basis $\{{\rm f}_{\s\t}\}$, dual seminormal basis $\{{\rm g}_{\s\t}\}$) in \cite{AMR} and \cite{Zh}. Much of the theory on the seminormal bases applied in the paper goes back to Murphy in the symmetric groups and associated Hecke algebras cases (i.e., $\ell=1$), see \cite{Mur1}, \cite{Mur2}, \cite{Mur3} and \cite{Mur4}. By the semisimplicity criterion of $\HH_{\ell,n}$ and some eigenvalue (w.r.t. the Jucys-Murphy operators of $\HH_{\ell,n}$) consideration, we see that $\alpha_{\s\t}:=\mfg_{\s\t}/\mff_{\s\t}\in K^\times$ and  $a_{\s\t}:={\rm g}_{\s\t}/{\rm f}_{\s\t}\in K^\times$, where $K$ is the ground field. However, these constants $\alpha_{\s\t}, a_{\s\t}$ were not explicitly known as rational functions in the literature at the moment. The purpose of this paper is to give some explicit formulae of $\alpha_{\s\t}, a_{\s\t}$ for both the non-degenerate and the degenerate cyclotomic Hecke algebras. To state our main result, we need some definitions and notations.

Let $R$ be an integral domain and $q\in R^\times$. Let $\bQ=(Q_1,\cdots,Q_\ell)$, where $Q_1,\cdots,Q_\ell \in R$. The non-degenerate cyclotomic algebra $\HH_{\ell,n}(q,\bQ)$ of type $G(\ell,1,n)$ is the unital associative $R$-algebra with generators $T_0,T_1,\cdots,T_{n-1}$ and the following defining relations:
$$\begin{aligned}
&(T_0-Q_1)\cdots(T_0-Q_\ell)=0;\\
&T_0T_1T_0T_1=T_1T_0T_1T_0;\\
&(T_i-q)(T_i+1)=0,\quad \forall\,1\leq i\leq n-1;\\
&T_iT_j=T_jT_i,\,\,\forall\,1\leq i<j-1<n-1,\\
&T_iT_{i+1}T_i=T_{i+1}T_iT_{i+1},\,\,\forall\,1\leq i<n-1.
\end{aligned}$$
Following \cite[\S2]{DJM}, we define $$ \mL_m:=q^{1-m}T_{m-1}\cdots T_1T_0T_1\cdots T_{m-1},\,\, m=1,2,\cdots,n, $$
and call them the Jucys-Murphy operators of $\HH_{\ell,n}(q,\bQ)$.

Let $\P_n$ be the set of multipartitions of $n$. For each $\blam\in\P_n$, let $\Std(\blam)$ be the set of standard $\blam$-tableaux. Let $\{\fm_{\s\t}|\s,\t\in\Std(\blam),\blam\in\P_n\}$ be the Dipper-James-Mathas cellular basis of $\HH_{\ell,n}(q,\bQ)$. The definition of $\fm_{\s\t}$ makes use of a ``trivial representation'' of the Hecke algebra $\HH_q(\Sym_\blam)$ associated to a standard Young subgroup $\Sym_\blam$ of $\Sym_n$.
Suppose that $q\neq 1$ and $\HH_{\ell,n}(q,\bQ)$ is semisimple. Let $\{\mff_{\s\t}|\s,\t\in\Std(\blam),\blam\in\P_n\}$ be the seminormal basis of $\HH_{\ell,n}(q,\bQ)$ corresponding to the cellular basis $\{\fm_{\s\t}|\s,\t\in\Std(\blam),\blam\in\P_n\}$. Replacing the ``trivial representation'' of $\HH_q(\Sym_\blam)$ in the construction of $\fm_{\s\t}$ by the ``sign representation'' of $\HH_q(\Sym_\blam)$, one can also get a second cellular basis $\{\fn_{\s\t}\}$, called the dual cellular basis, of $\HH_{\ell,n}(q,\bQ)$. Starting from the dual cellular basis $\{\fn_{\s\t}|\s,\t\in\Std(\blam),\blam\in\P_n\}$ of $\HH_{\ell,n}(q,\bQ)$, we can construct a dual seminormal basis $\{\mfg_{\s\t}|\s,\t\in\Std(\blam),\blam\in\P_n\}$.  We refer the readers to Section 2 and Section 3 for unexplained notations here. The following theorem is the first main result of this paper.

\begin{thm}\label{mainthm1}
Let	$\blam\in\P_n$ and $\s,\t\in\Std(\blam)$. Suppose $q\neq 1$ and $\HH_{\ell,n}(q,\bQ)$ is semisimple. Then
	$$\alpha_{\s\t}:=\mfg_{\s\t}/\mff_{\s\t}=(-q)^{-\ell(d(\s'))-\ell(d(\t'))}\frac{\gamma_{\t_{\blam}}\gamma_{\t^{\blam'}}'}{\gamma_{\s}\gamma_{\t}}=
(-q)^{\ell(d(\s'))+\ell(d(\t'))}\frac{\gamma_{\s'}'\gamma_{\t'}'}{\gamma_{\t_{\blam}}\gamma_{\t^{\blam'}}'}, $$
where for each $\u\in\Std(\blam)$, $\gamma_\u$ is the $\gamma$-coefficient defined in Definition \ref{gamma1}, $\gamma'_\u$ is defined as in Definition \ref{prime}. \end{thm}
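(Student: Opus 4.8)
The plan is to reduce the problem to computing $\alpha_{\s\t}^{2}$, then to extract the square root as a genuine rational function by means of a single identity among the $\gamma$- and $\gamma'$-coefficients, and finally to pin down the sign.

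\textbf{Step 1: passage to $\alpha_{\s\t}^{2}$.} Since the Jucys--Murphy operators $\mL_{1},\dots,\mL_{n}$ are fixed by the anti-automorphism $*$ and the seminormal idempotent $F_{\u}$ attached to a standard tableau $\u$ depends only on the residue sequence of $\u$, the same idempotents $F_{\u}$ govern $\{\mff_{\s\t}\}$ and $\{\mfg_{\s\t}\}$; hence $\mff_{\s\t},\mfg_{\s\t}\in F_{\s}\HH_{\ell,n}(q,\bQ)F_{\t}$, a one-dimensional space, which is exactly why $\alpha_{\s\t}\in K^{\times}$ makes sense. Writing $\gamma^{\dagger}_{\u}$ for the $\gamma$-coefficient of the dual seminormal basis, so $\mfg_{\s\t}\mfg_{\u\v}=\delta_{\t\u}\gamma^{\dagger}_{\t}\mfg_{\s\v}$, we get $\mfg_{\s\s}/\gamma^{\dagger}_{\s}=F_{\s}=\mff_{\s\s}/\gamma_{\s}$, so $\alpha_{\s\s}=\gamma^{\dagger}_{\s}/\gamma_{\s}$. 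Multiplying $\mfg_{\s\t}\mfg_{\t\s}=\gamma^{\dagger}_{\t}\mfg_{\s\s}$ out via $\mff_{\s\t}\mff_{\t\s}=\gamma_{\t}\mff_{\s\s}$ gives $\alpha_{\s\t}\alpha_{\t\s}=\gamma^{\dagger}_{\s}\gamma^{\dagger}_{\t}/(\gamma_{\s}\gamma_{\t})$, while applying $*$ to $\mfg_{\s\t}=\alpha_{\s\t}\mff_{\s\t}$ (using $\mff_{\s\t}^{*}=\mff_{\t\s}$ and $\mfg_{\s\t}^{*}=\mfg_{\t\s}$, which follow from cellularity and uniqueness) gives $\alpha_{\s\t}=\alpha_{\t\s}$. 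Therefore
$$\alpha_{\s\t}^{2}=\frac{\gamma^{\dagger}_{\s}\gamma^{\dagger}_{\t}}{\gamma_{\s}\gamma_{\t}}.$$

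\textbf{Step 2: identifying $\gamma^{\dagger}$.} Next I would show $\gamma^{\dagger}_{\s}=\gamma'_{\s'}$, where $\s'\in\Std(\blam')$ is the conjugate of $\s$ and $\gamma'$ is as in Definition~\ref{prime}. This rests on the explicit link between $\{\fm_{\s\t}\}$ (built from the ``trivial'' representation of $\HH_{q}(\Sym_{\blam})$) and the dual cellular basis $\{\fn_{\s\t}\}$ (built from the ``sign'' representation), together with the conjugation $\blam\mapsto\blam'$: passing to cell modules and comparing their Gram matrices, whose products of diagonal entries in the respective seminormal bases are $\prod_{\u}\gamma_{\u}$ and $\prod_{\u}\gamma^{\dagger}_{\u}$, and using that the change to the seminormal basis is unitriangular for dominance, forces the equality. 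With this, $\alpha_{\s\t}^{2}=\gamma'_{\s'}\gamma'_{\t'}/(\gamma_{\s}\gamma_{\t})$ --- exactly the quantity whose rationality of square root is raised in \cite[Remark 3.6]{Ma}.

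\textbf{Step 3: the key identity.} The crux is to prove
$$\gamma_{\s}\,(-q)^{2\ell(d(\s'))}\,\gamma'_{\s'}=\gamma_{\t_{\blam}}\,\gamma'_{\t^{\blam'}}\qquad\text{for all }\s\in\Std(\blam),$$
i.e.\ that the left-hand side is independent of $\s$; its value at $\s=\t_{\blam}$ is the stated one since $(\t_{\blam})'=\t^{\blam'}$ and $d(\t^{\blam'})=1$. I would prove this by induction on the dominance order on $\Std(\blam)$: when $\s$ is replaced by $\s s_{r}$ with $\s s_{r}\gdom\s$ for a simple transposition $s_{r}$, the ratios $\gamma_{\s}/\gamma_{\s s_{r}}$ and $\gamma'_{\s'}/\gamma'_{(\s s_{r})'}$ are controlled by the explicit axial-distance factors of Definitions~\ref{gamma1} and~\ref{prime}, while $\ell(d((\s s_{r})'))-\ell(d(\s'))=\pm1$, and the three contributions cancel. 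Granting the identity, substituting $\gamma'_{\s'}=\gamma_{\t_{\blam}}\gamma'_{\t^{\blam'}}(-q)^{-2\ell(d(\s'))}/\gamma_{\s}$ (and likewise for $\t$) into $\alpha_{\s\t}^{2}$ yields
$$\alpha_{\s\t}^{2}=\left((-q)^{-\ell(d(\s'))-\ell(d(\t'))}\,\frac{\gamma_{\t_{\blam}}\gamma'_{\t^{\blam'}}}{\gamma_{\s}\gamma_{\t}}\right)^{2},$$
so $\alpha_{\s\t}$ equals $\pm$ the first displayed expression of Theorem~\ref{mainthm1} --- in particular a rational function, answering Mathas's question --- and substituting the identity the other way turns this into the second displayed expression up to the same sign.

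\textbf{Step 4: the sign.} Writing $\alpha_{\s\t}=\eps_{\s\t}(-q)^{-\ell(d(\s'))-\ell(d(\t'))}\gamma_{\t_{\blam}}\gamma'_{\t^{\blam'}}/(\gamma_{\s}\gamma_{\t})$ with $\eps_{\s\t}\in\{\pm1\}$, the relation $\mfg_{\s\t}\mfg_{\t\u}=\gamma^{\dagger}_{\t}\mfg_{\s\u}$ combined with the identity of Step~3 shows $\eps$ is multiplicative, $\eps_{\s\u}=\eps_{\s\t}\eps_{\t\u}$, and symmetric with $\eps_{\s\s}=1$; hence $\eps_{\s\t}=\chi(\s)\chi(\t)$ for some $\chi\colon\Std(\blam)\to\{\pm1\}$, and it suffices to verify $\eps_{\s\t}=1$ in one case with $\s\neq\t$, say $\s=\t^{\blam}$, $\t=\t_{\blam}$, by expanding $\mfg_{\t^{\blam}\t_{\blam}}$ in the cellular basis far enough to read off its $\mff_{\t^{\blam}\t_{\blam}}$-coefficient. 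The two places I expect real work are Step~3 --- establishing the identity with the \emph{correct} power $(-q)^{2\ell(d(\s'))}$, where the special behaviour of $T_{0}$ in the genuinely cyclotomic case $\ell>1$ enters and obstructs the clean ``$T_{i}\mapsto-qT_{i}^{-1}$'' automorphism argument available for $\ell=1$ --- and Step~4, fixing the sign, which is precisely the delicate point behind \cite[Remark 3.6]{Ma}.
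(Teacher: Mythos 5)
Your Steps 1--3 are essentially sound and your key identity in Step 3 is exactly the paper's Lemma \ref{gam'tgamt'} (after replacing $\t$ by $\s'$ and $\blam$ by $\blam'$, and noting $(-q)^{2\ell(d(\s'))}=q^{2\ell(d(\s'))}$); the inductive cancellation you describe is the computation $\bigl(\gamma_{\t_k}/\gamma_{\t_{k-1}}\bigr)'=q^{-2}\,\gamma_{\t'_{k-1}}/\gamma_{\t'_k}$ carried out there. Step 2 is underspecified --- comparing \emph{products} of diagonal entries of Gram matrices cannot by itself separate the individual coefficients, and the clean route is simply to apply the involution $'$ of Definition \ref{prime} to the product rule $\mff_{\s'\t'}\mff_{\u'\v'}=\delta_{\t\u}\gamma_{\t'}\mff_{\s'\v'}$, using $\mfg_{\s\t}=\mff'_{\s'\t'}$, which is Corollary \ref{obobn2cor} --- but the intended conclusion $\gamma^{\dagger}_{\t}=\gamma'_{\t'}$ is correct.

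The genuine gap is Step 4. From $\eps_{\s\u}=\eps_{\s\t}\eps_{\t\u}$, symmetry and $\eps_{\s\s}=1$ you correctly get $\eps_{\s\t}=\chi(\s)\chi(\t)$ for some $\chi\colon\Std(\blam)\to\{\pm1\}$, but verifying $\eps_{\s\t}=1$ for a \emph{single} pair $\s\neq\t$ only shows $\chi(\s)=\chi(\t)$ for that pair; it does not force $\chi$ to be constant on $\Std(\blam)$ when $|\Std(\blam)|\geq 3$. To close the argument you would need $\eps_{\s\t_0}=1$ for \emph{every} $\s$ and some fixed $\t_0$, and the proposed method (expanding $\mfg_{\t^{\blam}\t_{\blam}}$ in the cellular basis) is neither carried out nor obviously easier than the original problem. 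The paper sidesteps the sign issue entirely: since $\mfg_{\s\t}=\mff'_{\s'\t'}$ exactly (no ambiguity), writing $\mff_{\s'\t'}=\Phi_{\s'}^{*}\mff_{\t^{\blam'}\t^{\blam'}}\Phi_{\t'}$ via Lemma \ref{M5prop} and tracking how the transition elements $\Phi$ transform under $'$ (Lemmas \ref{phiprime} and \ref{scalaralt}, which produce the factor $(-q)^{-\ell(d(\s'))-\ell(d(\t'))}$ with its sign) computes $\alpha_{\s\t}$ directly, with Lemma \ref{gam'tgamt'} then giving the second displayed form. If you want to salvage your route, you should replace the ``one case'' claim by an induction on adjacent transpositions showing $\eps_{\s\t}=\eps_{\s s_r,\t}$ whenever both are standard --- which, once written out, amounts to re-deriving the paper's Lemma \ref{scalaralt}.
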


There is a natural algebra embedding $\iota: \HH_{\ell,n-1}(q,\bQ)\hookrightarrow\HH_{\ell,n}(q,\bQ)$, which is defined on generators by $\iota(T_i):=T_i$ for $0\leq i<n-1$. In order to avoid the confusion between the notations for $\HH_{\ell,n-1}(q,\bQ)$ and $\HH_{\ell,n}(q,\bQ)$. We add a superscript $(n)$ to indicate that it is the notation for $\HH_{\ell,n}(q,\bQ)$. Let $\bmu\in\P_{n-1}$ and $\s,\t\in\Std(\blam)$. Under the embedding $\iota$, we have \begin{equation}\label{beta}
\mff_{\s\t}^{(n-1)}=\sum_{\blam\in\P_n}\sum_{\u,\v\in\Std(\blam)}\beta_{\u\v}^{\s\t}\mff_{\u\v}^{(n)},
\end{equation}
where $\beta_{\u\v}^{\s\t}\in K$ for each pair $(\u,\v)$. The following theorem is the second main result of this paper.

\begin{thm}\label{mainthm2}
	Let $\bmu\in\P_{n-1}, \blam\in\P_n$, and $\s,\t\in\Std(\bmu)$, $\u,\v\in\Std(\blam)$. Suppose $q\neq 1$ and $\HH_{\ell,n}(q,\bQ)$ is semisimple. Then $\beta_{\u\v}^{\s\t}\neq 0$ if and only if $\u\downarrow_{n-1}=\s$ and $\v\downarrow_{n-1}=\t$. In that case we have $$\beta_{\u\v}^{\s\t}=\frac{\gamma_{\s}^{(n-1)}}{\gamma_{\u}^{(n)}}=\frac{\gamma_{\t}^{(n-1)}}{\gamma_{\v}^{(n)}}.$$
\end{thm}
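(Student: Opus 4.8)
The plan is to prove the statement in three stages: determine which coefficients $\beta_{\u\v}^{\s\t}$ can be nonzero, settle the diagonal case $\s=\t$ directly, and then bootstrap from the diagonal case to the general one using the seminormal form of the generators. For the first stage, note that $\iota(\mL_m)=\mL_m$ for $1\le m\le n-1$ (directly from the formula for the Jucys--Murphy operators), and that each $\mff_{\s\t}$ is a simultaneous eigenvector, $\mL_m\mff_{\s\t}^{(n-1)}=c_m(\s)\mff_{\s\t}^{(n-1)}$ and $\mff_{\s\t}^{(n-1)}\mL_m=c_m(\t)\mff_{\s\t}^{(n-1)}$ for the appropriate contents $c_m$, and likewise in $\HH_{\ell,n}$. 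Applying $\iota$ to these relations and comparing coefficients against the basis $\{\mff_{\u\v}^{(n)}\}$ shows that any $\u,\v$ with $\beta_{\u\v}^{\s\t}\ne0$ satisfies $c_m(\u)=c_m(\s)$ and $c_m(\v)=c_m(\t)$ for all $m\le n-1$; since $\HH_{\ell,n}$ (hence $\HH_{\ell,n-1}$) is semisimple, a standard tableau is determined by its sequence of contents, so $\u\downarrow_{n-1}=\s$ and $\v\downarrow_{n-1}=\t$. Thus, for $\u,\v\in\Std(\blam)$ to contribute, $\blam$ must be obtained from $\bmu$ by adding a single node $A$, and then $\u=\s_A$, $\v=\t_A$, where $\s_A$ denotes $\s$ with $n$ adjoined in $A$; so \eqref{beta} takes the form $\iota(\mff_{\s\t}^{(n-1)})=\sum_A\beta_A^{\s\t}\mff_{\s_A\t_A}^{(n)}$, summed over addable nodes $A$ of $\bmu$.

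For the diagonal case, $F_\s^{(n-1)}:=\frac1{\gamma_\s^{(n-1)}}\mff_{\s\s}^{(n-1)}$ is the seminormal idempotent, and the first stage gives $\iota(F_\s^{(n-1)})=\sum_A c_A\mff_{\s_A\s_A}^{(n)}$ with $c_A=\frac1{\gamma_\s^{(n-1)}}\beta_A^{\s\s}$. As this element is idempotent, the relations $\mff_{\s_A\s_A}^{(n)}\mff_{\s_B\s_B}^{(n)}=\delta_{AB}\gamma_{\s_A}^{(n)}\mff_{\s_A\s_A}^{(n)}$ force $c_A\in\{0,\frac1{\gamma_{\s_A}^{(n)}}\}$; and since $F_\s^{(n-1)}$ acts nontrivially on the seminormal module $V_\bmu$, which by the branching rule occurs in the restriction of $V_\blam$ for every $\blam$ obtained from $\bmu$ by adding a box, the image $\iota(F_\s^{(n-1)})$ has a nonzero component in each such block, so all $c_A$ are nonzero. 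Hence $\iota(F_\s^{(n-1)})=\sum_A F_{\s_A}^{(n)}$ and $\beta_A^{\s\s}=\gamma_\s^{(n-1)}/\gamma_{\s_A}^{(n)}$, which is the claimed formula when $\s=\t$.

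For general $\s,\t$ I would pass to the matrix units $E_{\s\t}:=\frac1{\gamma_\t}\mff_{\s\t}$ in both algebras, so that $\iota(E_{\s\t}^{(n-1)})=\sum_A\xi_A^{\s\t}E_{\s_A\t_A}^{(n)}$ with $\xi_A^{\s\t}=\frac{\gamma_{\t_A}^{(n)}}{\gamma_\t^{(n-1)}}\beta_A^{\s\t}$. The seminormal form of $T_i$ (see \cite{Ma}) expresses, for $1\le i\le n-2$, the product $T_iE_{\s\t}^{(n-1)}$ as a combination of $E_{\s\t}^{(n-1)}$ and $E_{\s s_i,\t}^{(n-1)}$ whose coefficients are rational functions of the two contents $c_i(\s),c_{i+1}(\s)$ alone, the $E_{\s s_i,\t}^{(n-1)}$-coefficient $\delta_i(\s)$ being nonzero precisely when $\s s_i\in\Std(\bmu)$; the same identity holds in $\HH_{\ell,n}$. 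The key observation is that for $i\le n-2$ the entry $n$ lies in the same node of $\s$ and of $\s_A$, so $c_i(\s_A)=c_i(\s)$ and $c_{i+1}(\s_A)=c_{i+1}(\s)$; consequently all these coefficients agree whether computed in $\HH_{\ell,n-1}$ or $\HH_{\ell,n}$. Applying $\iota$ to the $\HH_{\ell,n-1}$ identity, expanding via $\iota(E^{(n-1)})=\sum\xi E^{(n)}$ and acting by $T_i$ term by term, then comparing the coefficient of $E_{\s_A s_i,\t_A}^{(n)}$ (noting $\s_A s_i=(\s s_i)_A$) yields $\delta_i(\s)\xi_A^{\s s_i,\t}=\delta_i(\s)\xi_A^{\s\t}$, hence $\xi_A^{\s s_i,\t}=\xi_A^{\s\t}$ whenever $\s s_i\in\Std(\bmu)$; the analogous computation with right multiplication by $T_i$ gives $\xi_A^{\s,\t s_i}=\xi_A^{\s\t}$. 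Since the graph on $\Std(\bmu)$ with edges $\{\s,\s s_i\}$, $1\le i\le n-2$, is connected, $\xi_A^{\s\t}$ is independent of $\s$ and $\t$; evaluating at $\s=\t=\t^{\bmu}$ and using the diagonal case gives $\xi_A^{\s\t}=1$, i.e.\ $\beta_A^{\s\t}=\gamma_\t^{(n-1)}/\gamma_{\t_A}^{(n)}$. Finally, $*$ commutes with $\iota$ (it fixes every $T_i$) and satisfies $\mff_{\s\t}^{*}=\mff_{\t\s}$, so $\beta_A^{\t\s}=\beta_A^{\s\t}$; applying the formula just proved to $\beta_A^{\t\s}$ gives $\beta_A^{\s\t}=\gamma_\s^{(n-1)}/\gamma_{\s_A}^{(n)}$ as well, which completes the proof.

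The hard part is the third stage, and in particular the observation that the seminormal-form coefficients of $T_i$ with $i\le n-2$ are literally unchanged under extending a tableau from $\P_{n-1}$ to $\P_n$ (because only $c_i,c_{i+1}$ enter, and these are unaffected by adjoining $n$): this is exactly what allows the single normalization constant obtained for free in the diagonal case to spread, via the connectivity of $\Std(\bmu)$ under adjacent transpositions, to all pairs $(\s,\t)$. The first two stages are routine bookkeeping with Jucys--Murphy eigenvalues and seminormal idempotents. The degenerate version of the theorem is obtained by exactly the same scheme, using the degenerate Jucys--Murphy operators and the corresponding seminormal forms of \cite{AMR} and \cite{Zh}.
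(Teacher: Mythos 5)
Your argument is correct. Stages one and two reproduce the paper's Lemma \ref{fst3} almost verbatim (the one difference: to see that every addable node actually contributes to $\iota(F_\s^{(n-1)})$ you invoke the branching rule for restriction of the simple modules, whereas the paper compares the two resolutions of the identity $\sum_\s F^{(n-1)}_\s = 1 = \sum_\u F^{(n)}_\u$, which is self-contained). Where you genuinely differ is the third stage. The paper transports the diagonal expansion at $\t^{\bmu}$ to a general pair $(\s,\t)$ in one shot, via $\mff^{(n-1)}_{\s\t}=\Phi_\s^*\mff^{(n-1)}_{\t^\bmu\t^\bmu}\Phi_\t$ and $\Phi_\s^*\mff^{(n)}_{\a\a}\Phi_\t=\mff^{(n)}_{\a d(\s)\,\a d(\t)}$ (Lemma \ref{M5prop}), and then needs a separate recursion computation (Lemma \ref{gammann+1}) to rewrite the resulting coefficient $\gamma^{(n-1)}_{\t^\bmu}/\gamma^{(n)}_{\a}$ in the stated form. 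You instead pass to matrix units, note that the seminormal coefficients of $T_i$ for $i\le n-2$ involve only the contents of $i$ and $i+1$ (and the dominance comparison of $\s$ with $\s s_i$, which is likewise unchanged) and hence agree before and after adjoining $n$, and spread the normalized value $1$ from $(\t^\bmu,\t^\bmu)$ to all $(\s,\t)$ by connectivity of $\Std(\bmu)$ under adjacent transpositions; the second expression $\gamma^{(n-1)}_{\s}/\gamma^{(n)}_{\u}$ then comes for free from the $*$-symmetry $\beta_{\u\v}^{\s\t}=\beta_{\v\u}^{\t\s}$ rather than from an explicit $\gamma$-identity. The two mechanisms are close cousins --- the $\Phi_\t$ are products of the very factors $T_i-A_i(\cdot)$ whose coefficients you track --- but your route bypasses Lemma \ref{gammann+1} entirely and makes the ``only the contents at $i,i+1$ matter'' phenomenon explicit, at the price of the connectivity bookkeeping and of importing the branching rule as an external input.
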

If we set $\ell:=1$ and $Q_1:=1$, then the above two main results Theorems \ref{mainthm1} and \ref{mainthm2} give the corresponding results for the classical semisimple Iwahori-Hecke algebra $\HH_q(\Sym_n)$ associated to the symmetric group $\Sym_n$ with Hecke parameter $q\neq 1$.

The degenerate case is parallel to the non-degenerate case with slight modification. Let $\bu=(u_1,\cdots,u_\ell)$, where $u_1,\cdots,u_\ell \in R$. The degenerate cyclotomic Hecke algebra $H_{\ell,n}(\bu)$ of type $G(\ell,1,n)$ is the unital associative $R$-algebra with generators $s_{1},\cdots,s_{n-1},L_1,\cdots,L_n$ and the following defining relations:
$$\begin{aligned}
	&(L_1-u_1)\cdots(L_1-u_\ell)=0;\\
	&s_{i}^{2}=1,\quad \forall\,1\leq i\leq n-1;\\
	&s_{i}s_{j}=s_{j}s_{i},\,\,\forall\,1\leq i<j-1<n-1,\\
	&s_is_{i+1}s_i=s_{i+1}s_is_{i+1},\,\,\forall\,1\leq i<n-1,\\
&L_iL_k=L_kL_i,\,\, s_iL_l=L_ls_i,\,\,1\leq i<n, 1\leq k,l\leq n, l\neq i,i+1,\\
&L_{i+1}=s_iL_is_i+s_i,\,\,\, 1\leq i<n .
\end{aligned}
$$
The elements $L_1,\cdots,L_n$ are called the Jucys-Murphy elements of the degenerate cyclotomic Hecke algebra $H_{\ell,n}(\bu)$.

As in the non-degenerate case, we have a cellular basis $\{{\rm m}_{\s\t}|\s,\t\in\Std(\blam),\blam\in\P_n\}$ as well as a dual cellular basis $\{{\rm n}_{\s\t}|\s,\t\in\Std(\blam),\blam\in\P_n\}$ of $H_{\ell,n}(\bu)$. Suppose that $H_{\ell,n}(\bu)$ is semisimple. Let $\{{\rm f}_{\s\t}|\s,\t\in\Std(\blam),\blam\in\P_n\}$ be the seminormal basis of $H_{\ell,n}(\bu)$ corresponding to the cellular basis $\{{\rm m}_{\s\t}|\s,\t\in\Std(\blam),\blam\in\P_n\}$. Let $\{{\rm g}_{\s\t}|\s,\t\in\Std(\blam),\blam\in\P_n\}$ be the dual seminormal basis of $H_{\ell,n}(\bu)$ corresponding to the dual cellular basis $\{{\rm n}_{\s\t}|\s,\t\in\Std(\blam),\blam\in\P_n\}$. Then we have that $a_{\s\t}:={\rm g}_{\s\t}/{\rm f}_{\s\t}\in K^\times$ for any $\s,\t\in\Std(\blam), \blam\in\P_n$. The following two theorems are the analogues of Theorems \ref{mainthm1}, \ref{mainthm2} for the degenerate case.

\begin{thm}\label{mainthm1b}
	Let $\blam\in\P_n$ and $\s,\t\in\Std(\blam)$. Suppose $H_{\ell,n}(\bu)$ is semisimple. Then
	$$ a_{\s\t}:={\rm g}_{\s\t}/{\rm f}_{\s\t}=(-1)^{-\ell(d(\s'))-\ell(d(\t'))}\frac{r_{\t_{\blam}}r_{\t^{\blam'}}'}{r_{\s}r_{\t}}=
(-1)^{\ell(d(\s'))+\ell(d(\t'))}\frac{r_{\s'}'r_{\t'}'}{r_{\t_{\blam}}r_{\t^{\blam'}}'}, $$
where for each $\u\in\Std(\blam)$, $r_\u$ is the $r$-coefficient defined in Definition \ref{gamma2}, $r'_\u$ is defined as in Definition \ref{prime2}. \end{thm}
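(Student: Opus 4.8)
The plan is to carry out the proof of Theorem~\ref{mainthm1} in the degenerate setting, transferring each ingredient to $H_{\ell,n}(\bu)$: the Jucys--Murphy operators $\mL_m$ are replaced by the Jucys--Murphy elements $L_m$, the multiplicative residues by the additive contents, the $\gamma$-coefficients $\gamma_\u,\gamma'_\u$ by the $r$-coefficients $r_\u,r'_\u$ of Definitions~\ref{gamma2} and~\ref{prime2}, and the scalar $-q$ by $-1$. I would first record the two facts about the seminormal bases that the non-degenerate proof uses. (i) For $\s,\t\in\Std(\blam)$, the elements $\rmf_{\s\t}$ and $\rmg_{\s\t}$ span the same one-dimensional subspace of the semisimple algebra $H_{\ell,n}(\bu)$, namely the common eigenspace on which $L_1,\dots,L_n$ act on the left through the contents read from $\s$ and on the right through the contents read from $\t$; hence they are proportional and $a_{\s\t}$ is just the ratio of their normalisations. (ii) $\rmf_{\s\t}$ differs from the cellular basis element $\rmm_{\s\t}$ by a linear combination of cellular basis elements at strictly larger pairs in the dominance order, while $\rmg_{\s\t}$ differs from the dual cellular basis element $\rmn_{\s\t}$ by a combination of dual cellular basis elements at strictly larger pairs.

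The technical core is to compute $a_{\s\t}$ from the normalisations in (ii), that is, to understand the change of basis from $\{\rmm_{\s\t}\}$ to $\{\rmn_{\s\t}\}$ expressed through the seminormal bases. In the non-degenerate case this is controlled by the interplay of the trivial and sign representations of the Hecke algebra $\HH_q(\Sym_\blam)$ together with the conjugation $\blam\mapsto\blam'$ of multipartitions; in the degenerate case it is controlled by the analogous interplay for the group algebra $K\Sym_\blam$, and this is precisely where powers of $-q$ are replaced by powers of $-1$, since the sign character of $\Sym_\blam$ contributes $(-1)^{\ell(w)}$ on the basis element indexed by $w$ where the trivial character contributes $1$. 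One concrete route is to transport the seminormal data through the isomorphism $\sigma\colon H_{\ell,n}(\bu)\xrightarrow{\ \sim\ }H_{\ell,n}(-\bu)$ determined by $s_i\mapsto -s_i$ and $L_m\mapsto -L_m$; this $\sigma$ sends the symmetriser of $K\Sym_\blam$ to the antisymmetriser, negates all contents, and so interchanges $\rmf$-type and $\rmg$-type seminormal elements while replacing tableaux by their conjugates. Comparing the image with the normalisation of $\rmf_{\s\t}$ and pushing the resulting $r$-coefficients for $-\bu$ back to $r$-coefficients for $\bu$ should produce the first displayed formula. The second displayed formula then follows from the degenerate analogue of the $\gamma$-coefficient identity used for Theorem~\ref{mainthm1}, namely that $r_\u\,r'_{\u'}$ is independent of $\u\in\Std(\blam)$ and equal to $r_{\t_\blam}\,r'_{\t^{\blam'}}$ (with no surviving sign ambiguity, since $(-1)^{2\ell}=1$); this is the degenerate form of the rationality phenomenon mentioned in the abstract.

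To pass from the extreme tableaux $\t^\blam,\t_\blam$ to arbitrary $\s,\t$, I would induct along reduced expressions for $d(\s)$ and $d(\t)$ using the degenerate seminormal recursions: applying a generator $s_i$ to a seminormal basis element yields a combination of at most two seminormal basis elements whose coefficients are explicit rational functions of the additive contents, so tracking these coefficients through the induction multiplies the relevant $r$-coefficient ratio by a controlled factor, while the bijection $\s\mapsto\s'$ converts $\ell(d(\s))$-counting into $\ell(d(\s'))$-counting and produces the exponents of $-1$. Since these additive-content recursions have the same combinatorial shape as their multiplicative analogues, this step is formally identical to the corresponding step for Theorem~\ref{mainthm1}.

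I expect the main obstacle to lie in the bookkeeping of the second paragraph: one must verify carefully that the trivial$\leftrightarrow$sign exchange (for instance, the effect of $\sigma$) interacts correctly with the monomials in the $L_i$ appearing in the Dipper--James--Mathas-type cellular basis of $H_{\ell,n}(\bu)$, and that the scalar it produces is exactly the claimed $r$-coefficient expression; because the contents are additive, several intermediate identities change shape --- in particular one must use $L_{i+1}=s_iL_is_i+s_i$ where the multiplicative version was used before, and one must control how the $r$-coefficients of $H_{\ell,n}(-\bu)$ relate to those of $H_{\ell,n}(\bu)$. A secondary point is to confirm the identity $r_\u r'_{\u'}=r_{\t_\blam}r'_{\t^{\blam'}}$ directly from Definitions~\ref{gamma2} and~\ref{prime2}. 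Granting these verifications, the rest of the argument is the same linear algebra as for Theorem~\ref{mainthm1}.
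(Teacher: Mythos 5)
Your proposal is correct and takes essentially the same route as the paper: the paper proves Theorem~\ref{mainthm1b} by transporting the proof of Theorem~\ref{mainthm1} through the ring involution $s_i\mapsto -s_i$, $L_m\mapsto -L_m$, $u_j\mapsto -u_{\ell-j+1}$ of Definition~\ref{prime2} (your $\sigma$, which must also reverse the order of the cyclotomic parameters so that $\rmf'_{\s\t}=\rmg_{\s'\t'}$), together with the invertible elements $\phi_\t$, the induction along reduced expressions via the seminormal recursion (Lemma~\ref{scalaralt2}), and the identity $r_{\t'}r'_{\t}=r_{\t_{\blam'}}r'_{\t^{\blam}}$ of Lemma~\ref{gam'tgamt'2}, which is exactly your $r_\u r'_{\u'}=r_{\t_\blam}r'_{\t^{\blam'}}$ after relabelling $\blam\leftrightarrow\blam'$. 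All the ingredients you list are the ones the paper uses, so the plan is sound as stated.
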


As in the non-degenerate case, we also have an algebra embedding
$\iota_1: H_{\ell,n-1}(\bu)\hookrightarrow H_{\ell,n}(\bu)$, which is defined on generators by $\iota_1(s_i):=s_i$, $\iota_1(L_k)=L_k$, for $1\leq i<n-1, 1\leq k\leq n-1$. In order to avoid the confusion between the notations for $H_{\ell,n-1}(\bu)$ and $H_{\ell,n}(\bu)$. We add a superscript $(n)$ to indicate that it is the notation for $H_{\ell,n}(\bu)$. Let $\bmu\in\P_{n-1}$ and $\s,\t\in\Std(\blam)$. Under the embedding $\iota_1$, we have \begin{equation}\label{betab}
{\rm f}_{\s\t}^{(n-1)}=\sum_{\blam\in\P_n}\sum_{\u,\v\in\Std(\blam)}b_{\u\v}^{\s\t}{\rm f}_{\u\v}^{(n)},
\end{equation}
where $b_{\u\v}^{\s\t}\in K$ for each pair $(\u,\v)$.

\begin{thm}\label{mainthm2b}
	Let $\bmu\in\P_{n-1}, \blam\in\P_n$, and $\s,\t\in\Std(\bmu)$, $\u,\v\in\Std(\blam)$. Suppose $H_{\ell,n}(\bu)$ is semisimple. Then $b_{\u\v}^{\s\t}\neq 0$ if and only if $\u\downarrow_{n-1}=\s$ and $\v\downarrow_{n-1}=\t$. In that case we have $$b_{\u\v}^{\s\t}=\frac{r_{\s}^{(n-1)}}{r_{\u}^{(n)}}=\frac{r_{\t}^{(n-1)}}{r_{\v}^{(n)}}.$$
\end{thm}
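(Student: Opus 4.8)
The plan is to run the argument in parallel with the proof of Theorem~\ref{mainthm2}, replacing the Jucys-Murphy operators of $\HH_{\ell,n}(q,\bQ)$ by the elements $L_1,\dots,L_n$ of $H_{\ell,n}(\bu)$ and the $\gamma$-coefficients by the $r$-coefficients. For a standard tableau $\u$ write $F_\u^{(n)}$ for the associated seminormal idempotent of $H_{\ell,n}(\bu)$, so that ${\rm f}_{\u\u}^{(n)}=r_\u^{(n)}F_\u^{(n)}$, the $F_\u^{(n)}$ form a complete set of pairwise orthogonal idempotents, and ${\rm f}_{\u\v}^{(n)}=F_\u^{(n)}{\rm f}_{\u\v}^{(n)}F_\v^{(n)}$; use the analogous notation for $H_{\ell,n-1}(\bu)$. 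Since $\iota_1$ identifies $L_1,\dots,L_{n-1}$ of $H_{\ell,n-1}(\bu)$ with the first $n-1$ Jucys-Murphy elements of $H_{\ell,n}(\bu)$, and $F_\s^{(n-1)}$ is the polynomial in $L_1,\dots,L_{n-1}$ projecting onto the joint eigenspace determined by the content sequence of $\s$, one obtains the branching identity
\[
F_\s^{(n-1)}=\sum_{\substack{\blam\in\P_n,\ \u\in\Std(\blam)\\ \u\downarrow_{n-1}=\s}}F_\u^{(n)}
\qquad(\bmu\in\P_{n-1},\ \s\in\Std(\bmu)),
\]
where the $\u$ appearing on the right are exactly the tableaux obtained from $\s$ by placing $n$ in an addable node $A$ of $\bmu$; denote this tableau by $\s^{+A}$.

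For the support statement I would multiply \eqref{betab} on the left by $F_\u^{(n)}$ and on the right by $F_\v^{(n)}$, obtaining $F_\u^{(n)}{\rm f}_{\s\t}^{(n-1)}F_\v^{(n)}=b_{\u\v}^{\s\t}{\rm f}_{\u\v}^{(n)}$. Writing ${\rm f}_{\s\t}^{(n-1)}=F_\s^{(n-1)}{\rm f}_{\s\t}^{(n-1)}F_\t^{(n-1)}$ and invoking the branching identity together with the orthogonality of the $F_\u^{(n)}$, the left-hand side vanishes unless $\u\downarrow_{n-1}=\s$ and $\v\downarrow_{n-1}=\t$; and in the latter case $\u$ and $\v$ have a common shape $\blam$ with $|\blam/\bmu|=1$, forcing $\u=\s^{+A}$ and $\v=\t^{+A}$ with $A=\blam/\bmu$. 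It then remains to prove that $b_{\s^{+A}\t^{+A}}^{\s\t}=r_\s^{(n-1)}/r_{\s^{+A}}^{(n)}$ for every addable node $A$ of $\bmu$ and all $\s,\t\in\Std(\bmu)$; since $r_\s^{(n-1)},r_{\s^{+A}}^{(n)}\in K^\times$ when $H_{\ell,n}(\bu)$ is semisimple, this also yields the converse implication.

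When $\s=\t$ the claim follows at once: combining ${\rm f}_{\s\s}^{(n-1)}=r_\s^{(n-1)}F_\s^{(n-1)}$, the branching identity, and ${\rm f}_{\s^{+A}\s^{+A}}^{(n)}=r_{\s^{+A}}^{(n)}F_{\s^{+A}}^{(n)}$ gives ${\rm f}_{\s\s}^{(n-1)}=\sum_A (r_\s^{(n-1)}/r_{\s^{+A}}^{(n)})\,{\rm f}_{\s^{+A}\s^{+A}}^{(n)}$. For general $\s,\t$ I would transport this identity along the generators $s_1,\dots,s_{n-2}$, which lie in both $H_{\ell,n-1}(\bu)$ and $H_{\ell,n}(\bu)$ via $\iota_1$. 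Suppose ${\rm f}_{\s\t}^{(n-1)}=\sum_A c_A\,{\rm f}_{\s^{+A}\t^{+A}}^{(n)}$ for some scalars $c_A$, and let $1\le i\le n-2$ be such that $\t s_i\in\Std(\bmu)$. The seminormal action of $s_i$ reads ${\rm f}_{\s\t}^{(n-1)}s_i=\alpha\,{\rm f}_{\s,\t s_i}^{(n-1)}+\beta\,{\rm f}_{\s\t}^{(n-1)}$ with $\alpha\neq 0$, where $\alpha,\beta$ are rational expressions in the contents of the boxes of $i$ and $i+1$ in $\t$; the same scalars govern the right action of $s_i$ on each ${\rm f}_{\s^{+A}\t^{+A}}^{(n)}$, since adjoining the node $A$ with entry $n$ does not change the contents of $1,\dots,n-1$, and $(\t^{+A})s_i=(\t s_i)^{+A}$. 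Equating the two expansions of ${\rm f}_{\s\t}^{(n-1)}s_i$ and cancelling the $\beta$-term gives ${\rm f}_{\s,\t s_i}^{(n-1)}=\sum_A c_A\,{\rm f}_{\s^{+A}(\t s_i)^{+A}}^{(n)}$, with the same $c_A$; the left-sided version moves $\s$ identically. Because $\Std(\bmu)$ is connected under $s_1,\dots,s_{n-2}$, any pair $(\s,\t)$ is linked to the diagonal pairs $(\s,\s)$ and $(\t,\t)$ by such moves, whence $c_A$ is independent of the pair and, by the diagonal computation, equals both $r_\s^{(n-1)}/r_{\s^{+A}}^{(n)}$ and $r_\t^{(n-1)}/r_{\t^{+A}}^{(n)}$. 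With $\u=\s^{+A}$, $\v=\t^{+A}$ this is the asserted formula.

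The main obstacle is the transport step: it needs the explicit seminormal action of the degenerate generators $s_i$ on $\{{\rm f}_{\s\t}\}$ and a careful verification that the scalars $\alpha,\beta$ depend only on the contents of the boxes containing $i$ and $i+1$, so that they do not feel the box carrying the entry $n$; the connectedness of $\Std(\bmu)$ under the elementary transpositions is classical. The idempotent steps are then formal, relying only on the branching rule $F_\s^{(n-1)}=\sum_A F_{\s^{+A}}^{(n)}$ and the normalisation ${\rm f}_{\s\s}=r_\s F_\s$.
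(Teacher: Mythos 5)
Your argument is correct and is essentially the paper's: the support claim via the commuting family $L_1,\dots,L_n$ and the orthogonal primitive idempotents, the diagonal case via the branching identity $F_\s^{(n-1)}=\sum_A F_{\s^{+A}}^{(n)}$ (this is Lemma~\ref{fst32}), and the off-diagonal case by propagating the expansion through the seminormal action of the $s_i$ (Lemma~\ref{seminorma2B}) — which is exactly what the paper's intertwiners $\phi_\s^*(\cdot)\phi_\t$ of Lemma~\ref{M5prop2} accomplish in one stroke starting from $\rmf_{\t^\bmu\t^\bmu}^{(n-1)}$, the key point in both versions being that $a_i(\t)$ and $b_i(\t)$ depend only on the contents of the boxes of $i$ and $i+1$ and hence are unchanged by adjoining the node carrying $n$. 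The only organisational difference is that by linking $(\s,\t)$ to both diagonal pairs you deduce $r_\s^{(n-1)}/r_{\s^{+A}}^{(n)}=r_\t^{(n-1)}/r_{\t^{+A}}^{(n)}$ from uniqueness of the expansion in the basis $\{\rmf_{\u\v}^{(n)}\}$, whereas the paper establishes this coefficient identity separately (Lemma~\ref{gammann+12}) from the recursion defining the $r$-coefficients.
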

If we set $\ell:=1$ and $u_1:=0$, then the above two main results Theorems \ref{mainthm1b} and \ref{mainthm2b} give the corresponding results for the semisimple symmetric group algebra $K[\Sym_n]$.\medskip

The content of the paper is organised as follows. In Section 2 we give some preliminary results on the structure and representation theory of the cyclotomic Hecke algebras $\HH_{\ell,n}$ of type $G(\ell,1,n)$. In particular, we shall recall the construction of cellular bases and seminormal bases of $\HH_{\ell,n}$. In Section 3 we first recall the construction of the dual cellular bases and the dual seminormal bases of $\HH_{\ell,n}(q,\bQ)$. Then we reveal some hidden relationship between various $\gamma$-coefficients in Lemma \ref{gam'tgamt'}. Combining this with the use of certain remarkable invertible elements $\Phi_s$ introduced in Mathas's work \cite{Ma}, we finally
give the proof of the main results Theorem \ref{mainthm1} and Theorem \ref{mainthm2}. In Section 4 we deal with the degenerate cyclotomic Hecke algebra $H_{\ell,n}(\bu)$. The argument is similar as the non-degenerate case. In particular, we give the proof of the main results Theorem \ref{mainthm1b} and Theorem \ref{mainthm2b}.

\bigskip\bigskip
\centerline{Acknowledgements}
\bigskip

The research was support by the National Natural Science Foundation of China (No. 12171029).
\bigskip

\section{Preliminary}

Let $\HH_{\ell,n}\in\{\HH_{\ell,n}(q,\bQ),H_{\ell,n}(\bu)\}$. Let $\Sym_n$ be the symmetric group on $\{1,2,\cdots,n\}$. For each $1\leq i<n$, we set $s_i:=(i,i+1)$. A word $w=s_{i_{1}}s_{i_{2}}\ldots s_{i_{k}}$ for $w\in \Sym_{n}$ is called a reduced expression of $w$ if $k$ is minimal; in this case we say $w$ has length $k$ and we write $\ell(w)=k$. Given a reduced expression $s_{i_{1}}\cdots s_{i_{k}}$ of $w\in \Sym_{n}$, we define $T_{w}=T_{i_{1}}\cdots T_{i_{k}}$, which is independent of the choice of the reduced expression of $w$ because the braid relations hold in $\HH_{\ell,n}(q,\bQ)$. Let ``$\ast$'' be the unique anti-involution of $\HH_{\ell,n}$ which fixes its defining generators.

Dipper, James and Mathas have shown in \cite{DJM:cyc} that the algebra $\HH_{\ell,n}$ is cellular in the sense of \cite{GL}. To recall the cellular structure given in  \cite{DJM:cyc}, we need some combinatorial notions and notations.
Let $a$ be a positive integer. A partition of $a$ is a weakly decreasing sequence $\lam=(\lam_{1},\lam_{2},\cdots)$ of non-negative integers such that $|\lam|:=\Sigma_{i\geq 1}\lam_{i}=a$.
Let $\lam=(\lam_{1},\lam_{2},\ldots)\vdash a$ be a partition of $a$. We define $\lam'=(\lam'_{1},\lam'_{2},\ldots)$, where for each $i$, $\lam'_{i}:=\#\{j|\lam_{j} \geq i\}$. Then $\lam'$ is again a partition of $a$ and is called the conjugate of $\lam$. A multipartition of $n$ is an $\ell$-tuple $\blam=(\lam^{(1)},\cdots,\lam^{(\ell)})$ of partitions such that $|\lam^{(1)}|+\cdots+|\lam^{(\ell)}|=n$. We define the Young diagram of $\blam$ to be $[\blam]:=\{(i,j,c)|1\leq j\leq \lam_{i}^{(c)}, 1\leq c\leq \ell\}$. A $\blam$-tableau $\t$ is a bijective map $\t: [\blam]\rightarrow\{1,2,\cdots,n\}$. If the $\blam$-tableau $\t$ satisfies that
$\t(i,j,l)\leq\t(a,b,l)$ for any $i\leq a$ and $j\leq b$ and $1\leq l\leq\ell$, then we say $\t$ is standard. We use $\Std(\blam)$ to denote the set of standard $\blam$-tableaux. If $\t\in\Std(\blam)$, then we set $\Shape(\t):=\blam$, and we can write $\t=(\t^{(1)},\cdots,\t^{(\ell)})$, where each $\t^{(i)}$ is a standard $\lam^{(i)}$-tableaux.

Let $\P_n$ be the set of multipartitions of $n$. For each multipartition $\blam=(\lam^{(1)},\cdots,\lam^{(\ell)})\in\P_n$, let $\Sym_\blam$ be the corresponding standard Young subgroup of $\Sym_n$. That is, $$\begin{aligned}
\Sym_\blam:&=\Sym_{\{1,\cdots,\lam_1^{(1)}\}}\times\Sym_{\{\lam_1^{(1)}+1,\cdots,\lam_2^{(1)}\}}\times\cdots\times\Sym_{\{|\lam^{(1)}|-\lam_{b_1}+1,\cdots,|\lam^{(1)}|\}}\times\cdots\\
&\qquad\times\Sym_{\{n-|\lam^{(\ell)}|+1,\cdots,n-|\lam^{(\ell)}|+\lam_{1}^{(\ell)}\}}\times\cdots\times \Sym_{\{n-|\lam^{(\ell)}_{b_\ell}|+1,\cdots,n\}},
\end{aligned}$$
where $b_i:=(\lam^{(i)'})_1$ for $i=1,2,\cdots,\ell$. For each $\blam\in\P_n$, we define $$
\blam':=\bigl(\lam^{(\ell)'},\cdots,\lam^{(1)'}\bigr) ,
$$
and call it the conjugate of $\blam$. For each $\t\in\Std(\blam)$, we define $$
\t'=\bigl(\t^{(\ell)'},\cdots,\t^{(1)'}\bigr) .
$$
Then $\t'\in\Std(\blam')$.

Let $\t^\blam$ be the initial standard $\blam$-tableaux in which the numbers $1,2,\cdots,n$ are entered in order first along the rows of $\t^{\lam^{(1)}}$ and then the rows of $\t^{\lam^{(2)}}$ and so on. We define $\t_{\blam}:=(\t^{\blam'})'$. In particular, $\t_\blam$ is the standard $\blam$-tableaux in which the numbers $1,2,\cdots,n$ are entered in order first along the columns of $\t^{\lam^{(\ell)}}$ and then the columns of $\t^{\lam^{(\ell-1)}}$ and so on. For each $\t\in\Std(\blam)$, let $d(\t)\in\Sym_n$ be the unique element in $\Sym_n$ such that $\t^\blam d(\t)=\t$, and we set $w_\blam:=d(\t_\blam)$.

For any $\blam,\bmu\in\P_n$, we write $\blam\unrhd\bmu$ if for all $1\leq s\leq\ell$ and all $i\geq 1$,
$$\sum_{t=1}^{s-1}|\lam^{(t)}|+\sum_{j=1}^{i}\lam^{(s)}_{j}\geq \sum_{t=1}^{s-1}|\mu^{(t)}|+\sum_{j=1}^{i}\mu^{(s)}_{j}.$$
Clearly $\P_n$ is a poset with respect to the partial order ``$\unrhd$''.

If $\blam\unrhd\bmu$ and $\blam\neq\bmu$, then we write $\blam\rhd\bmu$. Let $\s\in\Std(\blam), \t\in\Std(\bmu)$. We write $\s\unrhd\t$ if for any $1\leq k\leq n$, ${\rm{Shape}}(\s\!\downarrow_{\{1,2,\cdots,k\}})\unrhd{\rm{Shape}}(\t\!\downarrow_{\{1,2,\cdots,k\}})$. If $\s\unrhd\t$ and $\s\neq\t$ then we write $\s\rhd\t$.
Clearly, $\t^\blam\unrhd\s\unrhd\t_\blam$ for any $\s\in\Std(\blam)$.

\begin{dfn}\text{(\cite{Ma}, \cite{AMR})}\label{cellularbases1} Let $\blam\in\P_n$ and $\s,\t\in\Std(\blam)$. We define $$\begin{aligned}
\fm_{\s\t}:&=T_{d(\s)}^*\Bigl(\sum_{w\in\Sym_\blam}T_w\Bigr)\Bigl(\prod_{s=2}^{\ell}\prod_{k=1}^{|\lam^{(1)}|+\cdots+|\lam^{(s-1)}|}(\mL_k-Q_s)\Bigr)T_{d(\t)},\\
{\rm m}_{\s\t}:&=d(\s)^{-1}\Bigl(\sum_{w\in\Sym_\blam}w\Bigr)\Bigl(\prod_{s=2}^{\ell}\prod_{k=1}^{|\lam^{(1)}|+\cdots+|\lam^{(s-1)}|}(L_k-u_s)\Bigr)d(\t),
\end{aligned}
$$
\end{dfn}

\begin{thm}\text{(\cite{DJM}, \cite{AMR}, \cite{Zh})} With respect to the poset $(\P_n, \unrhd)$ and the anti-involution $\ast$, the set $\{\fm_{\s\t}|\s,\t\in\Std(\blam),\blam\in\P_n\}$ forms a cellular basis of $\HH_{\ell,n}(q,\bQ)$, while the set $\{{\rm m}_{\s\t}|\s,\t\in\Std(\blam),\blam\in\P_n\}$ forms a cellular basis of $H_{\ell,n}(\bu)$.
\end{thm}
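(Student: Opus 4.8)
The plan is to check that the datum $\bigl((\P_n,\unrhd),\ \{\Std(\blam)\}_{\blam\in\P_n},\ (\s,\t)\mapsto\fm_{\s\t},\ \ast\bigr)$ --- and, mutatis mutandis, the one with $\fm$ replaced by ${\rm m}$ --- satisfies the three axioms of a cell datum in the sense of Graham and Lehrer \cite{GL}: (i) $\fm_{\s\t}^{\ast}=\fm_{\t\s}$; (ii) $\{\fm_{\s\t}:\s,\t\in\Std(\blam),\ \blam\in\P_n\}$ is an $R$-basis of $\HH_{\ell,n}$; and (iii) for every $h\in\HH_{\ell,n}$ one has $h\,\fm_{\s\t}\equiv\sum_{\u\in\Std(\blam)}r_{\u\s}(h)\,\fm_{\u\t}\pmod{\HH_{\ell,n}^{\rhd\blam}}$, where $\HH_{\ell,n}^{\rhd\blam}$ is the $R$-span of the $\fm_{\u\v}$ with $\Shape(\u)=\Shape(\v)\rhd\blam$ and the scalars $r_{\u\s}(h)\in R$ are independent of $\t$. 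I will argue for $\HH_{\ell,n}(q,\bQ)$; the degenerate case $H_{\ell,n}(\bu)$ is formally the same after the substitutions $T_i\rightsquigarrow s_i$, $T_w\rightsquigarrow w$, $\mL_k\rightsquigarrow L_k$, using the analogous structural facts from \cite{AMR} and \cite{Zh}.

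Axiom (i) is the soft part. Since $\ast$ fixes the generators, $T_w^{\ast}=T_{w^{-1}}$, so $(T_{d(\s)}^{\ast})^{\ast}=T_{d(\s)}$; the element $\sum_{w\in\Sym_\blam}T_w$ is $\ast$-fixed because $w\mapsto w^{-1}$ permutes $\Sym_\blam$; and each $\mL_m=q^{1-m}T_{m-1}\cdots T_1T_0T_1\cdots T_{m-1}$ is a palindrome in the generators, hence $\ast$-fixed, so $\prod_{s=2}^{\ell}\prod_{k=1}^{|\lam^{(1)}|+\cdots+|\lam^{(s-1)}|}(\mL_k-Q_s)$ is $\ast$-fixed as well. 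Applying $\ast$ to the defining expression of $\fm_{\s\t}$ and reversing the order of the three middle factors yields $\fm_{\t\s}$.

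For (ii) and (iii) I would follow the strategy of Dipper--James--Mathas \cite{DJM:cyc}. For (ii) the starting point is the Ariki--Koike basis \cite{AK}: $\HH_{\ell,n}(q,\bQ)$ is $R$-free on $\{\mL_1^{c_1}\cdots\mL_n^{c_n}T_w:0\le c_i<\ell,\ w\in\Sym_n\}$, whence $\rank_R\HH_{\ell,n}=\ell^n n!=\sum_{\blam\in\P_n}|\Std(\blam)|^2$, so it suffices to show the $\fm_{\s\t}$ span. Setting $\fm_\blam:=\bigl(\sum_{w\in\Sym_\blam}T_w\bigr)\prod_{s,k}(\mL_k-Q_s)$, so that $\fm_{\s\t}=T_{d(\s)}^{\ast}\fm_\blam T_{d(\t)}$, one studies the one-sided ``cyclotomic permutation module'' $M^\blam:=\fm_\blam\HH_{\ell,n}$: by downward induction on $\unrhd$, and using Garnir-type relations to rewrite $\fm_\blam T_x$ for arbitrary $x\in\Sym_n$, one shows $\{\fm_\blam T_{d(\t)}:\t\in\Std(\blam)\}$ spans $M^\blam$ modulo $\sum_{\bmu\rhd\blam}\fm_\bmu\HH_{\ell,n}$. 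Combining this one-sided statement with its $\ast$-image and the dimension count gives both the spanning and the linear independence in (ii). Axiom (iii) then comes out of the same straightening analysis: left multiplication of $\fm_\blam T_{d(\t)}$ by a generator alters, modulo $\HH_{\ell,n}^{\rhd\blam}$, only the combinatorial data on an initial segment $\{1,\dots,k\}$, i.e. only the left index, and the coefficients produced are manifestly independent of $\t$ since $T_{d(\t)}$ is left untouched on the right.

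The main obstacle is exactly this straightening step, which feeds both the triangular change of basis in (ii) and the congruence in (iii). It has two intertwined parts: (a) a Hecke-algebra version of the James--Murphy Garnir machinery, which rewrites $\bigl(\sum_{w\in\Sym_\blam}T_w\bigr)T_x$ as an $R$-combination of Murphy basis elements of $\HH_q(\Sym_n)$ that is unitriangular with respect to the dominance order on standard tableaux; and (b) control of the cyclotomic factor $\prod_{s,k}(\mL_k-Q_s)$ as it is commuted past the $T_w$'s, resting on $T_i\mL_m=\mL_mT_i$ for $m\neq i,i+1$ together with the identity relating $\mL_{i+1}$, $\mL_i$, $T_i$, so that the ``content'' bookkeeping on the first $|\lam^{(1)}|+\cdots+|\lam^{(s-1)}|$ nodes is preserved modulo strictly $\rhd$-larger terms. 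Once (a) and (b) are in hand, the verification of the cell-datum axioms is routine; for $H_{\ell,n}(\bu)$ the degenerate Jucys--Murphy elements and the relation $L_{i+1}=s_iL_is_i+s_i$ play the role of that identity.
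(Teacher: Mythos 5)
This theorem is not proved in the paper at all: it is quoted from \cite{DJM}, \cite{DJM:cyc}, \cite{AMR} and \cite{Zh}, so there is no internal argument to measure yours against, and the strategy you describe (verify the Graham--Lehrer axioms for the datum $((\P_n,\unrhd),\Std,\fm,\ast)$, with the Ariki--Koike basis supplying the rank count and a Murphy/Garnir straightening supplying spanning and the cell-multiplication property) is exactly the strategy of those sources. As a proof, however, your text has a genuine gap, and it is the one you yourself flag: items (a) and (b) of your ``main obstacle'' are not auxiliary technicalities but the entire mathematical content of the theorem. Spanning, the unitriangular change of basis that gives linear independence, and axiom (iii) all rest on the straightening of $\bigl(\sum_{w\in\Sym_\blam}T_w\bigr)T_x$ and on commuting the cyclotomic factor $\prod_{s,k}(\mL_k-Q_s)$ past the $T_w$'s modulo $\rhd$-bigger terms; in \cite{DJM:cyc} this occupies the bulk of the argument (Garnir relations for pairs of row-standard tableaux, trapezium/initial-segment bookkeeping for the Jucys--Murphy products), and none of it is carried out here. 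So what you have is a correct roadmap of the cited proof, not a proof.

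One concrete point inside the part you do argue: even axiom (i) needs more than you say. Writing $\fm_{\s\t}=T_{d(\s)}^{\ast}\,x_\blam\,u_\blam^{+}\,T_{d(\t)}$ with $x_\blam=\sum_{w\in\Sym_\blam}T_w$ and $u_\blam^{+}=\prod_{s=2}^{\ell}\prod_{k=1}^{a_s}(\mL_k-Q_s)$, $a_s=|\lam^{(1)}|+\cdots+|\lam^{(s-1)}|$, applying $\ast$ gives $T_{d(\t)}^{\ast}\,u_\blam^{+}\,x_\blam\,T_{d(\s)}$, so ``reversing the middle factors'' requires the identity $x_\blam u_\blam^{+}=u_\blam^{+}x_\blam$, which you assert implicitly. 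It is true, but needs the standard lemma that each $\prod_{k=1}^{a}(\mL_k-Q_s)$ is a symmetric polynomial in $\mL_1,\dots,\mL_a$ and hence commutes with every $T_i$ with $i\neq a$, together with the observation that the Coxeter generators $s_i$ of $\Sym_\blam$ all satisfy $i\neq a_s$ for $2\leq s\leq\ell$; the same commutation (and its degenerate analogue via $L_{i+1}=s_iL_is_i+s_i$) is also what makes your step (b) work, so it should be stated and proved rather than absorbed into the phrase ``reversing the order.''
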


One of the remarkable properties of the basis $\{\fm_{\s\t}\}$ is that it can be defined over an arbitrary ground ring, though the computation of the product of these bases can be rather complicated. When the Hecke algebra $\HH_{\ell,n}(q,\bQ)$ is semisimple, there is another basis (called seminormal basis) of $\HH_{\ell,n}(q,\bQ)$ which is much easier for calculation. Henceforth we assume that $q\neq 1$. Let us recall the following criteria of semisimplicity for $\HH_{\ell,n}(q,\bQ)$.

\begin{lem}\text{(\cite{A1})} Let $R=K$ be a field. Suppose $1\neq q\in K^\times$. Then $\HH_{\ell,n}(q,\bQ)$ is semisimple if and only if \begin{equation}\label{ss0}
\prod_{i=1}^n(1+q+q^2+\cdots+q^{i-1})\prod_{\substack{1\leq i<j\leq\ell\\ |d|<n}}(q^dQ_i-Q_j)\in K^\times .
\end{equation}
\end{lem}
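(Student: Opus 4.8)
The plan is to deduce both implications from the action of the Jucys--Murphy operators $\mL_1,\dots,\mL_n$ on the cellular basis $\{\fm_{\s\t}\}$. For a node $(a,b,c)$ of a Young diagram put $\res(a,b,c):=q^{b-a}Q_c$, and for $\t\in\Std(\blam)$ and $1\le k\le n$ set $c_\t(k):=\res\bigl(\t^{-1}(k)\bigr)$, the residue of the node of $\t$ containing $k$. A routine triangularity calculation (as in \cite{DJM}) shows that $\mL_k\fm_{\t\t}$ equals $c_\t(k)\fm_{\t\t}$ plus a combination of cellular basis elements lower in the dominance order. The first step is the purely combinatorial one of translating \eqref{ss0} into the \emph{separation condition}: the map $\t\mapsto\bigl(c_\t(1),\dots,c_\t(n)\bigr)$ from $\bigcup_{\blam\in\P_n}\Std(\blam)$ to $K^n$ is injective. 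Inspecting when two residue sequences can coincide --- collisions occurring inside a single component of a multipartition, versus collisions spanning two components $\lam^{(i)}$ and $\lam^{(j)}$ --- one finds that injectivity fails exactly when one of the factors of \eqref{ss0} vanishes: the one-component collisions account for the factors $1+q+\cdots+q^{i-1}$ (this is the classical bookkeeping for $\HH_q(\Sym_n)$), and the two-component collisions for the factors $q^dQ_i-Q_j$ with $|d|<n$.

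Granting separation, sufficiency runs through the standard seminormal-form construction. For $\t\in\Std(\blam)$ put $F_\t:=\prod_{k=1}^n\prod_c\frac{\mL_k-c}{c_\t(k)-c}$, the inner product ranging over the finitely many residues $c\ne c_\t(k)$ that occur as $c_\u(k)$ for some standard $\u$; separation makes every denominator a unit. Using the triangular action of the $\mL_k$, one checks that the $F_\t$ are pairwise orthogonal idempotents with $\sum_\t F_\t=1$, that $f_{\s\t}:=F_\s\fm_{\s\t}F_\t$ is obtained from $\fm_{\s\t}$ by a unitriangular base change (so $\{f_{\s\t}\}$ is again a basis of $\HH_{\ell,n}$), and that $f_{\s\t}f_{\u\v}=\delta_{\t\u}\gamma_\t f_{\s\v}$ with $\gamma_\t\in K$ --- and the $\gamma_\t$, being products of differences of residues, are nonzero by separation. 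This presents $\HH_{\ell,n}$ as $\bigoplus_{\blam\in\P_n}\operatorname{Mat}_{|\Std(\blam)|}(K)$, hence split semisimple.

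For necessity I would split according to which factor of \eqref{ss0} vanishes. If $1+q+\cdots+q^{i-1}=0$ for some $2\le i\le n$, then $T_0\mapsto Q_1,\ T_j\mapsto T_j$ $(1\le j<n)$ respects all the defining relations and so defines a surjection $\HH_{\ell,n}(q,\bQ)\twoheadrightarrow\HH_q(\Sym_n)$; since a quotient of a semisimple algebra is semisimple and $\HH_q(\Sym_n)$ fails to be semisimple under this hypothesis (the classical type-$A$ criterion), $\HH_{\ell,n}(q,\bQ)$ is not semisimple. If instead $q^dQ_i=Q_j$ with $1\le i<j\le\ell$ and $|d|<n$, I would produce a degenerate cell module: $\HH_{\ell,n}$ is cellular, so by Graham--Lehrer it is semisimple iff the cellular bilinear form on every cell module $S^\blam$ is nondegenerate, and one shows --- by a Gram-determinant computation, or by locating where the idempotent $F_\t$ above acquires a zero denominator --- that a vanishing factor $q^dQ_i-Q_j$ with $|d|<n$ forces degeneracy on some $S^\blam$. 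Concretely, taking $m:=|d|+1\le n$ and a suitable two-component multipartition one can exhibit standard tableaux $\s\in\Std(\blam)$, $\t\in\Std(\bmu)$ of different shapes $\blam\rhd\bmu$ sharing a residue sequence, whence the collapse of $F_\s,F_\t$ yields a nonzero homomorphism between the non-isomorphic cell modules $S^\blam$ and $S^\bmu$.

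The hard part is the necessity of the factors $q^dQ_i-Q_j$: they do not die in any obvious quotient of $\HH_{\ell,n}$ (unlike the factors $1+q+\cdots+q^{i-1}$), so non-semisimplicity must be extracted from the cell-module structure, either through the Gram-determinant formula or through an explicit construction of colliding standard tableaux of different shapes together with the resulting non-split extension; and the reduction to the small two-component case needs care, since it is not merely a subalgebra statement. The other genuinely fiddly ingredient is the combinatorial matching between the possible residue-sequence collisions and exactly the factors displayed in \eqref{ss0}, which has to be verified uniformly over all of $\P_n$.
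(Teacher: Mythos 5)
The paper itself offers no proof of this lemma: it is quoted from Ariki \cite{A1} (and the equivalence of \eqref{ss0} with the separation condition is likewise only asserted), so there is no internal argument to compare yours with; I can only judge the outline on its own terms. Your sufficiency half is the standard route and is essentially the machinery the paper later recalls from \cite{Ma}: separation of residue sequences, the orthogonal idempotents $F_\t$, the seminormal elements $\mff_{\s\t}=F_\s\fm_{\s\t}F_\t$ with $\mff_{\s\t}\mff_{\u\v}=\delta_{\t\u}\gamma_\t\mff_{\s\v}$, and the resulting decomposition into matrix algebras; modulo the routine triangularity checks this is sound. The necessity of the factors $1+q+\cdots+q^{i-1}$ via the specialization $T_0\mapsto Q_1$, which is compatible with all defining relations and maps $\HH_{\ell,n}(q,\bQ)$ onto $\HH_q(\Sym_n)$, is also fine once you invoke the classical type-$A$ criterion.

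The genuine gap is exactly where you locate it, and it is not closed by what you write: the necessity of the factors $q^dQ_i-Q_j$ with $|d|<n$. The existence of standard tableaux of different shapes with equal residue sequences does not by itself yield a nonzero homomorphism between the corresponding cell modules, and ``the collapse of $F_\s,F_\t$'' has no meaning here, since when \eqref{ss0} fails those idempotents are not even defined (their denominators vanish); a coincidence of eigenvalue sequences is a priori compatible with semisimplicity, so some genuine degeneracy must be exhibited. Closing this requires either computing (or citing) the Gram determinants of the cell modules and showing that a vanishing factor $q^dQ_i-Q_j$ kills one of them --- in effect what Ariki does, and what the sum formula of \cite{JM} or the generic degree formulas provide --- or constructing an explicit nonzero homomorphism between cell modules of distinct shapes in the spirit of \cite{DJM}, which is a real computation with the basis $\fm_{\s\t}$, not residue bookkeeping. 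Your proposed reduction to a two-component case with $m=|d|+1$ boxes has the same status: since a subalgebra of a semisimple algebra need not be semisimple, non-semisimplicity of $\HH_{\ell,m}$ does not transfer to $\HH_{\ell,n}$ without an additional argument (a trace/conditional-expectation argument or an induction-of-modules argument), which you flag but do not supply. Finally, note that the $\gamma$-coefficients contain factors such as $q\res_\s(i)-\res_\s(i+1)$, which are not literally differences of residues of standard tableaux, so their invertibility rests on the full product \eqref{ss0} (equivalently, on the combinatorial matching you only gesture at) rather than on separation as stated.
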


For any $\t=(\t^{(1)},\cdots,\t^{(\ell)})\in\Std(\blam)$ and any $1\leq k\leq n$, we define \begin{equation}
\res_{\t}(k)=q^{j-i}Q_c,\quad  \text{if $k$ appears in row $i$ and column $j$ of $\t^{(c)}$}
\end{equation}
We also define $R(k):=\{\res_\t(k) | \t\in\Std(\blam), \blam\vdash n\}$.

The condition (\ref{ss0}) is actually equivalent to the following statement: \begin{equation}\label{separacontent1}
\begin{matrix}\text{for any $\blam,\bmu\in\P_n$, $\s\in\Std(\blam), \t\in\Std(\bmu)$, if $\s\neq\t$, then there exists}\\
\text{ $1\leq k\leq n$ such that $\res_\s(k)-\res_\t(k)\in K^\times$.}\end{matrix}
\end{equation}

\begin{dfn}\label{Ft}(\cite{Mur3}, \cite[Definition 2.4]{Ma}) Suppose $q\neq 1$ and (\ref{ss0}) holds. Let $\blam\in\P_n$ and $\t\in\Std(\blam)$. We define
$$F_{\t}=\prod\limits^n\limits_{k=1}\prod\limits_{\substack{c\in R(k)\\c\neq \res_{\t}(k)}}\frac{\mL_k-c}{\res_{\t}(k)-c}.$$
\end{dfn}

For any $\blam\in\P_n$ and $\s,\t\in\Std(\blam)$, we define \begin{equation}\label{seminormal}
\mff_{\s\t}^{(n)}:=F_\s \fm_{\s\t}F_\t .\end{equation}
When the context is clear, we shall omit the superscript ``$(n)$'' and write $\mff_{\s\t}$ instead of $\mff_{\s\t}^{(n)}$.

For any $k\in\Z^{\geq 0}$, we define $[k]_q=\sum\limits_{i=0}^{k-1}q^i$. For any $m\in\Z^{\geq 0}$, we set $[m]^!_q=[1]_q[2]_q\cdots[m]_q$. If $\blam=(\lam^{(1)},\cdots,\lam^{(\ell)})\in\P_n$, then we define $[\blam]^!_q=\prod\limits_{c=1}^\ell\prod\limits_{i\geq1}[\lambda^{(c)}_i]^!_q$.

\begin{dfn}{\rm (\cite{Mur3}, \cite[(3.17)-(3.19)]{JM}, \cite[2.9]{Ma})}\label{gamma1} Suppose $q\neq 1$ and (\ref{ss0}) holds. Let $\blam\in\P_n$. The $\gamma$-coefficients $\{\gamma_\t^{(n)}|\t\in\Std(\blam),\blam\in\P_n\}$ are defined to be a multiset of invertible scalars in $K^\times$ which are uniquely determined by:
	\begin{enumerate}
		\item $\gamma_{\t^{\blam}}^{(n)}=[\blam]^!_q\prod\limits_{1\leq s<t\leq\ell}\prod\limits_{1\leq j\leq\lam_i^{(s)}}(q^{j-i}Q_s-Q_t)$; and
		\item if $\s=\t(i,i+1)\triangleright\t$ then $$ \frac{\gamma_{\t}^{(n)}}{\gamma_{\s}^{(n)}}=\frac{(q\res_{\s}(i)-\res_{\t}(i))(\res_{\s}(i)-q\res_{\t}(i))}{(\res_{\s}(i)-\res_{\t}(i))^2}. $$
	\end{enumerate}
\end{dfn}
When the context is clear, we shall omit the superscript ``$(n)$'' and write $\gamma_{\t}$ instead of $\gamma_{\t}^{(n)}$. 

\begin{lem}\label{obobn}{\rm (\cite[Theorems 2.11,2.15, Corollary 2.13]{Ma})}
	Suppose $q\neq 1$, (\ref{ss0}) holds and $R=K$ is a field. Then $$\{\mff_{\s\t}\ |\ \s,\t \in \Std(\blam), \blam\in\P_n\}$$ is a basis of $\HH_{\ell,n}(q,\bQ)$. Moreover,
\begin{enumerate}
\item[1)] if $\s, \t, \u$ and $\v$ are standard tableaux, then $\mff_{\s\t}\mff_{\u\v}=\delta_{\t\u}\gamma_{\t}\mff_{\s\v}$;
\item[2)] if $\blam\in\P_n$, $\s,\t\in\Std(\blam)$ and $1\leq k\leq n$, then $\mff_{\s\t}\mL_k=\res_\t(k)\mff_{\s\t}$, $\mL_k\mff_{\s\t}=\res_\s(k)\mff_{\s\t}$;
\item[3)] for each $\lam\in\P_n$ and $\t\in\Std(\blam)$, $F_{\t}=\frac{1}{\gamma_{\t}}\mff_{\t\t}$ and $F_{\t}$ is a primitive idempotent;
\item[4)] $\{F_\t|\t\in\Std(\blam),\blam\in\P_n\}$ is a complete set of pairwise orthogonal primitive idempotents in $\HH_{\ell,n}(q,\bQ)$.
\end{enumerate}
\end{lem}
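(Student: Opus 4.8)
\emph{Proof proposal.} The statement is the standard seminormal basis theory for the cellular algebra $\HH_{\ell,n}$ equipped with the commuting family $\mL_1,\dots,\mL_n$, and the plan is to first show that this family is well adapted to the cellular basis $\{\fm_{\s\t}\}$ and then to read off 1)--4). The crucial input, which I would quote from \cite{DJM} (cf.\ \cite{JM}), is the triangular action: the $\mL_k$ pairwise commute, satisfy $\mL_k^\ast=\mL_k$, and for $\blam\in\P_n$, $\s,\t\in\Std(\blam)$ and $1\le k\le n$,
$$ \fm_{\s\t}\,\mL_k \equiv \res_\t(k)\,\fm_{\s\t} + \sum_{\substack{\v\in\Std(\blam)\\ \v\rhd\t}} c^{(k)}_{\t\v}\,\fm_{\s\v} \pmod{\HH^{\rhd\blam}_{\ell,n}}, $$
where $\HH^{\rhd\blam}_{\ell,n}$ is the two-sided cellular ideal spanned by the $\fm_{\u\v}$ with $\Shape(\u)\rhd\blam$; applying $\ast$ gives the version with the row index $\s$ going up. (This congruence is an induction on $\ell(d(\t))$ from the base case $\fm_{\t^\blam\t^\blam}\mL_k\equiv\res_{\t^\blam}(k)\fm_{\t^\blam\t^\blam}$ and the combinatorial rule for how $\res_\t(k)$ changes under $\t\mapsto\t(i,i+1)$.)

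Next I would prove that $\{F_\t\}$ is a complete set of pairwise orthogonal idempotents. Under \eqref{ss0}, condition \eqref{separacontent1} holds, so distinct standard tableaux have distinct residue sequences. By the triangularity above and the cell-module filtration of $\HH_{\ell,n}$, the commuting operators $\mL_1,\dots,\mL_n$ are simultaneously upper triangular on each cell module with joint diagonal tuples $(\res_\t(k))_k$; since these tuples are pairwise distinct, every joint generalized eigenspace there is at most one-dimensional, so the $\mL_k$ are simultaneously diagonalizable on each cell module with $\mL_k$-eigenvalues in $R(k)$. By Ariki's criterion (the Lemma above), $\HH_{\ell,n}$ is semisimple, hence acts faithfully on the direct sum of its cell modules, whence $\prod_{c\in R(k)}(\mL_k-c)=0$ in $\HH_{\ell,n}$. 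Lagrange interpolation inside the commutative subalgebra $K[\mL_1,\dots,\mL_n]$ then yields $F_\s F_\t=\delta_{\s\t}F_\t$, $\mL_k F_\t=\res_\t(k)F_\t$ and $\sum_\t F_\t=1$ (the last point because, by triangularity again, the joint spectrum of $(\mL_1,\dots,\mL_n)$ on $\HH_{\ell,n}$ is exactly $\{(\res_\t(k))_k:\t\in\Std(\blam),\blam\in\P_n\}$).

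Now put $\mff_{\s\t}:=F_\s\fm_{\s\t}F_\t$. Since the interpolation polynomial defining $F_\t$ takes value $1$ at $(\res_\t(k))_k$ and $0$ at the residue tuple of every other standard tableau, applying $F_\t$ on the right and $F_\s$ on the left to $\fm_{\s\t}$ and using the triangularity and its transpose shows that $\mff_{\s\t}$ equals $\fm_{\s\t}$ plus a combination of basis elements $\fm_{\u\v}$ with $\Shape(\u)\rhd\blam$, or with $\Shape(\u)=\blam$, $(\u,\v)\neq(\s,\t)$, $\u\unrhd\s$ and $\v\unrhd\t$; hence the transition matrix from $\{\fm_{\s\t}\}$ to $\{\mff_{\s\t}\}$ is unitriangular and $\{\mff_{\s\t}\}$ is a basis. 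Part 2) is then immediate: $\mff_{\s\t}\mL_k=F_\s\fm_{\s\t}(F_\t\mL_k)=\res_\t(k)\mff_{\s\t}$, and symmetrically $\mL_k\mff_{\s\t}=\res_\s(k)\mff_{\s\t}$. Moreover $\mff_{\s\t}=\mff_{\s\t}F_\t$ and $\mff_{\u\v}=F_\u\mff_{\u\v}$, so $\mff_{\s\t}\mff_{\u\v}=F_\s\fm_{\s\t}(F_\t F_\u)\fm_{\u\v}F_\v=0$ unless $\t=\u$; and since $F_\s\mff_{\u\v}F_\w=\delta_{\s\u}\delta_{\v\w}\mff_{\s\w}$, the corner $F_\s\HH_{\ell,n}F_\w$ equals $K\mff_{\s\w}$ when $\Shape(\s)=\Shape(\w)$ and is $0$ otherwise. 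Thus $\mff_{\s\t}\mff_{\t\v}=c_\t\mff_{\s\v}$ for a scalar $c_\t$, which an argument using $\ast$ (fixing each $F_\t$ and sending $\mff_{\s\t}\mapsto\mff_{\t\s}$) together with associativity shows depends on $\t$ alone; and as the one-dimensional corner $F_\t\HH_{\ell,n}F_\t=K\mff_{\t\t}$ contains the idempotent $F_\t$, we get $F_\t=c_\t^{-1}\mff_{\t\t}$ (so $c_\t\neq0$), while a one-dimensional corner ring forces $F_\t$ to be primitive. This gives 3), and with the preceding paragraph, 4).

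It remains to identify $c_\t$ with the combinatorially defined $\gamma_\t$ of Definition~\ref{gamma1}, which is the main obstacle and is where the explicit shape of the Dipper-James-Mathas basis and the invertible elements $\Phi_s$ of \cite{Ma} genuinely enter. For $\t=\t^\blam$ one has $c_{\t^\blam}=\langle\fm_{\t^\blam},\fm_{\t^\blam}\rangle$, the value of the Dipper-James-Mathas bilinear form on the top cell-module vector (because $\fm_{\t^\blam\t^\blam}^2\equiv c_{\t^\blam}\fm_{\t^\blam\t^\blam}$ modulo $\HH^{\rhd\blam}_{\ell,n}$, and $\t^\blam$ is the $\unrhd$-maximal tableau of shape $\blam$), and the straightening computation of \cite{DJM} evaluates this to $[\blam]^!_q\prod_{1\le s<t\le\ell}\prod_{1\le j\le\lambda^{(s)}_i}(q^{j-i}Q_s-Q_t)$, which is Definition~\ref{gamma1}(1). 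For the inductive step, if $\s=\t(i,i+1)\rhd\t$, then conjugating $\mff_{\t\t}$ by the invertible element $\Phi_i\in\HH_{\ell,n}$ of \cite{Ma} (built from $T_i$, $\mL_i$, $\mL_{i+1}$) relates $F_\s=c_\s^{-1}\mff_{\s\s}$ to $F_\t=c_\t^{-1}\mff_{\t\t}$ and yields precisely $c_\t/c_\s=(q\res_\s(i)-\res_\t(i))(\res_\s(i)-q\res_\t(i))/(\res_\s(i)-\res_\t(i))^2$, which is Definition~\ref{gamma1}(2). Since every $\t\in\Std(\blam)$ is connected to $\t^\blam$ by moves $\t\leftrightarrow\t(i,i+1)$ and the $\gamma$-coefficients are uniquely determined by (1) and (2), we get $c_\t=\gamma_\t$, which completes 1) and the Lemma. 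I expect the triangularity input (which I would cite rather than reprove) and, above all, the two explicit computations matching the recursion defining $\gamma_\t$ to be the delicate parts.
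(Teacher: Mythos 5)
Your route is, in outline, the proof of \cite[Theorems 2.11, 2.15, Corollary 2.13]{Ma} that the paper simply cites without reproving: triangular action of the $\mL_k$ on the cellular basis, the separation condition \eqref{separacontent1} turning the $F_\t$ into a complete family of pairwise orthogonal idempotents, the unitriangular change of basis giving that $\{\mff_{\s\t}\}$ is a basis, and finally the identification of the structure constant with $\gamma_\t$ via the base case $\t^\blam$ and the recursion coming from the intertwining elements of \cite{Ma}. Parts 2), 3), 4) and the basis statement are fine as sketched.

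There is, however, one genuine gap in your treatment of part 1): the assertion that $\mff_{\s\t}\mff_{\t\v}=c_\t\mff_{\s\v}$ with a scalar depending on $\t$ alone ``by an argument using $\ast$ together with associativity.'' Associativity, the symmetry $\mff_{\s\t}^*=\mff_{\t\s}$, the one-dimensionality of the corners and the identity $F_\t=c_\t^{-1}\mff_{\t\t}$ do \emph{not} suffice for this. Indeed, replace $\mff_{\s\t}$ by $\lambda_{\s\t}\mff_{\s\t}$ for arbitrary nonzero scalars with $\lambda_{\s\t}=\lambda_{\t\s}$ and $\lambda_{\t\t}=1$: every property you have established and used up to that point survives unchanged, but the structure constant of the product becomes $\lambda_{\s\t}\lambda_{\t\v}\lambda_{\s\v}^{-1}c_\t$, which genuinely depends on $\s$ and $\v$. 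So the independence cannot be a formal consequence of those ingredients; what it really uses is the concrete normalization of the seminormal basis. The standard repairs are: (i) the action formula of \cite[Proposition 2.7]{Ma} (Lemma \ref{seminormalB} in this paper), whose coefficients $A_i(\s),B_i(\s)$ depend only on the column index $\s$ and not on the row index $\u$; from it one produces, for fixed $\t,\v$ of the same shape, a single element $\Psi$ (a product of factors $T_i-A_i(\cdot)$ divided by the nonzero constants $B_i$) with $\mff_{\u\t}\Psi=\mff_{\u\v}$ for every row index $\u$ simultaneously, whence $\mff_{\s\t}\mff_{\t\v}=\mff_{\s\t}\mff_{\t\t}\Psi=c_\t\mff_{\s\t}\Psi=c_\t\mff_{\s\v}$; or (ii) the cell-module bilinear form argument, which identifies the constant with $\langle f_\t,f_\t\rangle$ and crucially uses the unitriangular expansion $\mff_{\s\t}=\fm_{\s\t}+\cdots$ -- a fact you prove but never exploit at this step. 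Since you already invoke the intertwiners and the $T_i$-action in your final paragraph to get the recursion $c_\t/c_\s$, the fix costs nothing, but as written the ``depends on $\t$ alone'' step is unjustified, and without it part 1) (and hence the identification $c_\t=\gamma_\t$ for all pairs, not just the diagonal ones) is not yet proved.
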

We call $\{\mff_{\s\t}\ |\ \s,\t \in \Std(\blam), \blam\in\P_n\}$ the {\bf seminormal basis} of $\HH_{\ell,n}(q,\bQ)$ corresponding to the {\bf cellular basis} $\{\fm_{\s\t}\ |\ \s,\t \in \Std(\blam), \blam\in\P_n\}$ of $\HH_{\ell,n}(q,\bQ)$.\medskip

In the rest of this section, we consider the degenerate cyclotomic Hecke algebra $H_{\ell,n}(\bu)$. First, let's recall the following criteria of semisimplicity for $H_{\ell,n}(\bu)$.

\begin{lem}\text{(\cite[Theorem 6.11]{AMR})} Let $R=K$ be a field. Then $H_{\ell,n}(\bu)$ is semisimple if and only if \begin{equation}\label{ss0b}
(n!)\prod_{\substack{1\leq i<j\leq\ell\\ |d|<n}}(d\cdot 1_K+u_i-u_j)\in K^\times .
\end{equation}
\end{lem}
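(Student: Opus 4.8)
The plan is to exploit the cellular structure of $H_{\ell,n}(\bu)$ together with its Jucys--Murphy elements $L_1,\dots,L_n$, reducing semisimplicity to a combinatorial ``separation'' property of the residue functions which will turn out to be exactly \eqref{ss0b}. Recall that $H_{\ell,n}(\bu)$ is cellular with cell modules $\{S^\blam\mid\blam\in\P_n\}$, $\dim_K S^\blam=\#\Std(\blam)$, and that a cellular algebra over a field $K$ is split semisimple if and only if the Gram matrix $G_\blam$ of the cellular bilinear form on every $S^\blam$ is nonsingular (Graham--Lehrer). The $L_k$ act on the cellular basis triangularly for the dominance order, ${\rm m}_{\s\t}L_k\equiv\res_\t(k)\,{\rm m}_{\s\t}$ modulo more dominant terms, with $\res_\t(k)=(j-i)\cdot 1_K+u_c$ when $k$ lies in row $i$, column $j$ of component $c$ of $\t$; so the content functions $k\mapsto\res_\t(k)$ carry the relevant data, and $L_k$ plays here the role of $\mL_k$ in Definition~\ref{Ft}.

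First I would prove the implication \eqref{ss0b}$\Rightarrow$semisimple; the crux is the \emph{separation lemma}: if \eqref{ss0b} holds, then for any two distinct standard tableaux $\s,\t$ (of arbitrary shapes in $\P_n$) there is a $k$ with $\res_\s(k)\ne\res_\t(k)$ in $K$. I would prove this by induction on $n$, looking at the nodes occupied by $n$: if $\res_\s(n)\ne\res_\t(n)$ we are done, and otherwise I would show $\s\!\downarrow_{n-1}\ne\t\!\downarrow_{n-1}$ and invoke the inductive hypothesis --- for if $\s\!\downarrow_{n-1}=\t\!\downarrow_{n-1}=:\u$ then $n$ sits in two distinct addable nodes of $\Shape(\u)$, and \eqref{ss0b} forbids this while keeping $\res_\s(n)=\res_\t(n)$: two distinct addable nodes in one component have content difference of absolute value at most $n$ (excluded since $n!\in K^\times$), and two addable nodes in different components $i\ne j$ have content difference $d$ with $|d|<n$ (excluded since $d\cdot 1_K+u_i-u_j\in K^\times$). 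Given separation, one builds the degenerate analogue of the idempotents $F_\t$ of Definition~\ref{Ft} by Lagrange interpolation in $L_1,\dots,L_n$; separation makes these a complete set of pairwise orthogonal idempotents, and the triangular action of the $L_k$ then yields a seminormal basis $\{{\rm f}_{\s\t}\}$ with ${\rm f}_{\s\t}{\rm f}_{\u\v}=\delta_{\t\u}\,r_\t\,{\rm f}_{\s\v}$, $r_\t\in K^\times$, exactly as in Lemma~\ref{obobn}. Hence $H_{\ell,n}(\bu)\cong\bigoplus_{\blam\in\P_n}\End_K(S^\blam)$, which is semisimple.

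For the converse I argue by contraposition, exhibiting a one-dimensional cell module whose Gram form vanishes. Suppose \eqref{ss0b} fails. If some $d\cdot 1_K+u_i-u_j=0$ with $|d|<n$ and $i<j$, take $\blam$ whose only nonempty component is $\lambda^{(i)}$, equal to the single row $(n)$ if $d\ge0$ and to the single column $(1^n)$ if $d<0$; then $\Std(\blam)$ is a singleton, $S^\blam$ is one-dimensional, and --- using that the symmetric functions of $L_1,\dots,L_n$ are central in $H_{\ell,n}(\bu)$ --- a short computation shows that $\det G_\blam$ equals $r_{\t^\blam}$, the degeneration of $\gamma_{\t^\blam}$ in Definition~\ref{gamma1}(1), which has the chosen factor $d\cdot 1_K+u_i-u_j$ among its factors (the content $d$ occurs in $\lambda^{(i)}$ by our choice of a row or a column), hence $\det G_\blam=0$. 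If instead $n!=0$ in $K$, take $\blam=((n),\emptyset,\dots,\emptyset)$; the same $\det G_\blam$ then has $n!$ as a factor and again vanishes. In either case $H_{\ell,n}(\bu)$ is not semisimple.

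The main obstacle is the separation lemma in the forward direction: it is the one place where the precise form of \eqref{ss0b} is used, and it is a genuine theorem because separation can fail ``pointwise'' --- two distinct nodes of one Young diagram may carry the same content --- so the induction on $n$ through the position of the entry $n$ is essential, together with the numerology that $n$ and $n-1$ are exactly the bounds on the content differences of addable nodes of multipartitions of $n-1$ within, respectively across, components. A secondary task is to record the triangular action of the $L_k$ on the cellular basis of $H_{\ell,n}(\bu)$ precisely enough to feed the standard Jucys--Murphy seminormal-basis machinery; this is the degenerate analogue of the facts underlying Lemma~\ref{obobn} and is essentially contained in \cite{AMR,Zh}.
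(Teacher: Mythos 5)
Your argument is correct in outline, but note that the paper itself contains no proof of this lemma: it is quoted verbatim from \cite[Theorem 6.11]{AMR}, so there is no internal argument to compare against; what you have written is essentially the standard proof from the literature (separation of residues plus seminormal forms for sufficiency, vanishing of a Gram determinant for necessity). Your key numerical step is sound: for two distinct addable nodes of a multipartition of $n-1$ lying in the same component the content difference is a nonzero integer of absolute value at most $n$, which is ruled out by $n!\in K^\times$, while across components $i\neq j$ the difference $d$ satisfies $|d|\leq n-1<n$, which is ruled out by $d\cdot 1_K+u_i-u_j\in K^\times$; together with the observation that condition (\ref{ss0b}) for $n$ implies it for $n-1$, the induction establishing the separation property goes through, and the Lagrange-interpolation idempotents then give semisimplicity exactly as in the degenerate analogue of Lemma \ref{obobn}. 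Two points deserve to be made explicit in a full write-up. First, in the converse, the identity $\det G_{\blam}=r_{\t^{\blam}}$ for your one-dimensional cell modules is not automatic from the semisimple theory: you should either carry out the direct computation of ${\rm m}_{\t^\blam\t^\blam}^2$ modulo $H_{\ell,n}^{\rhd\blam}$ (the degenerate analogue of \cite[(3.17)--(3.19)]{JM}, cf.\ \cite[Lemma 6.10]{AMR}), or argue that both sides are the same polynomial in $\Z[u_1,\dots,u_\ell]$ because they agree over the generic field $\Q(u_1,\dots,u_\ell)$ and then specialize; your sketch gestures at this but the specialization step is the actual content. Second, the sufficiency direction needs the remark that the elements ${\rm f}_{\s\t}$ really form a basis (unitriangularity of the transition from $\{{\rm m}_{\s\t}\}$ with respect to the cellular filtration), which you implicitly invoke via Lemma \ref{obobn}; with these details filled in, your proof is a complete and faithful reconstruction of the cited result.
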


For any $\t=(\t^{(1)},\cdots,\t^{(\ell)})\in\Std(\blam)$ and any $1\leq k\leq n$, we define \begin{equation}
c_{\t}(k)=j-i+u_c,\quad  \text{if $k$ appears in row $i$ and column $j$ of $\t^{(c)}$}
\end{equation}
We also define $C(k):=\{c_\t(k) | \t\in\Std(\blam), \blam\vdash n\}$.

The condition (\ref{ss0b}) is actually equivalent to the following statement: \begin{equation}\label{separacontent1b}
\begin{matrix}\text{for any $\blam,\bmu\in\P_n$, $\s\in\Std(\blam), \t\in\Std(\bmu)$, if $\s\neq\t$, then there exists}\\
\text{$1\leq k\leq n$ such that $c_\s(k)-c_\t(k)\in K^\times$.}\end{matrix}
\end{equation}

\begin{dfn}\label{Ftb}(\cite{Mur1}, \cite[Definition 6.7]{AMR}) Suppose (\ref{ss0b}) holds. Let $\blam\in\P_n$ and $\t\in\Std(\blam)$. We define
$${\rm F}_{\t}=\prod\limits^n\limits_{k=1}\prod\limits_{\substack{c\in C(k)\\c\neq c_{\t}(k)}}\frac{L_k-c}{c_{\t}(k)-c}.$$
\end{dfn}

For any $\blam\in\P_n$ and $\s,\t\in\Std(\blam)$, we define \begin{equation}\label{seminormalb}
{\rm f}_{\s\t}^{(n)}:={\rm F}_\s {\rm m}_{\s\t}{\rm F}_\t .\end{equation}
When the context is clear, we shall omit the superscript and write ${\rm f}_{\s\t}$ instead of ${\rm f}_{\s\t}^{(n)}$.

\begin{dfn}{\rm (\cite{Mur1}, \cite[Lemma 6.10]{AMR})}\label{gamma1b} Suppose (\ref{ss0b}) holds. Let $\blam\in\P_n$. We define a multiset of elements $\{r^{(n)}_\t\in K^\times|\t\in\Std(\blam),\blam\in\P_n\}$ in $K^\times$ as follows:
	\begin{enumerate}
		\item $r^{(n)}_{\t^{\blam}}=\Bigl(\prod_{l=1}^{\ell}\prod_{i\geq 1}\lam_i^{(l)}!\Bigr)\prod\limits_{1\leq s<t\leq\ell}\prod\limits_{1\leq j\leq\lam_i^{(s)}}(j-i+u_s-u_t)$; and
		\item if $\s=\t(i,i+1)\triangleright\t$ then $$ \frac{r^{(n)}_{\t}}{r^{(n)}_{\s}}=\frac{(1+c_{\s}(i)-c_{\t}(i))(c_{\s}(i)-c_{\t}(i)-1)}{(c_{\s}(i)-c_{\t}(i))^2}. $$
	\end{enumerate}
\end{dfn}
When the context is clear, we shall omit the superscript ``$(n)$'' and write $r_{\t}$ instead of $r_{\t}^{(n)}$.

\begin{lem}\label{obobn2}{\rm (\cite[Proposition 3.4]{Ma2})}
	Suppose (\ref{ss0b}) holds and $R=K$ is a field. Then $$\{{\rm f}_{\s\t}\ |\ \s,\t \in \Std(\blam), \blam\in\P_n\}$$ is a basis of $H_{\ell,n}(\bu)$. Moreover,
\begin{enumerate}
\item[1)] if $\s, \t, \u$ and $\v$ are standard tableaux, then ${\rm f}_{\s\t}{\rm f}_{\u\v}=\delta_{\t\u}r_{\t}{\rm f}_{\s\v}$;
\item[2)] if $\blam\in\P_n$, $\s,\t\in\Std(\blam)$ and $1\leq k\leq n$, then ${\rm f}_{\s\t}L_k=c_\t(k){\rm f}_{\s\t}$, $L_k{\rm f}_{\s\t}=c_\s(k){\rm f}_{\s\t}$;
\item[3)] for each $\lam\in\P_n$ and $\t\in\Std(\blam)$, ${\rm F}_{\t}=\frac{1}{r_{\t}}{\rm f}_{\t\t}$ and ${\rm F}_{\t}$ is a primitive idempotent;
\item[4)] $\{{\rm F}_\t|\t\in\Std(\blam),\blam\in\P_n\}$ is a complete set of pairwise orthogonal primitive idempotents in $H_{\ell,n}(\bu)$.
\end{enumerate}
\end{lem}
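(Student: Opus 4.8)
The plan is to deduce Lemma~\ref{obobn2} from the general machinery of cellular algebras carrying a separating family of Jucys--Murphy elements, proceeding in parallel with the non-degenerate Lemma~\ref{obobn} and \cite{Ma}. The starting point is to check that $L_1,\cdots,L_n$ is such a family for the cellular basis $\{{\rm m}_{\s\t}\}$ of $H_{\ell,n}(\bu)$: these elements are fixed by the anti-involution $\ast$, they commute with one another, and --- using the relation $L_{i+1}=s_iL_is_i+s_i$ and the explicit shape of ${\rm m}_{\s\t}$ in Definition~\ref{cellularbases1} --- each $L_k$ acts triangularly on the cellular basis, in the sense that ${\rm m}_{\s\t}L_k\equiv c_\t(k){\rm m}_{\s\t}$ modulo the $K$-span of the ${\rm m}_{\s\v}$ with $\v\rhd\t$ together with the ideal spanned by all cellular basis elements whose shape strictly dominates $\Shape(\t)$, and symmetrically on the left. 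Since (\ref{ss0b}) is equivalent to the separation condition (\ref{separacontent1b}), the residue sequences $(c_\t(1),\cdots,c_\t(n))$ are pairwise distinct as $\t$ ranges over all standard tableaux, so every denominator $c_\t(k)-c$ in Definition~\ref{Ftb} is a unit of $K$ and ${\rm F}_\t$ is well defined.

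Granting this set-up, the formal part of the proof is routine. Semisimplicity of $H_{\ell,n}(\bu)$ together with the triangular action forces $\prod_{c\in C(k)}(L_k-c)=0$ for each $k$, so $L_k{\rm F}_\t={\rm F}_\t L_k=c_\t(k){\rm F}_\t$, and putting ${\rm f}_{\s\t}={\rm F}_\s{\rm m}_{\s\t}{\rm F}_\t$ immediately gives part~2). A Lagrange interpolation argument inside $K[L_1,\cdots,L_n]$ yields $\sum_\t{\rm F}_\t=1$ and ${\rm F}_\s{\rm F}_\t=\delta_{\s\t}{\rm F}_\t$ (one also checks, via triangularity, that the corresponding product of Lagrange idempotents in the $L_k$ vanishes on any tuple of residues not realised by a standard tableau). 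The triangular action then shows ${\rm f}_{\s\t}\equiv{\rm m}_{\s\t}$ modulo strictly more dominant cellular basis elements, so the transition matrix is unitriangular and $\{{\rm f}_{\s\t}\}$ is a basis of $H_{\ell,n}(\bu)$. Triangularity also gives ${\rm F}_\s\,x\,{\rm F}_\t\in K{\rm f}_{\s\t}$ for all $x\in H_{\ell,n}(\bu)$, so ${\rm F}_\s H_{\ell,n}(\bu){\rm F}_\t$ is spanned by ${\rm f}_{\s\t}$ and is one-dimensional when $\Shape(\s)=\Shape(\t)$, zero otherwise; in particular ${\rm f}_{\s\t}{\rm f}_{\u\v}=\delta_{\t\u}\,\omega_\t\,{\rm f}_{\s\v}$ for some $\omega_\t\in K$, each ${\rm F}_\t$ is primitive, and writing ${\rm F}_\t=\lambda{\rm f}_{\t\t}$ and using ${\rm F}_\t^2={\rm F}_\t$ with ${\rm f}_{\t\t}^2=\omega_\t{\rm f}_{\t\t}$ forces $\omega_\t\in K^\times$ and ${\rm F}_\t=\omega_\t^{-1}{\rm f}_{\t\t}$. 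Combined with $\sum_\t{\rm F}_\t=1$ this gives parts~3) and~4), and part~1) reduces to proving $\omega_\t=r_\t$.

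The final and most substantial step is to identify $\omega_\t$ with the coefficient $r_\t$ of Definition~\ref{gamma1b}, i.e.\ to show that $\omega_\t$ satisfies the two properties that characterise $r_\t$. For the base case $\t=\t^\blam$, a direct computation does it: $\sum_{w\in\Sym_\blam}w$ is $\prod_{l=1}^{\ell}\prod_{i\ge1}\lam_i^{(l)}!=|\Sym_\blam|$ times an idempotent, and the factor $\prod_{s=2}^{\ell}\prod_{k=1}^{|\lam^{(1)}|+\cdots+|\lam^{(s-1)}|}(L_k-u_s)$ evaluates, on the residue sequence of $\t^\blam$, to $\prod_{1\le s<t\le\ell}\prod_{1\le j\le\lam_i^{(s)}}(j-i+u_s-u_t)$, which gives $\omega_{\t^\blam}=r_{\t^\blam}$. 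For the recursion, suppose $\s=\t(i,i+1)\rhd\t$; then, using $L_{i+1}=s_iL_is_i+s_i$ and the fact that $s_i$ carries the $\t$-weight space of $(L_1,\cdots,L_n)$ into the span of the $\t$- and $\s$-weight spaces, Young's seminormal form expresses $s_i{\rm f}_{\t\t}$ as a combination of ${\rm f}_{\t\t}$ and ${\rm f}_{\s\t}$ whose coefficients are controlled by the axial distance $c_\s(i)-c_\t(i)$ (this is where the degenerate analogue of Mathas's intertwining elements $\Phi_s$ enters); applying $\ast$, multiplying the resulting identities, and comparing with part~1) then yields $\frac{\omega_\t}{\omega_\s}=\frac{(1+c_\s(i)-c_\t(i))(c_\s(i)-c_\t(i)-1)}{(c_\s(i)-c_\t(i))^2}$, which is exactly Definition~\ref{gamma1b}(b). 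Since $r_\t$ is uniquely determined by these two properties, $\omega_\t=r_\t$ for every standard tableau $\t$, completing the proof. The main obstacle is precisely this last step --- carrying out the intertwiner computation with the normalisations handled carefully enough that the ratio $\omega_\t/\omega_\s$ comes out with the stated numerator and denominator; the non-degenerate prototype is \cite[Theorem~2.11]{Ma}, and essentially all of the genuine content of the lemma, together with the scope for parameter errors, resides there.
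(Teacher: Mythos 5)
The paper does not actually prove Lemma \ref{obobn2}: it is quoted from \cite[Proposition 3.4]{Ma2} (see also \cite[\S 6]{AMR}), and your outline is essentially the argument behind that citation --- triangularity of the Jucys--Murphy action on the cellular basis as in (\ref{L12}), the separation condition (\ref{separacontent1b}), the Lagrange-type idempotents ${\rm F}_\t$ of Definition \ref{Ftb}, unitriangularity of the change of basis, and then the identification of the structure constant $\omega_\t$ with $r_\t$ via the base case at $\t^\blam$ and the seminormal recursion (the degenerate analogue of \cite[Theorem 2.11]{Ma}, cf.\ Lemma \ref{seminorma2B}). So the route is the right one, and your assessment that the real content sits in the last step is accurate.

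There is, however, one inference that does not hold as you state it, and it recurs: you claim that semisimplicity of $H_{\ell,n}(\bu)$ together with the triangular action forces $\prod_{c\in C(k)}(L_k-c)=0$, and later that the Lagrange product attached to a residue sequence not realised by a standard tableau ``vanishes'' by triangularity. Semisimplicity of the algebra does not make an individual element act semisimply (matrix algebras are full of nilpotents), and triangularity by itself only shows these elements are nilpotent, not zero; since parts 2), 3), 4) and the completeness relation $\sum_\t{\rm F}_\t=1$ all rest on these vanishings in your logical order, this is a genuine gap as written. The standard repair is available with the tools you already invoke: use the two-sided triangular action --- (\ref{L12}) together with its left-handed analogue, obtained by applying $\ast$ since ${\rm m}_{\s\t}^*={\rm m}_{\t\s}$ and $L_k^*=L_k$ --- and the separation condition (\ref{separacontent1b}) to see that the commuting family of left and right multiplications by $L_1,\dots,L_n$ on $H_{\ell,n}(\bu)$ has pairwise distinct diagonal characters, one for each basis element ${\rm m}_{\s\t}$, so each joint generalized eigenspace is one-dimensional and the family acts semisimply; this yields $\prod_{c\in C(k)}(L_k-c)=0$, the exact eigenvector property ${\rm f}_{\s\t}L_k=c_\t(k){\rm f}_{\s\t}$, $L_k{\rm f}_{\s\t}=c_\s(k){\rm f}_{\s\t}$, and the orthogonality and completeness of the ${\rm F}_\t$, after which the rest of your argument (unitriangularity of $\{{\rm f}_{\s\t}\}$, primitivity, and $\omega_\t=r_\t$) goes through as you describe. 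Alternatively, follow \cite{Ma2} more closely and prove the eigenvector property of ${\rm f}_{\s\t}$ directly by a filtration induction from (\ref{L12}), before making any claim about minimal polynomials of the $L_k$.
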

We call $\{{\rm f}_{\s\t}\ |\ \s,\t \in \Std(\blam), \blam\in\P_n\}$ the {\bf seminormal basis} of $H_{\ell,n}(\bu)$ corresponding to the {\bf cellular basis} $\{{\rm m}_{\s\t}\ |\ \s,\t \in \Std(\blam), \blam\in\P_n\}$ of $H_{\ell,n}(\bu)$.


\bigskip

\section{The non-degenerate case}

In this section we shall only consider the non-degenerate cyclotomic Hecke algebra $\HH_{\ell,n}(q,\bQ)$. Our purpose is to give the proof of the main results Theorem \ref{mainthm1} and Theorem \ref{mainthm2}.
Throughout this section, we assume that $R=K$ is a field, $q\neq 1$ and (\ref{ss0}) holds. In particular, this implies that $\HH_{\ell,n}(q,\bQ)$ is (split) semisimple over $K$.

Let $\{\mathfrak{m}_{\s\t}|\s,\t\in\Std(\blam),\blam\in\P_n\}$ be the Dipper-James-Mathas cellular basis, and $\{\mff_{\s\t}|\s,\t\in\Std(\blam),\blam\in\P_n$ be the corresponding seminormal basis of $\HH_{\ell,n}(q,\bQ)$. For each $\lam\in\P_n$, we define $$
\HH_{\ell,n}^{\rhd\blam}:=\text{$K$-Span}\{\fm_{\s\t}|\s,\t\in\Std(\bmu), \blam\lhd\bmu\in\P_n\},
$$
which is a cell ideal of $\HH_{\ell,n}(q,\bQ)$ with respect to the cellular basis. For any $1\leq k\leq n$ and $\s,\t\in\Std(\blam)$, we have that (\cite[(2.3)]{Ma}) \begin{equation}\label{L1}
\fm_{\s\t}\mL_k=\res_{\t}(k)\fm_{\s\t}+\sum_{\substack{\v\in\Std(\blam)\\ \v\triangleright\t}} a_{\v}\fm_{\s\v}\pmod{\HH_{\ell,n}^{\rhd\blam}},
 \end{equation}
where $a_\v\in K$ for each $\t\lhd\v\in\Std(\blam)$.

Mathas has yet introduced in \cite[\S3]{Ma} another cellular basis which will be called {\bf the dual cellular basis} of $\HH_{\ell,n}(q,\bQ)$. We now recall his construction.

For each $\blam\in\P_n$, we define  \begin{equation}
\fn_{\t_\blam\t_\blam}:=\Bigl(\sum_{w\in\Sym_{\blam'}}(-q)^{-\ell(w)}T_w\Bigr)\Bigl(\prod_{s=2}^{\ell}\prod_{k=1}^{|\lam^{(\ell)}|+|\lam^{(\ell-1)}|+\cdots+|\lam^{(\ell-s+2)}|}(\mL_k-Q_{\ell-s+1})\Bigr).
\end{equation}
If $\t\in\Std(\blam)$, then we define $d'(\t)\in\Sym_n$ by $\t_\blam d'(\t)=\t$. For any  $\s,\t\in\Std(\blam)$, we set \begin{equation}\label{nst}
\fn_{\s\t}=(-q)^{-\ell(d'(\s))-\ell(d'(\t))}T_{d'(\s)}^*\fn_{\t_\blam\t_\blam}T_{d'(\t)}. \end{equation}

\begin{thm}\text{(\cite{Ma})} With respect to the opposite poset $(\P_n, \unlhd)$ and the anti-involution $\ast$, the set $\{\fn_{\s\t}|\s,\t\in\Std(\blam),\blam\in\P_n\}$ forms a cellular basis of $\HH_{\ell,n}(q,\bQ)$.
\end{thm}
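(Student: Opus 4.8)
The plan is to verify, with respect to the \emph{opposite} poset $(\P_n,\unlhd)$, the index sets $\Std(\blam)$ and the anti-involution $\ast$, the three defining conditions of a cellular basis in the sense of Graham--Lehrer. Put $\check{\HH}^{\lhd\blam}:=\text{$K$-Span}\{\fn_{\u\v}\mid\u,\v\in\Std(\bmu),\ \bmu\lhd\blam\}$. We must show: \textbf{(C1)} $\{\fn_{\s\t}\}$ is a $K$-basis of $\HH_{\ell,n}(q,\bQ)$; \textbf{(C2)} $\ast(\fn_{\s\t})=\fn_{\t\s}$ for all $\s,\t$; and \textbf{(C3)} for every $h\in\HH_{\ell,n}(q,\bQ)$, every $\blam\in\P_n$ and all $\s,\t\in\Std(\blam)$,
$$h\,\fn_{\s\t}\ \equiv\ \sum_{\u\in\Std(\blam)}r_\u(h)\,\fn_{\u\t}\ \pmod{\check{\HH}^{\lhd\blam}},$$
with scalars $r_\u(h)\in K$ independent of $\t$.

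I would dispatch \textbf{(C2)} first. By the definition \eqref{nst} of $\fn_{\s\t}$ and the fact that $\ast$ fixes every generator $T_i$ (whence $\ast(T_w)=T_{w^{-1}}$ and $\ast(T_{d'(\s)}^{\ast})=T_{d'(\s)}$), it is enough to prove that $\fn_{\t_\blam\t_\blam}$ is $\ast$-fixed. Factor $\fn_{\t_\blam\t_\blam}=y^-_{\blam'}\,\pi_\blam$, where $y^-_{\blam'}=\sum_{w\in\Sym_{\blam'}}(-q)^{-\ell(w)}T_w$ and $\pi_\blam=\prod_{s=2}^{\ell}\prod_{k=1}^{m_s}(\mL_k-Q_{\ell-s+1})$ with $m_s=|\lam^{(\ell)}|+\cdots+|\lam^{(\ell-s+2)}|$. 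Each $\mL_k$ is $\ast$-fixed (it is $q^{1-k}$ times a palindromic word in $T_0,\cdots,T_{k-1}$), so $\pi_\blam$ is $\ast$-fixed; and $y^-_{\blam'}$ is $\ast$-fixed because $\ell(w)=\ell(w^{-1})$. Hence $\ast(\fn_{\t_\blam\t_\blam})=\pi_\blam\,y^-_{\blam'}$, and it remains to see that $y^-_{\blam'}$ and $\pi_\blam$ commute. For this, observe that every factor $\prod_{k=1}^{m_s}(\mL_k-Q_{\ell-s+1})$ is symmetric in $\mL_1,\cdots,\mL_{m_s}$, hence commutes with $T_i$ for all $i\ne m_s$; and each $m_s$ is exactly a boundary point of the Young subgroup $\Sym_{\blam'}$, so $s_{m_s}\notin\Sym_{\blam'}$ and $\Sym_{\blam'}$ is generated by the $s_i$ with $i\notin\{m_2,\cdots,m_\ell\}$. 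Consequently every $T_w$ with $w\in\Sym_{\blam'}$ commutes with every factor of $\pi_\blam$, hence with $\pi_\blam$; this proves \textbf{(C2)}.

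For \textbf{(C1)} and \textbf{(C3)} I would relate the $\fn$-basis to the Dipper--James--Mathas basis $\{\fm_{\s\t}\}$ through conjugation of multipartitions, under which dominance is reversed ($\blam\unrhd\bmu\iff\bmu'\unrhd\blam'$ and $\s\unrhd\t\iff\t'\unrhd\s'$). The plan is to show that expressing $\fn_{\s\t}$ in the $\fm$-basis yields a transition matrix that is unitriangular with respect to (opposite) dominance after the relabelling $\fn_{\s\t}\leftrightarrow\fm_{\s'\t'}$, the latter being the cellular-basis element for the conjugate shape $\blam'$, built with the reversed cyclotomic parameters $(Q_\ell,\cdots,Q_1)$ for the same algebra. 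The crux is one computation: unfolding both definitions, one checks that $\fn_{\t_\blam\t_\blam}$ equals, modulo the more dominant cell ideal $\HH_{\ell,n}^{\rhd\blam'}$, an invertible scalar times that conjugate $\fm$-element --- matching $y^-_{\blam'}$ against $\sum_{w\in\Sym_{\blam'}}T_w$ via the passage from the ``sign'' to the ``trivial'' representation of $\HH_q(\Sym_{\blam'})$, and matching the $\mL_k$-factors. Granting such a change of basis, \textbf{(C1)} is immediate, and \textbf{(C3)} reduces to the corresponding property of $\{\fm_{\s\t}\}$ together with \eqref{L1} and \eqref{nst}, which between them control the action of a generating set of $\HH_{\ell,n}(q,\bQ)$ on the $\fn_{\s\t}$ modulo more dominant cell ideals. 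In the semisimple setting of this section one can bypass the combinatorics: expanding $\fn_{\t_\blam\t_\blam}$ in the seminormal basis (Lemma \ref{obobn}) and applying $(\mL_k-c)\mff_{\u\v}=(\res_\u(k)-c)\mff_{\u\v}$ and $\mff_{\u\v}(\mL_k-c)=(\res_\v(k)-c)\mff_{\u\v}$ to the factors of $\pi_\blam$ pins down the seminormal support of $\fn_{\t_\blam\t_\blam}$; then \eqref{nst} together with $\mff_{\s\t}\mff_{\u\v}=\delta_{\t\u}\gamma_\t\mff_{\s\v}$ gives \textbf{(C1)} and \textbf{(C3)} directly, with the poset necessarily the opposite dominance order.

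The step I expect to be the real obstacle is \textbf{(C1)}/\textbf{(C3)} in the general (non-semisimple) form: setting up the unitriangular dictionary between the $\fn$- and $\fm$-bases requires careful bookkeeping of the conjugation combinatorics of standard tableaux (the interplay of $d(\t)$, $d'(\t)$, $w_\blam$ and $d(\t')$) and control of the interaction of the signed symmetriser $y^-_{\blam'}$ with the $\mL_k$-factors modulo the more dominant cell ideals --- in effect, reproving that the ``dual'' form of the Dipper--James--Mathas construction is cellular. Everything else (the $\ast$-symmetry, the cardinality count $\sum_\blam|\Std(\blam)|^2=\dim_K\HH_{\ell,n}(q,\bQ)$, and, in the semisimple case, the reduction to the seminormal basis) is routine once the commutativity of $y^-_{\blam'}$ and $\pi_\blam$ established above is in hand.
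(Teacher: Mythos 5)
Your verification of the $\ast$-symmetry is correct and complete: each $\mL_k$ is a palindromic word in the generators, hence $\ast$-fixed; the signed sum $\sum_{w\in\Sym_{\blam'}}(-q)^{-\ell(w)}T_w$ is $\ast$-stable because $w\mapsto w^{-1}$ preserves $\Sym_{\blam'}$ and length; and your commutation argument (a symmetric polynomial in $\mL_1,\dots,\mL_{m_s}$ commutes with every $T_i$, $i\neq m_s$, while no simple reflection of $\Sym_{\blam'}$ equals $s_{m_s}$ since $m_s$ is a block boundary of $\blam'$) is the standard and correct one. The gap is that your (C1) and (C3) — which are the actual content of the theorem — are never established. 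The ``unitriangular dictionary'' between $\{\fn_{\s\t}\}$ and a conjugate-parameter Dipper--James--Mathas basis is precisely the assertion to be proved, and you acknowledge leaving it open. The semisimple bypass does not close the gap for two reasons: first, it is itself only a sketch — pinning down the seminormal support of $\fn_{\t_\blam\t_\blam}$ and then checking that the coefficients appearing in (C3) are independent of $\t$ is exactly where the work lies; second, the theorem as cited is a cellularity statement that does not require semisimplicity (it is what makes the cell ideals $\check{\HH}_{\ell,n}^{\lhd\blam}$ and the triangularity (\ref{L2}) meaningful), so an argument available only under (\ref{ss0}) would prove a strictly weaker statement.

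For comparison: the paper does not reprove this result but cites \cite{Ma}, and the efficient argument — which the paper in effect recalls immediately afterwards in Definition \ref{prime} and (\ref{2primes}) — is transport of structure rather than a change-of-basis computation. The ring involution $'$ (with $T_i'=-q^{-1}T_i$, $T_0'=T_0$, $q'=q^{-1}$, $Q_j'=Q_{\ell-j+1}$, taken over $\mathscr{A}=\Z[\hat q^{\pm1},\hat Q_1,\dots,\hat Q_\ell]$ and then specialised) commutes with $\ast$ and satisfies $\fm'_{\s\t}=\fn_{\s'\t'}$ by \cite[(3.2)]{Ma}; since $\blam\mapsto\blam'$ reverses the dominance order, the image of the cellular basis $\{\fm_{\s\t}\}$ under $'$ is automatically a cellular basis with respect to the opposite poset $(\P_n,\unlhd)$, giving (C1)--(C3) in one stroke, with the scalar $(-q)^{-\ell(d'(\s))-\ell(d'(\t))}$ in (\ref{nst}) exactly the bookkeeping needed to match $T'_{d(\s')}$ with $T_{d'(\s)}$. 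If you want a self-contained proof, the missing step is precisely the identity $\fm'_{\s\t}=\fn_{\s'\t'}$ (or, equivalently, redoing the Dipper--James--Mathas filtration argument for the signed symmetriser and the shifted $\mL_k$-factors); your (C2) computation is a correct but small piece of that.
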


We call it the {\bf dual cellular basis} of $\HH_{\ell,n}(q,\bQ)$. For each $\lam\in\P_n$, we define $$
\check{\HH}_{\ell,n}^{\lhd\blam}:=\text{$K$-Span}\{\fn_{\s\t}|\s,\t\in\Std(\bmu), \blam\rhd\bmu\in\P_n\},
$$
which is a cell ideal of $\HH_{\ell,n}(q,\bQ)$ with respect to the dual cellular basis. For any $1\leq k\leq n$ and $\s,\t\in\Std(\blam)$, we have that (\cite[Proposition 3.3]{Ma}) \begin{equation}\label{L2}
\fn_{\s\t}\mL_k=\res_{\t}(k)\fn_{\s\t}+\sum_{\substack{\v\in\Std(\blam)\\ \v\lhd\t}} b_{\v}\fn_{\s\v} \pmod{\check{\HH}_{\ell,n}^{\lhd\blam}}, \end{equation}
where $b_\v\in K$ for each $\t\rhd\v\in\Std(\blam)$.

%



\begin{dfn}\label{gdfn} Let $\blam\in\P_n$. For any $\s,\t\in\Std(\blam)$, we define $$\mfg_{\s\t}:=F_{\s}\fn_{\s\t}F_{\t}. $$
\end{dfn}

\begin{rem}\label{convention} Note that our notations $\fn_{\s\t}, \mfg_{\s\t}$ differ with the corresponding notations in \cite{Ma} by a conjugation and an invertible scalar. The elements $\fn_{\s\t}, g_{\s\t}$ in the current paper should be identified with the elements $n_{\s'\t'}, g_{\s'\t'}$ in \cite{Ma} up to some invertible scalar. In particular, our dual cellular basis $\{\fn_{\s\t}\}$ use the partial order ``$\unlhd$'', while \cite{Ma} use the partial order ``$\unrhd$'' for the dual cellular basis. Our convention for the notations $\fn_{\s\t}$ in this paper agrees with the one used in \cite[Section 3]{HuMathas:TAMS}.
\end{rem}

\begin{dfn}\text{(\cite[\S3]{Ma})}\label{prime} Suppose $\hat{q}, \hat{Q}_1,\cdots,\hat{Q}_\ell$ are indeterminates over $\Z$. Set $\mathscr{A}:=\Z[\hat{q}^{\pm 1}, \hat{Q}_1,\cdots,\hat{Q}_\ell]$. Let $\mathscr{K}:=\Q(\hat{q}, \hat{Q}_1,\cdots,\hat{Q}_\ell)$ be the rational functional field on $\hat{q}, \hat{Q}_1,\cdots,\hat{Q}_\ell$.
Let $\HH_{\ell,n}(\hat{q},\hat{\bQ})$ be the non-degenerate cyclotomic Hecke algebra of type $G(\ell,1,n)$ over $\mathscr{A}$ with Hecke parameter $\hat{q}$ and cyclotomic parameters $\hat{\bQ}:=(\hat{Q}_1,\cdots,\hat{Q}_\ell)$.
Set $\HH^{\mathscr{K}}_{\ell,n}(\hat{q},\hat{\bQ}):=\mathscr{K}\otimes_{\mathscr{A}}\HH_{\ell,n}(\hat{q},\hat{\bQ})$. Then $\HH^{\mathscr{K}}_{\ell,n}(\hat{q},\hat{\bQ})$ is split semisimple.
We set $'$ to be the unique ring involution  of $\HH_{\ell,n}(\hat{q},\hat{\bQ})$ (\cite[\S3]{Ma}) which is defined on generators by $$
{T}'_0:={T}_0,\,\, {T}'_i:=-\hat{q}^{-1}{T}_i,\,\, \hat{q}':=\hat{q}^{-1},\,\,\hat{Q}'_j:=\hat{Q}_{\ell-j+1},\quad 1\leq i<n,\, 1\leq j\leq\ell .
$$
\end{dfn}
Clearly, $'$ naturally extends to a ring involution  of $\HH^{\mathscr{K}}_{\ell,n}(\hat{q},\hat{\bQ})$. We have $\mL'_m=\mL_m$ for any $1\leq m\leq n$, and $\fm'_{\s\t}=\fn_{\s'\t'}$, $(\res_\t(k))'=\res_{\t'}(k)$ for any $1\leq k\leq n$ by \cite[(3.2)]{Ma}. It follows from Definition \ref{Ft} that \begin{equation}\label{2primes}
F'_\t=F_{\t'},\quad \mff'_{\s\t}=(F_{\s}\fm_{\s\t}F_{\t})'=F'_{\s}\fm'_{\s\t}F'_{\t}=F_{\s'}\fn_{\s'\t'}F_{\t'}=\mfg_{\s'\t'}.
\end{equation}
For any rational function $f$ on $\hat{q},\hat{Q}_1,\cdots,\hat{Q}_\ell$, we use $f'$ to denote the rational function obtained from $f$ by substituting  $\hat{q}$ and $\hat{Q}_i$ (for $1\leq i\leq\ell$) with $-\hat{q}^{-1}$ and $\hat{Q}_{\ell-i+1}$ respectively.
By Definition \ref{gamma1}, for each $\t\in\Std(\blam)$, the scalar $\gamma_\t$ is given by the evaluation of a rational function $\gamma_\t(\hat{q},\hat{Q}_1,\cdots,\hat{Q}_\ell)$ at $\hat{q}:=q,\hat{Q}_i:=Q_i, 1\leq i\leq\ell$. Thus the notation $$\gamma'_\t:=1_K\otimes_{\mathscr{A}}\gamma'_{\t}(\hat{q},\hat{Q}_1,\cdots,\hat{Q}_\ell)\in K^\times $$ does make sense.

Note that in general we have $\gamma'_\t\neq\gamma_{\t'}$. For example, if $\ell=1=Q_1$, $\lam=(2,1)$, $\t=\t^\lam s_2$, then $$
\gamma_\t=\frac{(q^2-q^{-1})(q-1)(1+q)}{(q-q^{-1})^2},\quad \gamma'_\t=\frac{(q^{-2}-q)(q^{-1}-1)(1+q^{-1})}{(q-q^{-1})^2}\neq\gamma_{\t'}=1+q .
$$

\begin{cor}\label{obobn2cor}
	Suppose $q\neq 1$, (\ref{ss0}) holds and $R=K$ is a field. Then \begin{equation}\label{dualsemi}\{\mfg_{\s\t}\ |\ \s,\t \in \Std(\blam), \blam\in\P_n\}\end{equation} is a basis of $\HH_{\ell,n}(q,\bQ)$. Moreover,
\begin{enumerate}
\item[1)] if $\s, \t, \u$ and $\v$ are standard tableaux, then $\mfg_{\s\t}\mfg_{\u\v}=\delta_{\t\u}\gamma'_{\t'}\mfg_{\s\v}$;
\item[2)] if $\blam\in\P_n$, $\s,\t\in\Std(\blam)$ and $1\leq k\leq n$, then $\mfg_{\s\t}\mL_k=\res_\t(k)\mfg_{\s\t}$, $\mL_k\mfg_{\s\t}=\res_\s(k)\mfg_{\s\t}$;
\item[3)] for each $\lam\in\P_n$ and $\t\in\Std(\blam)$, $F_{\t}=\mfg_{\t\t}/\gamma'_{\t'}$;
\end{enumerate}
\end{cor}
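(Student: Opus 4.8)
The plan is to deduce Corollary~\ref{obobn2cor} from Lemma~\ref{obobn} by transporting everything through the ring involution $'$ of Definition~\ref{prime}. The key observation, already recorded in \eqref{2primes}, is that $\mff'_{\s\t}=\mfg_{\s'\t'}$; equivalently, applying $'$ to the seminormal basis of $\HH^{\mathscr K}_{\ell,n}(\hat q,\hat\bQ)$ and then specializing produces exactly the elements $\mfg_{\s\t}$ up to the relabelling $\t\mapsto\t'$ (which is a bijection on standard tableaux since $\blam\mapsto\blam'$ is an involution on $\P_n$). Since $'$ is a ring isomorphism, it sends a basis to a basis, so \eqref{dualsemi} is a basis of $\HH_{\ell,n}(q,\bQ)$; this gives the first assertion.

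For part~1), I would start from the multiplication rule $\mff_{\s\t}\mff_{\u\v}=\delta_{\t\u}\gamma_\t\mff_{\s\v}$ of Lemma~\ref{obobn}(1), working over $\HH^{\mathscr K}_{\ell,n}(\hat q,\hat\bQ)$ where the $\gamma_\t$ are honest rational functions, and apply the ring involution $'$. Because $'$ reverses nothing (it is an ordinary ring homomorphism, not an anti-homomorphism), we get $\mff'_{\s\t}\mff'_{\u\v}=\delta_{\t\u}\gamma'_\t\,\mff'_{\s\v}$, i.e.\ $\mfg_{\s'\t'}\mfg_{\u'\v'}=\delta_{\t\u}\gamma'_\t\,\mfg_{\s'\v'}$. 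Now reindex by replacing $\s',\t',\u',\v'$ with $\s,\t,\u,\v$ (so the old tableaux become $\s',\t',\u',\v'$): this yields $\mfg_{\s\t}\mfg_{\u\v}=\delta_{\t'\u'}\gamma'_{\t'}\mfg_{\s\v}=\delta_{\t\u}\gamma'_{\t'}\mfg_{\s\v}$, using that $\t'=\u'\iff\t=\u$. That is exactly the claimed formula. Then specialize $\hat q\mapsto q$, $\hat Q_i\mapsto Q_i$; the specialization is legitimate because $\gamma'_{\t'}$ is, by the discussion preceding the corollary, the evaluation of a well-defined rational function, and it is invertible by the semisimplicity hypothesis (condition~\eqref{ss0}).

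Part~2) is the cleanest: the Jucys--Murphy operators satisfy $\mL'_m=\mL_m$ (noted right after Definition~\ref{prime}), and $(\res_\t(k))'=\res_{\t'}(k)$. Applying $'$ to the eigenvalue relations $\mff_{\s\t}\mL_k=\res_\t(k)\mff_{\s\t}$ and $\mL_k\mff_{\s\t}=\res_\s(k)\mff_{\s\t}$ from Lemma~\ref{obobn}(2) gives $\mfg_{\s'\t'}\mL_k=\res_{\t'}(k)\mfg_{\s'\t'}$ and $\mL_k\mfg_{\s'\t'}=\res_{\s'}(k)\mfg_{\s'\t'}$; reindexing $\s'\!\to\!\s$, $\t'\!\to\!\t$ yields $\mfg_{\s\t}\mL_k=\res_\t(k)\mfg_{\s\t}$ and $\mL_k\mfg_{\s\t}=\res_\s(k)\mfg_{\s\t}$. (Alternatively one can read these directly off Definition~\ref{gdfn} together with the fact that $F_\t$ is the spectral projection for the tuple $(\res_\t(1),\dots,\res_\t(n))$, via Lemma~\ref{obobn}(2)--(4).) For part~3), apply $'$ to $F_\t=\frac1{\gamma_\t}\mff_{\t\t}$ from Lemma~\ref{obobn}(3): since $F'_\t=F_{\t'}$ and $\mff'_{\t\t}=\mfg_{\t'\t'}$ we get $F_{\t'}=\frac1{\gamma'_\t}\mfg_{\t'\t'}$, and reindexing $\t'\to\t$ gives $F_\t=\mfg_{\t\t}/\gamma'_{\t'}$.

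I do not anticipate a genuine obstacle here; the only point requiring care is bookkeeping with the involution on tableaux. One must consistently track that $'$ acts on the label set $\Std(\blam)$ by $\t\mapsto\t'\in\Std(\blam')$, that this is a bijection, and that after applying $'$ one should rename the dummy tableaux so that the final formulas are stated for arbitrary $\s,\t\in\Std(\blam)$ rather than for $\s',\t'$. The other mild subtlety is justifying the specialization from $\HH^{\mathscr K}_{\ell,n}(\hat q,\hat\bQ)$ down to $\HH_{\ell,n}(q,\bQ)$: this is routine given that all the relevant seminormal-basis structure constants ($\gamma_\t$, $\gamma'_{\t'}$, $\res_\t(k)$) are specializations of elements of $\mathscr A$ or $\mathscr K$ that remain invertible under \eqref{ss0}, so I would simply invoke the standard base-change argument for seminormal bases (as in \cite{Ma}) rather than belabour it.
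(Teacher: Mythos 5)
Your proposal is correct and is essentially the paper's own argument: the paper proves the corollary in one line by combining the identity $\mff'_{\s\t}=\mfg_{\s'\t'}$ of (\ref{2primes}) with Lemma \ref{obobn}, which is exactly the transport-through-$'$ and relabel-by-conjugation procedure you spell out (including the facts $\mL'_k=\mL_k$, $F'_\t=F_{\t'}$, $(\res_\t(k))'=\res_{\t'}(k)$). Your extra care about reindexing $\t\mapsto\t'$ and about specializing from the generic parameters is sound bookkeeping that the paper leaves implicit.
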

\begin{proof} This follows from (\ref{2primes}) and Lemma \ref{obobn}.
\end{proof}

We call (\ref{dualsemi}) the {\bf dual seminormal basis} of $\HH_{\ell,n}(q,\bQ)$ corresponding to the dual cellular basis $\{\fn_{\s\t}|\s,\t\in\Std(\blam),\blam\in\P_n\}$.

\begin{lem}{\rm (\cite[Remark 3.6]{Ma})}\label{cor2} Let $\blam$ be a multipartition of $n$ and $\s,\t\in \Std(\blam)$. Suppose $q\neq 1$, (\ref{ss0}) holds and $R=K$ is a field. Then
	\begin{enumerate}
		\item[1)] For any standard tableau $\t$, we have
		$$\mfg_{\t'\t'}=\mff'_{\t\t}=\gamma_{\t}'F_{\t'}=\frac{\gamma_{\t}'}{\gamma_{\t'}}\mff_{\t'\t'}.$$
		\item[2)] There exists a unique scalar $\alpha_{\s\t}\in K^\times$ such that $\mfg_{\s\t}=\alpha_{\s\t}\mff_{\s\t}$. Moreover, $\alpha_{\s\t}^2=\gamma_{\s'}'\gamma_{\t'}'/\gamma_{\s}\gamma_{\t}$.
	\end{enumerate}
\end{lem}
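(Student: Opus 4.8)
The plan is to deduce everything from the two facts already established: the identity $\mff'_{\s\t}=\mfg_{\s'\t'}$ from (\ref{2primes}), and the multiplicative/idempotent structure of the seminormal basis recorded in Lemma \ref{obobn}. For part 1), I would start from $F_\t=\frac{1}{\gamma_\t}\mff_{\t\t}$ (Lemma \ref{obobn}(3)). Applying the ring involution $'$ to this relation and using $\mff'_{\t\t}=\mfg_{\t'\t'}$ together with $(\gamma_\t)'=\gamma'_\t$ and $F'_\t=F_{\t'}$ gives $F_{\t'}=\frac{1}{\gamma'_\t}\mfg_{\t'\t'}$, i.e. $\mfg_{\t'\t'}=\gamma'_\t F_{\t'}$. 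On the other hand, $F_{\t'}=\frac{1}{\gamma_{\t'}}\mff_{\t'\t'}$ again by Lemma \ref{obobn}(3) applied directly to the tableau $\t'$, so $\mfg_{\t'\t'}=\frac{\gamma'_\t}{\gamma_{\t'}}\mff_{\t'\t'}$. Relabelling $\t'$ as the running tableau (equivalently, since $\t\mapsto\t'$ is a bijection on standard tableaux) yields the displayed chain of equalities; the middle term $\mff'_{\t\t}$ is just (\ref{2primes}) specialised to $\s=\t$.

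For part 2), existence and uniqueness of $\alpha_{\s\t}\in K^\times$ is immediate: both $\{\mff_{\s\t}\}$ and $\{\mfg_{\s\t}\}$ are bases of $\HH_{\ell,n}(q,\bQ)$ (Lemma \ref{obobn} and Corollary \ref{obobn2cor}), and by parts 2) of those results both $\mff_{\s\t}$ and $\mfg_{\s\t}$ are simultaneous eigenvectors for left and right multiplication by every $\mL_k$ with the same eigenvalues $\res_\s(k),\res_\t(k)$; since (\ref{ss0}) is equivalent to the separation of contents (\ref{separacontent1}), the joint eigenspace is one-dimensional, so $\mfg_{\s\t}$ and $\mff_{\s\t}$ are proportional, and the scalar is invertible because $\mfg_{\s\t}\neq 0$. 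To pin down $\alpha_{\s\t}^2$, I would compute the product $\mfg_{\s\t}\mfg_{\t'\t'}$ in two ways. Note first that $(\s\t)' = (\t'\s')$ in the sense that applying $'$ to $\mff_{\t\s}$ gives $\mfg_{\t'\s'}$; more usefully, apply $'$ to the relation $\mff_{\s\t}=\alpha_{\s\t}^{-1}\mfg_{\s\t}$ — wait, better: apply $'$ to $\mfg_{\s\t}=\alpha_{\s\t}\mff_{\s\t}$ directly. Since $'$ sends $\mfg_{\s\t}=F_\s\fn_{\s\t}F_\t$ to... here one must be slightly careful because $'$ is a ring involution, not the anti-involution $\ast$; the clean route is instead the following.

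The decisive computation is to evaluate $\mff_{\s\s}\,\mfg_{\s\t}\,\mff_{\t\t}$. Using Corollary \ref{obobn2cor}(2) (the $\mL_k$-eigenvalue statement for $\mfg$) one sees $\mfg_{\s\t}$ lies in $F_\s\HH F_\t$, hence $\mff_{\s\s}\mfg_{\s\t}\mff_{\t\t}=\gamma_\s\gamma_\t\,F_\s\mfg_{\s\t}F_\t=\gamma_\s\gamma_\t\,\mfg_{\s\t}$, while writing $\mfg_{\s\t}=\alpha_{\s\t}\mff_{\s\t}$ and using Lemma \ref{obobn}(1) gives $\mff_{\s\s}\mfg_{\s\t}\mff_{\t\t}=\alpha_{\s\t}\gamma_\s\gamma_\t\mff_{\s\t}$ — consistent but not yet enough. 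To extract the square, I would instead multiply $\mfg_{\s\t}$ by $\mfg_{\t\s}$: by Corollary \ref{obobn2cor}(1), $\mfg_{\s\t}\mfg_{\t\s}=\gamma'_{\t'}\mfg_{\s\s}$, whereas substituting $\mfg_{\s\t}=\alpha_{\s\t}\mff_{\s\t}$, $\mfg_{\t\s}=\alpha_{\t\s}\mff_{\t\s}$ and using Lemma \ref{obobn}(1) gives $\alpha_{\s\t}\alpha_{\t\s}\gamma_\t\mff_{\s\s}$; comparing, and using $\mfg_{\s\s}=\frac{\gamma'_{\s'}}{\gamma_{\s'}}\mff_{\s\s}$... hmm, this needs $\mfg_{\s\s}$ in terms of $\mff_{\s\s}$, which from part 1) (with $\t$ replaced by $\s'$, noting $(\s')'=\s$) is $\mfg_{\s\s}=\frac{\gamma'_{\s'}}{\gamma_{\s}}\mff_{\s\s}$. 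Substituting yields $\alpha_{\s\t}\alpha_{\t\s}\gamma_\t=\gamma'_{\t'}\cdot\frac{\gamma'_{\s'}}{\gamma_\s}$, i.e. $\alpha_{\s\t}\alpha_{\t\s}=\frac{\gamma'_{\s'}\gamma'_{\t'}}{\gamma_\s\gamma_\t}$. It therefore remains only to show $\alpha_{\s\t}=\alpha_{\t\s}$; this follows by applying the anti-involution $\ast$ to $\mfg_{\s\t}=\alpha_{\s\t}\mff_{\s\t}$, since $\mff_{\s\t}^\ast=\mff_{\t\s}$ and $\mfg_{\s\t}^\ast=\mfg_{\t\s}$ (both $\{\mff\}$ and $\{\mfg\}$ are cellular-type bases compatible with $\ast$, as $\mff_{\s\t}=F_\s\fm_{\s\t}F_\t$ with $\fm_{\s\t}^\ast=\fm_{\t\s}$ and $F_\t^\ast=F_\t$, and similarly $\fn_{\s\t}^\ast=\fn_{\t\s}$ from (\ref{nst})). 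Hence $\alpha_{\s\t}^2=\gamma'_{\s'}\gamma'_{\t'}/(\gamma_\s\gamma_\t)$, as claimed.

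The main obstacle I anticipate is bookkeeping with the two involutions ($'$ ring-involution versus $\ast$ anti-involution) and not conflating $\gamma'_\t$ with $\gamma_{\t'}$ — the explicit $\ell=1$ example in the excerpt warns that these genuinely differ — so every appearance of a primed $\gamma$ must be tracked as "evaluate the rational function $\gamma_\bullet$ with the substitution $\hat q\mapsto-\hat q^{-1}$, $\hat Q_i\mapsto\hat Q_{\ell-i+1}$" and not as "$\gamma$ of the conjugate tableau". Everything else is routine manipulation inside the semisimple algebra using the orthogonality and eigenvalue properties already in hand.
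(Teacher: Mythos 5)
Your proposal is correct and follows essentially the same route as the paper: part 1) by applying the involution $'$ to $F_\t=\mff_{\t\t}/\gamma_\t$ together with (\ref{2primes}), and part 2) by establishing $\alpha_{\s\t}\in K^\times$ via the $\mL_k$-eigenvalue separation, proving $\alpha_{\s\t}=\alpha_{\t\s}$ with the anti-involution $\ast$, and computing $\mfg_{\s\t}\mfg_{\t\s}$ in two ways using Corollary \ref{obobn2cor}(1), Lemma \ref{obobn}(1) and part 1). Your care in distinguishing $\gamma'_\t$ from $\gamma_{\t'}$ and in reducing to the generic (indeterminate) parameters before applying $'$ is exactly the bookkeeping the paper relies on.
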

\begin{proof} Part 1) follows from (\ref{2primes}), Corollary \ref{obobn2cor} 3) and Definition \ref{gdfn}. For Part 2), on the one hand, combining Lemma \ref{obobn} 2), Corollary \ref{obobn2cor} 2) with (\ref{separacontent1}), we can deduce that $\alpha_{\s\t}:=\mfg_{\s\t}/\mff_{\s\t}\in K^\times$.
On the other hand, applying the anti-involution ``$*$'', we can get that $\mfg_{\t\s}=\alpha_{\s\t}\mff_{\t\s}$ and hence $\alpha_{\s\t}=\alpha_{\t\s}$. Therefore, $$
\gamma'_{\t'}\mfg_{\s\s}=\mfg_{\s\t}g_{\t\s}=\alpha_{\s\t}^2\mff_{\s'\t'}\mff_{\t'\s'}=\alpha_{\s\t}^2\gamma_{\t'} \mff_{\s'\s'}.
$$
By 1) we have $\mfg_{\s\s}/ \mff_{\s'\s'}=\gamma'_{\s'}/\gamma_{\s}$. Hence Part 2) of the lemma follows.
\end{proof}

\begin{rem} Suppose that $q, Q_1,\cdots,Q_\ell$ are indeterminates over $\Z$. Then by (\ref{2primes}) we have \begin{equation}\label{fprimest}
\mff'_{\s\t}=\mfg_{\s'\t'}=\alpha_{\s'\t'}\mff_{\s'\t'},
\end{equation}
for any $\s,\t\in\Std(\blam)$ and $\blam\in\P_n$. Note that the scalar $\alpha_{\s\t}$ in our paper should be identified with the scalar $\alpha_{\s'\t'}$ in the notation of \cite{Ma}. In view of our convention of notations, we have that $\mfg_{\s\t}=\alpha_{\s\t}\mff_{\s\t}$, while in view of the convention of notations in \cite{Ma}, we have $\mfg_{\s\t}=\alpha_{\s\t}\mff_{\s'\t'}$. It follows from Lemma \ref{cor2} that $\gamma_{\s'}'\gamma_{\t'}'/\gamma_{\s}\gamma_{\t}$ always has a square root in $K^\times$ which is a rational function on $q,Q_1,\cdots,Q_\ell$. In \cite[Remark 3.6]{Ma} Mathas has asked whether one can give an intrinsic explanation of this fact and in particular determine the sign of each scalar $\alpha_{\s\t}$. In this paper will present some explicit combinatorial formulae for these scalars $\alpha_{\s\t}$ as some rational functions on $q,Q_1,\cdots,Q_\ell$ and affirmatively answer Mathas's above question.
\end{rem}

For the reader's convenience, we include below a lemma which gives a recursive formula for the $\gamma'$-coefficients associated to the dual seminormal bases.

\begin{lem}
	 Suppose $q\neq 1$ and (\ref{ss0}) holds. Let $\blam\in\P_n$. The coefficients of the dual seminormal basis $\{\mfg_{\s\t}\ |\ \s,\t \in \Std(\blam), \blam\in\P_n\}$ can be uniquely determined by:
	\begin{enumerate}
		\item $\gamma'_{(\t_\blam)'}=\gamma'_{\t^{\blam'}}=q^{C}[\blam']^!_q\prod\limits_{1\leq t<s\leq\ell}\prod\limits_{1\leq j\leq\lam_i^{(s)}}(q^{j-i}Q_s-Q_t)$, where $C=-\sum\limits_{c=1}^\ell\sum\limits_{i\geq1}\frac{(\lambda^{(c)'})_i((\lambda^{(c)'})_i-1)}{2}$; and
		\item if $\s=\t(i,i+1)\triangleleft\t$ then $$ \frac{\gamma'_{\t'}}{\gamma'_{\s'}}=q^{-2}\frac{(q\res_{\s}(i)-\res_{\t}(i))(\res_{\s}(i)-q\res_{\t}(i))}{(\res_{\t}(i)-\res_{\s}(i))^2}=q^{-2}\frac{\gamma_{\s}}{\gamma_{\t}}. $$
	\end{enumerate}
\end{lem}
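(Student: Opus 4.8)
The plan is to check that the scalars $\gamma'_\u$ — which by definition are the values at $\hat q=q$, $\hat Q_i=Q_i$ of the rational functions obtained from $\gamma_\u(\hat q,\hat{\bQ})$ by the operation $'$, so that $\gamma'_\u=(\gamma_\u)'$ — satisfy the two displayed formulas, and then to observe that (1) and (2) determine them uniquely. Throughout I will use only the recursion for the ordinary $\gamma$-coefficients (Definition~\ref{gamma1}), the fact that $'$ sends $q\mapsto q^{-1}$ on $q$-polynomials and $\res_\u(k)\mapsto\res_{\u'}(k)$ (see the discussion around~(\ref{2primes}) and the example following Definition~\ref{prime}), and the standard fact that conjugation of standard tableaux reverses the dominance order.

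I would treat (2) first, as it is the shorter of the two. Let $\s,\t\in\Std(\blam)$ with $\s=\t(i,i+1)\lhd\t$. Conjugating, $\s'=\t'(i,i+1)$, and by order reversal $\s'\rhd\t'$, so $\s'=\t'(i,i+1)\rhd\t'$ in $\Std(\blam')$. Applying Definition~\ref{gamma1}(2) to the pair $(\s',\t')$ gives a formula for $\gamma_{\t'}/\gamma_{\s'}$ in terms of $\res_{\s'}(i)$ and $\res_{\t'}(i)$; applying $'$ replaces these residues by $\res_\s(i)$ and $\res_\t(i)$ and $q$ by $q^{-1}$. Since $(q^{-1}a-b)(a-q^{-1}b)=q^{-2}(qa-b)(a-qb)$ and $(qa-b)(a-qb)=(qb-a)(b-qa)$, the outcome equals $q^{-2}\dfrac{(q\res_\s(i)-\res_\t(i))(\res_\s(i)-q\res_\t(i))}{(\res_\t(i)-\res_\s(i))^2}$; and applying Definition~\ref{gamma1}(2) once more, this time to the pair $(\t,\s)$ (note $\t=\s(i,i+1)\rhd\s$), rewrites it as $q^{-2}\gamma_\s/\gamma_\t$. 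This is (2).

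For the base value (1), note $(\t_\blam)'=\t^{\blam'}$, so what is wanted is $(\gamma_{\t^{\blam'}})'$, and $\gamma_{\t^{\blam'}}(\hat q,\hat{\bQ})$ is given by Definition~\ref{gamma1}(1) for the multipartition $\blam'$. Applying $'$ there are two ingredients. The $q$-factorial prefactor $[\blam']^!_{\hat q}$: using $[k]_{q^{-1}}=q^{1-k}[k]_q$, hence $[k]^!_{q^{-1}}=q^{-k(k-1)/2}[k]^!_q$, and summing the exponents over the columns of all the $\lam^{(c)}$ (the parts of $\lam^{(c)'}$ being the column lengths of $\lam^{(c)}$) gives exactly $q^C[\blam']^!_q$ with $C=-\sum_c\sum_i\frac{(\lam^{(c)'})_i((\lam^{(c)'})_i-1)}{2}$. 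The product of factors $\hat q^{j-i}\hat Q_s-\hat Q_t$: since the $s$-th component of $\blam'$ is $\lam^{(\ell-s+1)'}$, transposing each Young diagram sends the cell $(i,j)$ to $(j,i)$ and hence $\hat q^{j-i}$ to $\hat q^{-(j-i)}$, which $\hat q\mapsto\hat q^{-1}$ turns back into $\hat q^{j-i}$; combining this with the relabelling $s\leftrightarrow\ell-s+1$, $t\leftrightarrow\ell-t+1$, which interchanges $s<t$ with $t<s$ and sends $\hat Q_s\mapsto Q_{\ell-s+1}$ suitably, the product becomes $\prod_{1\le t<s\le\ell}\prod_{1\le j\le\lam^{(s)}_i}(q^{j-i}Q_s-Q_t)$, as claimed. (Alternatively $\gamma'_{\t^{\blam'}}$ could be extracted from $\mfg_{\t_\blam\t_\blam}=\gamma'_{\t^{\blam'}}F_{\t_\blam}$ — Corollary~\ref{obobn2cor} — by expanding the explicit product $F_{\t_\blam}\fn_{\t_\blam\t_\blam}F_{\t_\blam}$, but the route above is shorter.)

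Finally, (1) fixes $\gamma'_{\t^{\blam'}}$, the coefficient at the most dominant tableau of shape $\blam'$, and (2) fixes every ratio $\gamma'_{\t'}/\gamma'_{\s'}$ for tableaux of shape $\blam'$ differing by an adjacent transposition and comparable in dominance; since any standard tableau of a given shape is joined to the most dominant one by a chain of such moves, all $\gamma'$-coefficients are determined, which gives the uniqueness assertion. I expect the only genuinely delicate step to be the simultaneous bookkeeping in (1) — transposing the diagrams $(i,j)\mapsto(j,i)$ while reversing the component index $s\mapsto\ell-s+1$ — and, in particular, verifying that the exponent $C$ of $q$ and the orientation $q^{j-i}Q_s-Q_t$ of each factor come out with exactly the signs in the statement; the rest is routine manipulation of $q$-integers together with Definition~\ref{gamma1} and~(\ref{2primes}).
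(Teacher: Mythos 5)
Your argument is correct and follows essentially the same route as the paper, whose entire proof is the one-liner that the lemma "follows from Definition \ref{prime} and the equality $(\res_{\t}(k))'=\res_{\t'}(k)$": you simply make that explicit by applying the involution $'$ to the recursion of Definition \ref{gamma1}, doing the $[k]_{q^{-1}}=q^{1-k}[k]_q$ bookkeeping and the relabelling $(i,j)\mapsto(j,i)$, $s\mapsto\ell-s+1$ for the base case, and using dominance reversal under conjugation for the recursive step. Your reading of the scalar action as $q\mapsto q^{-1}$ (rather than the stray $-\hat{q}^{-1}$ in the sentence following Definition \ref{prime}) is the one consistent with that definition, with $(\res_{\t}(k))'=\res_{\t'}(k)$, and with the worked example, so there is no gap.
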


\begin{proof} This follows from Definition \ref{prime} and the equality $(\res_{\t}(k))'=\res_{\t'}(k)$.
\end{proof}

\begin{lem}{\rm (\cite[Proposition 2.7]{Ma})}\label{seminormalB} Suppose $q\neq 1$, (\ref{ss0}) holds and $R=K$ is a field. Let $\blam\in\P_n$ and $\s,\u\in\Std(\blam)$. Let $i$ be an integer with $1\leq i< n$ and $\t:=\s(i,i+1)$. If $\t$ is standard then
	$$\mff_{\u\s}T_i=\begin{cases*}
		A_i(\s)\mff_{\u\s}+\mff_{\u\t}, &\text{if $\t\triangleleft\s$},\\
		A_i(\s)\mff_{\u\s}+B_i(\s)\mff_{\u\t}, &\text{if $\s\triangleleft\t$},
	\end{cases*}$$
where $$
A_i(\s)=\frac{(q-1)\res_\s(i+1)}{\res_\s(i+1)-\res_\s(i)},\quad B_i(\s):=\frac{\gamma_\s}{\gamma_\t}=\frac{(q\res_{\s}(i)-\res_{\s}(i+1))(\res_{\s}(i)-q\res_{\s}(i+1))}{(\res_{\s}(i+1)-\res_{\s}(i))^2}.
$$
If $\t$ is not standard then
	$$\mff_{\u\s}T_i=\begin{cases*}
		q\mff_{\u\s}, &\text{if $i$ and $i+1$ are in the same row of $\s$},\\
		-\mff_{\u\s}, &\text{if $i$ and $i+1$ are in the same column of $\s$}.
	\end{cases*}$$
\end{lem}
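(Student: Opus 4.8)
The plan is to compute the expansion of $\mff_{\u\s}T_i$ in the seminormal basis in two stages --- first showing that only $\mff_{\u\s}$ and $\mff_{\u\t}$ with $\t:=\s(i,i+1)$ can appear, then reading off the coefficients --- and to settle the one genuinely delicate point, the normalisation of the $\mff_{\u\t}$-coefficient, afterwards by bringing in the Dipper--James--Mathas cellular basis $\{\fm_{\s\t}\}$.

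For the first stage, I would observe that $\mff_{\u\s}=F_{\u}\fm_{\u\s}F_{\s}$ lies in the right ideal $F_{\u}\HH_{\ell,n}(q,\bQ)$, which by Lemma~\ref{obobn} has $K$-basis $\{\mff_{\u\v}\mid\v\in\Std(\blam)\}$ with $\blam=\Shape(\u)=\Shape(\s)$; hence $\mff_{\u\s}T_i=\sum_{\v\in\Std(\blam)}c_{\v}\mff_{\u\v}$ for some $c_{\v}\in K$. Now $T_i$ commutes with $\mL_k$ for $k\neq i,i+1$, and --- using the defining relation $\mL_{i+1}=q^{-1}T_i\mL_iT_i$ and the quadratic relation $T_i^2=(q-1)T_i+q$ --- also with $\mL_i+\mL_{i+1}$ and with $\mL_i\mL_{i+1}$. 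Multiplying the identity $\mff_{\u\s}T_i=\sum_{\v}c_{\v}\mff_{\u\v}$ on the right by each of these elements and comparing eigenvalues via Lemma~\ref{obobn}(2), any $\v$ with $c_{\v}\neq0$ must satisfy $\res_{\v}(k)=\res_{\s}(k)$ for all $k\neq i,i+1$ and $\{\res_{\v}(i),\res_{\v}(i+1)\}=\{\res_{\s}(i),\res_{\s}(i+1)\}$ as multisets. Using the separation property (\ref{separacontent1}) one checks first that $\res_{\s}(i)\neq\res_{\s}(i+1)$ --- if $i,i+1$ lie in the same row or column of $\s$ the two residues differ by a factor $q^{\pm1}\neq1$, and otherwise $\s(i,i+1)$ is a standard tableau different from $\s$ and hence cannot share its residue sequence --- and then, using that under (\ref{ss0}) a Young diagram has at most one addable node of a given residue, that the only such $\v$ are $\s$ and, when it is standard, $\t=\s(i,i+1)$. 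This gives $\mff_{\u\s}T_i=c_{\s}\mff_{\u\s}+c_{\t}\mff_{\u\t}$, the second term being absent when $\t$ is not standard.

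For the coefficients, I would also write $\mff_{\u\t}T_i=c'_{\t}\mff_{\u\t}+c'_{\s}\mff_{\u\s}$ (with $c'_{\s}=0$ if $\t$ is not standard). The quadratic relation gives $c_{\s}+c'_{\t}=q-1$ and $c_{\s}^2+c_{\t}c'_{\s}=(q-1)c_{\s}+q$, while the computation $\res_{\s}(i+1)\mff_{\u\s}=\mff_{\u\s}\mL_{i+1}=q^{-1}\mff_{\u\s}T_i\mL_iT_i$ together with $\res_{\t}(i)=\res_{\s}(i+1)$ gives $c_{\s}\res_{\s}(i)+c'_{\t}\res_{\s}(i+1)=0$; solving these, $c_{\s}=A_i(\s)$ and $c_{\t}c'_{\s}=B_i(\s)$, where the identity $B_i(\s)=\gamma_{\s}/\gamma_{\t}$ is exactly Definition~\ref{gamma1}(2). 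When $\t$ is not standard, $c_{\t}=0$, the $\mL_{i+1}$-relation reduces to $c_{\s}^2=q\,\res_{\s}(i+1)/\res_{\s}(i)$, and the quadratic relation forces $c_{\s}\in\{q,-1\}$; since (\ref{ss0}) rules out $q=\pm1$ for $n\geq2$, comparing the two shows $c_{\s}=q$ when $i,i+1$ are in the same row of $\s$ (residue ratio $q$) and $c_{\s}=-1$ when they are in the same column (ratio $q^{-1}$).

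It then remains, when $\t$ is standard, to show that $c_{\t}=1$ in the case $\t\lhd\s$; the case $\s\lhd\t$ follows on applying this to $\t$ in place of $\s$, since $B_i(\t)=B_i(\s)$. When $\t=\s(i,i+1)\lhd\s$ one has $\ell(d(\t))=\ell(d(\s))+1$ and $d(\t)=d(\s)s_i$, so $T_{d(\t)}=T_{d(\s)}T_i$ and Definition~\ref{cellularbases1} yields $\fm_{\u\t}=\fm_{\u\s}T_i$ outright. I would then expand both sides in the seminormal basis using the standard fact (this follows from (\ref{L1})) that $\fm_{\u\s}-\mff_{\u\s}$ and $\fm_{\u\t}-\mff_{\u\t}$ are linear combinations of seminormal basis elements $\mff_{\u\v}$ with $\v\rhd\s$ resp.\ $\v\rhd\t$, together with elements whose first tableau strictly dominates $\u$ or whose shape strictly dominates $\blam$; since right multiplication by $T_i$ preserves the cell filtration, fixes the first tableau index, and by the formula just obtained changes the second tableau only to its image under $(i,i+1)$ (up to scalars), a term $\mff_{\u\t}$ can arise on the right of $\fm_{\u\t}=\fm_{\u\s}T_i$ only from the summand $c_{\t}\mff_{\u\t}$ of $\mff_{\u\s}T_i$ --- because $\v\rhd\s$ gives $\v\neq\t$ and $\v(i,i+1)\neq\t$ --- whereas the coefficient of $\mff_{\u\t}$ on the left is $1$; hence $c_{\t}=1$. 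I expect this last step to be the main obstacle: everything before it is elementary manipulation with the defining relations and the eigenvalue and orthogonality properties of the seminormal basis, but pinning down the constant $c_{\t}$ genuinely requires both the explicit form of the Dipper--James--Mathas basis and the triangularity of the change of basis.
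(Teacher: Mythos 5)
Your proposal is correct, but be aware that the paper offers no proof of this statement at all: it is imported verbatim from \cite[Proposition 2.7]{Ma}, so the only comparison to make is with the standard argument of Murphy--Mathas, which your reconstruction essentially follows (eigenvalue/separation argument to confine $\mff_{\u\s}T_i$ to the span of $\mff_{\u\s}$ and $\mff_{\u\t}$, the quadratic relation together with $\mL_{i+1}=q^{-1}T_i\mL_iT_i$ to solve for the coefficients, and the unitriangular change of basis with the Murphy basis, via $\fm_{\u\t}=\fm_{\u\s}T_i$ when $\ell(d(\t))=\ell(d(\s))+1$, to fix the normalisation $c_\t=1$). Two small points. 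First, the relations $c_\s+c'_\t=q-1$ and $c_\s\res_\s(i)+c'_\t\res_\s(i+1)=0$ that you read off both occur with an overall factor $c_\t$ (or, in the mirrored computation, $c'_\s$), so as written they determine $c_\s=A_i(\s)$ only once one knows $c_\t\neq 0$ (resp.\ $c'_\s\neq 0$); this non-vanishing is exactly what your final normalisation step provides, since it gives $c_\t=1$ when $\t\lhd\s$ and then $c'_\s=1$ in the case $\s\lhd\t$, after which the unconditional equation $c_\s^2+c_\t c'_\s=(q-1)c_\s+q$ yields $c_\t=B_i(\s)$ --- so the argument does close, but the normalisation step should logically precede the solving of the linear system rather than be presented as an afterthought. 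Second, your stage-one claim that no standard $\v$ carries the swapped residue sequence when $i$ and $i+1$ are row- or column-adjacent is stated rather tersely; it needs, besides uniqueness of addable nodes of a given residue under (\ref{ss0}), the matching uniqueness of removable nodes (or separation at the intermediate levels $i-1$ and $i+1$) to force $\Shape(\v\downarrow_{i\pm1})=\Shape(\s\downarrow_{i\pm1})$ and hence $\v=\s$. Both are routine under (\ref{ss0}), so neither point is a genuine gap, only a matter of ordering and of spelling out a standard combinatorial fact.
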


Let $\HH_q(\Sym_n)$ be the Iwahori-Hecke algebra of the symmetric group $\Sym_n$, which can be identified with the $K$-subalgebra of $\HH_{\ell,n}(q,\bQ)$ generated by $T_1,\cdots,T_{n-1}$.

\begin{lem}{\rm (\cite{Mur3}, \cite[Proposition 4.1, Lemma 4.3]{Ma})}\label{M5prop}
	Suppose $q\neq 1$, (\ref{ss0}) holds and $R=K$ is a field. Let $\blam\in\P_n$ and $i$ an integer with $1\leq i<n$. Then there exist invertible elements $\{\Phi_{\t}|\t\in\Std(\blam)\}$ in $\HH_q(\Sym_n)$ such that
	\begin{enumerate}
		\item[(i)] for any $\s,\t\in\Std(\blam)$, $\mff_{\s\t}=\Phi_\s^*\mff_{\t^\blam\t^\blam}\Phi_\t$;
		\item[(ii)] $\Phi_{\t^\blam}=1$, and if $\s:=\t(i,i+1)\lhd\t$, then $$\Phi_\s=\Phi_\t(T_i-A_i(\t)).
$$
	\end{enumerate}
\end{lem}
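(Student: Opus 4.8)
The plan is to follow Murphy \cite{Mur3} and Mathas \cite[Proposition 4.1, Lemma 4.3]{Ma}, building the $\Phi_\t$ recursively along descending chains of tableaux and extracting everything from Lemma \ref{seminormalB}. Fix $\blam\in\P_n$. I would use that $\t^\blam$ is the $\unrhd$-maximum of $\Std(\blam)$ and that every $\t\in\Std(\blam)$ is connected to $\t^\blam$ by a chain $\t^\blam=\t_0\rhd\t_1\rhd\cdots\rhd\t_k=\t$ with $\t_{j+1}=\t_j(i_j,i_j+1)$; such chains realize the reduced expressions of $d(\t)$. Given one such chain I would set $\Phi_{\t^\blam}:=1$ and
\[
\Phi_\t:=\bigl(T_{i_0}-A_{i_0}(\t_0)\bigr)\bigl(T_{i_1}-A_{i_1}(\t_1)\bigr)\cdots\bigl(T_{i_{k-1}}-A_{i_{k-1}}(\t_{k-1})\bigr)\in\HH_q(\Sym_n),
\]
postponing the proof that this does not depend on the chain. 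Each factor $T_i-A_i(\u)$ occurring here has $\u(i,i+1)$ standard, hence $B_i(\u)=\gamma_\u/\gamma_{\u(i,i+1)}\in K^\times$ by Lemma \ref{seminormalB} and Lemma \ref{obobn}, which forces $\res_\u(i+1)\ne q\res_\u(i)$ and $\res_\u(i)\ne q\res_\u(i+1)$; these are precisely the inequalities excluding $A_i(\u)\in\{q,-1\}$, and since $(T_i-q)(T_i+1)=0$ with $q\ne-1$ (as $1+q\in K^\times$ by (\ref{ss0})), the element $T_i-A_i(\u)$ is a unit. Thus each $\Phi_\t$ is invertible.

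Next I would prove, by downward induction on $\unrhd$, the identity $\mff_{\t^\blam\t}=\mff_{\t^\blam\t^\blam}\Phi_\t$ for $\Phi_\t$ built from any chain. The case $\t=\t^\blam$ is trivial; if $\s=\t(i,i+1)\lhd\t$ is the last step of a chain for $\s$, so that $\Phi_\s=\Phi_\t(T_i-A_i(\t))$, then Lemma \ref{seminormalB} applied to the pair $(\t,\s)$ --- its case $\t(i,i+1)\lhd\t$ --- gives $\mff_{\t^\blam\t}T_i=A_i(\t)\mff_{\t^\blam\t}+\mff_{\t^\blam\s}$, whence $\mff_{\t^\blam\t^\blam}\Phi_\s=\mff_{\t^\blam\t}(T_i-A_i(\t))=\mff_{\t^\blam\s}$. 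Part (i) then follows formally: the anti-involution $\ast$ fixes $\mff_{\t^\blam\t^\blam}$ (since $\fm_{\s\t}^\ast=\fm_{\t\s}$ and each $F_\t$ is $\ast$-invariant), so applying $\ast$ to $\mff_{\t^\blam\u}=\mff_{\t^\blam\t^\blam}\Phi_\u$ gives $\mff_{\u\t^\blam}=\Phi_\u^\ast\mff_{\t^\blam\t^\blam}$, and Lemma \ref{obobn}(1) yields
\[
\mff_{\s\t}=\gamma_{\t^\blam}^{-1}\mff_{\s\t^\blam}\mff_{\t^\blam\t}=\gamma_{\t^\blam}^{-1}\,\Phi_\s^\ast\,\mff_{\t^\blam\t^\blam}^{2}\,\Phi_\t=\Phi_\s^\ast\,\mff_{\t^\blam\t^\blam}\,\Phi_\t.
\]

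It remains to prove that $\Phi_\t$ is independent of the chain, which simultaneously gives the recursion in (ii) for every move $\s=\t(i,i+1)\lhd\t$, not merely those on the chain defining $\Phi_\s$. Since the descending chains from $\t^\blam$ to $\t$ realize the reduced expressions of $d(\t)$, and any two reduced expressions are linked by commutation and braid moves, I would reduce this to two identities among the factors $T_i-A_i(\cdot)$: for $|i-j|\ge2$, the commutation relation $(T_i-A_i(\u))(T_j-A_j(\u s_i))=(T_j-A_j(\u))(T_i-A_i(\u s_j))$, which is immediate since $T_iT_j=T_jT_i$ and $A_i,A_j$ are unchanged under the far transposition; and the Yang--Baxter-type braid relation $(T_i-A_i(\u))(T_{i+1}-A_{i+1}(\u s_i))(T_i-A_i(\u s_is_{i+1}))=(T_{i+1}-A_{i+1}(\u))(T_i-A_i(\u s_{i+1}))(T_{i+1}-A_{i+1}(\u s_{i+1}s_i))$. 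The braid relation is the main obstacle; I would verify it by a direct computation, treating both sides as identities of rational functions in the residues $\res_\u(i),\res_\u(i+1),\res_\u(i+2)$ and using $T_i^2=(q-1)T_i+q$ --- this is exactly the computation carried out in \cite[Proposition 4.1]{Ma}, which one may instead cite directly. Once (ii) is established, $\Phi_{\t^\blam}=1$ holds by construction, completing the proof.
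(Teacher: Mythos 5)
Your proposal is correct: the paper does not prove this lemma but simply cites Murphy and \cite[Proposition 4.1, Lemma 4.3]{Ma}, and your argument is a faithful reconstruction of exactly that route — recursive definition of $\Phi_\t$ along chains realizing reduced expressions of $d(\t)$, invertibility of each factor from $A_i(\t)\notin\{q,-1\}$, the identity $\mff_{\t^\blam\t}=\mff_{\t^\blam\t^\blam}\Phi_\t$ via Lemma \ref{seminormalB}, part (i) via Lemma \ref{obobn}(1) and $\ast$-invariance of $\mff_{\t^\blam\t^\blam}$, and well-definedness (hence (ii) for all moves) via the commutation and Yang--Baxter relations for the factors $T_i-A_i(\cdot)$, which do hold identically in the residues. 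No gaps.
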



\begin{lem}\label{phiprime} Let $\blam\in\P_n$ and $\t\in\Std(\blam)$. Let $i$ be an integer with $1\leq i<n$. Suppose $q, Q_1,\cdots,Q_\ell$ are indeterminates over $\Z$.
If $\s:=\t(i,i+1)\in\Std(\blam)$ with $\s\lhd\t$, then $\Phi_\s'=(-q)^{-1}\Phi_\t'(T_i-A_i(\t'))$.
\end{lem}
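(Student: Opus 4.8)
The plan is to obtain the identity by applying the ring involution $'$ of Definition~\ref{prime} to the recursion for the elements $\Phi_\t$ from Lemma~\ref{M5prop}(ii). Since by hypothesis $q,Q_1,\cdots,Q_\ell$ are indeterminates, we are in the generic split semisimple setting in which $'$ is defined. As $\s=\t(i,i+1)\lhd\t$ is standard, Lemma~\ref{M5prop}(ii) gives $\Phi_\s=\Phi_\t(T_i-A_i(\t))$. Because $'$ is a ring homomorphism (it does not reverse products), applying it yields $\Phi_\s'=\Phi_\t'\bigl(T_i'-(A_i(\t))'\bigr)$, with $\Phi_\t'$ on the left exactly as in the assertion. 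It then remains to evaluate $T_i'$ and the scalar $(A_i(\t))'$ and to notice that both carry the common factor $(-q)^{-1}$, which can then be pulled to the front.

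The first evaluation is immediate from Definition~\ref{prime}: $T_i'=-q^{-1}T_i=(-q)^{-1}T_i$. For the second, recall from Lemma~\ref{seminormalB} that $A_i(\t)=\frac{(q-1)\res_\t(i+1)}{\res_\t(i+1)-\res_\t(i)}$; since $'$ acts on scalars by $q\mapsto q^{-1}$, $Q_j\mapsto Q_{\ell-j+1}$, I would use $(q-1)'=q^{-1}-1=(-q)^{-1}(q-1)$ together with $(\res_\t(k))'=\res_{\t'}(k)$ (the latter recorded just after Definition~\ref{prime}). Substituting gives $(A_i(\t))'=\frac{(-q)^{-1}(q-1)\res_{\t'}(i+1)}{\res_{\t'}(i+1)-\res_{\t'}(i)}=(-q)^{-1}A_i(\t')$, where $A_i(\t')$ is legitimate because the conjugate tableau $\t'$ satisfies $\t'(i,i+1)=\s'\in\Std(\blam')$. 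Plugging both evaluations into $\Phi_\s'=\Phi_\t'\bigl(T_i'-(A_i(\t))'\bigr)$ and factoring out $(-q)^{-1}$ gives $\Phi_\s'=(-q)^{-1}\Phi_\t'(T_i-A_i(\t'))$, which is the claim.

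The argument is short, and the one point that demands care is the double role of the prime symbol: $\Phi_\s'$ and $\Phi_\t'$ denote the images of $\Phi_\s,\Phi_\t$ under the ring involution (not the elements $\Phi_{\s'},\Phi_{\t'}$ attached to conjugate tableaux), while $A_i(\t')$ denotes $A_i$ evaluated at the conjugate tableau $\t'$. In particular $A_i(\t')\neq(A_i(\t))'$ in general: they differ by exactly the scalar $(-q)^{-1}$ coming from $(q-1)'=(-q)^{-1}(q-1)$, and it is precisely this factor that matches the one produced by $T_i'=(-q)^{-1}T_i$, so that a single $(-q)^{-1}$ can be extracted. I do not foresee a genuine obstacle beyond this bookkeeping; note that the same manipulation applied to $\Phi_{\t^\blam}=1$ gives $\Phi_{\t^\blam}'=1$, so together with this lemma the elements $\Phi_\u'$ are determined recursively, in parallel with the recursion of Lemma~\ref{M5prop}(ii).
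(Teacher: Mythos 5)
Your proposal is correct and follows essentially the same route as the paper: apply the involution $'$ of Definition~\ref{prime} to the recursion $\Phi_\s=\Phi_\t(T_i-A_i(\t))$ of Lemma~\ref{M5prop}(ii), use $T_i'=-q^{-1}T_i$, and verify $(A_i(\t))'=(-q)^{-1}A_i(\t')$ by the residue computation with $(\res_\t(k))'=\res_{\t'}(k)$. The bookkeeping point you flag (distinguishing $(A_i(\t))'$ from $A_i(\t')$) is exactly the content of the paper's displayed calculation.
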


\begin{proof} Recall the ring involution $'$ introduced in Definition \ref{prime}, which is defined on generators by $$
T'_0:=T_0,\,\, T'_i:=-q^{-1}T_i,\,\, q':=q^{-1},\,\,Q'_j:=Q_{\ell-j+1},\quad 1\leq i<n,\, 1\leq j\leq\ell .
$$
It follows from Lemma \ref{M5prop} that $\Phi'_\s=\Phi'_\t(-q^{-1}T_i-A_i(\t)')$. Thus it suffices to show that $A_i(\t)'=-q^{-1}A_i(\t')$.

Since $q'=q^{-1}$, we have that
	$$\begin{aligned}
A_i(\t)'&=\big(\frac{(q-1)\res_\t(i+1)}{\res_\t(i+1)-\res_\t(i)}\big)'
			=\frac{(q^{-1}-1)\res_\t(i+1)'}{\res_\t(i+1)'-\res_\t(i)'}\\
	&=\frac{(q^{-1}-1)\res_{\t'}(i+1)}{\res_{\t'}(i+1)-\res_{\t'}(i)}\\
&=(-q)^{-1}\frac{(q-1)\res_{\t'}(i+1)}{\res_{\t'}(i+1)-\res_{\t'}(i)}=(-q)^{-1}A_i(\t').\end{aligned}$$
This completes the proof of the lemma.
\end{proof}


\begin{lem}\label{scalaralt} Let $\blam\in\P_n$ and $\t\in\Std(\blam)$. Suppose $q, Q_1,\cdots,Q_\ell$ are indeterminates over $\Z$. Then we have $$
\mff_{\t_{\blam'}\t_{\blam'}}\Phi'_{\t}=(-q)^{-\ell(d(\t))}\frac{\gamma_{\t_{\blam'}}}{\gamma_{\t'}}\mff_{\t_{\blam'}\t'},\quad
(\Phi_{\t}^*)'\mff_{\t_{\blam'}\t_{\blam'}}=(-q)^{-\ell(d(\t))}\frac{\gamma_{\t_{\blam'}}}{\gamma_{\t'}}\mff_{\t'\t_{\blam'}} .
$$
\end{lem}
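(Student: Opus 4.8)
The plan is to prove the first identity by induction on $\ell(d(\t))$, and then to obtain the second one by applying the anti-involution $*$ to it. For the base case $\ell(d(\t))=0$ we have $\t=\t^\blam$, so Lemma~\ref{M5prop}(ii) gives $\Phi_\t=1$, hence $\Phi'_\t=1$; since $(\blam')'=\blam$ we have $\t_{\blam'}=(\t^\blam)'=\t'$ and $\gamma_{\t'}=\gamma_{\t_{\blam'}}$, so both sides reduce to $\mff_{\t_{\blam'}\t_{\blam'}}$. For the inductive step, assume $\t\neq\t^\blam$, so $d(\t)\neq 1$, and choose $i$ with $\ell(d(\t)s_i)=\ell(d(\t))-1$; then $\s:=\t(i,i+1)$ is a standard $\blam$-tableau with $\s\rhd\t$ and $\ell(d(\s))=\ell(d(\t))-1$ (a standard fact about standard tableaux). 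Applying Lemma~\ref{phiprime} with the roles of its two tableaux played by $\s\rhd\t$ gives $\Phi'_\t=(-q)^{-1}\Phi'_\s(T_i-A_i(\s'))$, so that
$$\mff_{\t_{\blam'}\t_{\blam'}}\Phi'_\t=(-q)^{-1}\bigl(\mff_{\t_{\blam'}\t_{\blam'}}\Phi'_\s\bigr)(T_i-A_i(\s')).$$

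Next I would substitute the induction hypothesis for $\s$ (legitimate since $\ell(d(\s))<\ell(d(\t))$ and $\t_{\blam'}$ depends only on $\blam$), replacing $\mff_{\t_{\blam'}\t_{\blam'}}\Phi'_\s$ by $(-q)^{-\ell(d(\s))}(\gamma_{\t_{\blam'}}/\gamma_{\s'})\mff_{\t_{\blam'}\s'}$. Because conjugation of tableaux reverses the dominance order and commutes with the transposition $(i,i+1)$, we have $\t'=\s'(i,i+1)$ with $\s'\lhd\t'$ in $\Std(\blam')$; Lemma~\ref{seminormalB}, taken with $\u:=\t_{\blam'}$ and with its two tableaux played by $\s'\lhd\t'$, then gives $\mff_{\t_{\blam'}\s'}T_i=A_i(\s')\mff_{\t_{\blam'}\s'}+B_i(\s')\mff_{\t_{\blam'}\t'}$ with $B_i(\s')=\gamma_{\s'}/\gamma_{\t'}$. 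The decisive point is that the $A_i(\s')\mff_{\t_{\blam'}\s'}$ terms cancel exactly, leaving $\mff_{\t_{\blam'}\s'}(T_i-A_i(\s'))=(\gamma_{\s'}/\gamma_{\t'})\mff_{\t_{\blam'}\t'}$; assembling the powers of $-q$ via $\ell(d(\t))=\ell(d(\s))+1$ and telescoping the $\gamma$-ratios produces exactly $(-q)^{-\ell(d(\t))}(\gamma_{\t_{\blam'}}/\gamma_{\t'})\mff_{\t_{\blam'}\t'}$, which is the first identity. For the second identity I would apply $*$: the involutions $*$ and $'$ commute (one checks $(x')^*=(x^*)'$ on generators, $*$ fixing each $T_i$ while $'$ scales it and both acting compatibly on the parameters), and $\mff_{\u\v}^*=\mff_{\v\u}$ since $F_\t^*=F_\t$ and $\fm_{\s\t}^*=\fm_{\t\s}$; as $*$ fixes the scalars occurring, applying $*$ to the first identity yields $(\Phi_\t^*)'\mff_{\t_{\blam'}\t_{\blam'}}=(-q)^{-\ell(d(\t))}(\gamma_{\t_{\blam'}}/\gamma_{\t'})\mff_{\t'\t_{\blam'}}$.

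The main obstacle is the conjugation bookkeeping: one must verify that passing from $\blam$-tableaux to $\blam'$-tableaux flips $\rhd$ into $\lhd$, selects the correct ``$\s\lhd\t$'' branch of Lemma~\ref{seminormalB}, matches the length count $\ell(d(\t))=\ell(d(\s))+1$, and — crucially — that the $A_i$-term manufactured by Lemma~\ref{phiprime} is precisely the one annihilated by Lemma~\ref{seminormalB}. That cancellation is what makes the closed form so clean; apart from it, every step is routine once the hypothesis that $q,Q_1,\dots,Q_\ell$ are indeterminates is in force, so that the ring involution $'$ is available.
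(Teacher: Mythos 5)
Your proposal is correct and follows essentially the same route as the paper: the paper expands $\Phi'_\t$ along a full reduced expression of $d(\t)$ via Lemma \ref{phiprime} and telescopes the $\gamma$-ratios using Lemma \ref{seminormalB}, which is exactly your single-step induction on $\ell(d(\t))$ carried out all at once, including the same cancellation of the $A_i$-terms and the same use of $*$ (commuting with $'$) for the second identity.
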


\begin{proof} Recall that $d(\t)\in\Sym_n$ such that $\t^{\blam}d(\t)=\t$. Fix a reduced expression $d(\t)=s_{i_1}\cdots s_{i_l}$, where $1\leq i_j<n$ for each $j$.
For each $1\leq k\leq l$, we define $w_k=s_{i_1}s_{i_2}\cdots s_{i_k}$ and set $\t_k:=\t^{\blam}w_k$, $\t_0=\t^{\blam}$. Then $\t_{l}=\t$. We get the following
sequence of standard $\blam$-tableaux:
$$\t^{\blam}=\t_0\triangleright\t_1\triangleright\t_2\triangleright\cdots\triangleright\t_l=\t.$$
Combining this with Lemmas \ref{M5prop} and \ref{phiprime}, we get that
\begin{equation}\label{phit}\Phi'_{\t}=(-q)^{-\ell(d(\t))}(T_{i_1}-A_{i_1}(\t_0'))(T_{i_2}-A_{i_2}(\t_1'))\cdots(T_{i_l}-A_{i_l}(\t_{l-1}')).\end{equation}

Note that $\t_{k-1}'\lhd\t_k'=\t'_{k-1}s_{i_k}$ for each $1\leq k\leq l$. We get the following sequence of standard $\blam'$-tableaux:
	$$\t_{\blam'}=\t_0'\triangleleft\t_1'\triangleleft\t_2'\triangleleft\cdots\triangleleft\t_l'=\t'.$$	
Applying Lemma \ref{seminormalB}, Lemma \ref{M5prop} and (\ref{phit}), we get that
	$$\begin{aligned}
	\mff_{\t_{\blam'}\t_{\blam'}}\Phi'_{\t}
	&=(-q)^{-\ell(d(\t))}\mff_{\t_{\blam'}\t_{\blam'}}(T_{i_1}-A_{i_1}(\t_0'))(T_{i_2}-A_{i_2}(\t_1'))\cdots(T_{i_l}-A_{i_l}(\t_{l-1}'))\qquad \text{(by (\ref{phit}))}\\
&=(-q)^{-\ell(d(\t))}\frac{\gamma_{\t_0'}}{\gamma_{\t_1'}}\frac{\gamma_{\t_1'}}{\gamma_{\t_2'}}\cdots\frac{\gamma_{\t_{l-1}'}}{\gamma_{\t_l'}}\mff_{\t_{\blam'}\t'}\qquad \text{(by Lemmas \ref{seminormalB} and \ref{M5prop})}\\
	&=(-q)^{-\ell(d(\t))}\frac{\gamma_{\t_{\blam'}}}{\gamma_{\t'}}\mff_{\t_{\blam'}\t'}.
	\end{aligned}$$
	Applying the anti-automorphism $*$ and noting that $*$ commutes with $'$, we get that $$(\Phi_{\t}^*)'\mff_{\t_{\blam'}\t_{\blam'}}=(-q)^{-\ell(d(\t))}\frac{\gamma_{\t_{\blam'}}}{\gamma_{\t'}}\mff_{\t'\t_{\blam'}}. $$
This completes the proof of the lemma.
\end{proof}

The following result reveals some hidden relationship between $\gamma_{\t'}, \gamma_{\t}', \gamma_{\t_{\blam'}}$ and $\gamma_{\t^{\blam}}'$.

\begin{lem}\label{gam'tgamt'} Suppose $q\neq 1$, (\ref{ss0}) holds and $R=K$ is a field.
	Let $\blam\in\P_n$ be a multipartition of $n$ and $\t\in\Std(\blam)$. Then we have that
	$$\gamma_{\t'}\gamma_{\t}'=q^{-2\ell(d(\t))}\gamma_{\t_{\blam'}}\gamma_{\t^{\blam}}'.$$
\end{lem}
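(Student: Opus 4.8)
The plan is to compute the image $\mff'_{\t^\blam\t}$ of the off-diagonal seminormal basis element $\mff_{\t^\blam\t}$ under the ring involution $'$ in two different ways and to compare coefficients. Because the involution $'$ and Lemmas~\ref{phiprime} and~\ref{scalaralt} are formulated with $q,Q_1,\dots,Q_\ell$ treated as indeterminates over $\Z$, I would carry out the computation inside the generic split semisimple algebra $\HH^{\mathscr{K}}_{\ell,n}(\hat q,\hat\bQ)$. The asserted equality is then an identity of rational functions in $q,Q_1,\dots,Q_\ell$, and it specializes to any field $K$ and any parameters $q,\bQ$ satisfying~(\ref{ss0}), since the denominators of all the $\gamma$- and $\gamma'$-coefficients occurring are units under that hypothesis.

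First I would record the two elementary tableau identities $(\t^\blam)'=\t_{\blam'}$ and $(\t_{\blam'})'=\t^\blam$, both immediate from the definition $\t_\bmu=(\t^{\bmu'})'$. Since $\Phi_{\t^\blam}=1$ by Lemma~\ref{M5prop}(ii), the factorization of Lemma~\ref{M5prop}(i) reads $\mff_{\t^\blam\t}=\mff_{\t^\blam\t^\blam}\Phi_\t$, so applying $'$ gives $\mff'_{\t^\blam\t}=\mff'_{\t^\blam\t^\blam}\,\Phi'_\t$. By Lemma~\ref{cor2}(1) applied to the tableau $\t^\blam$ one has $\mff'_{\t^\blam\t^\blam}=(\gamma'_{\t^\blam}/\gamma_{\t_{\blam'}})\,\mff_{\t_{\blam'}\t_{\blam'}}$, and the first identity of Lemma~\ref{scalaralt} then gives $\mff_{\t_{\blam'}\t_{\blam'}}\Phi'_\t=(-q)^{-\ell(d(\t))}(\gamma_{\t_{\blam'}}/\gamma_{\t'})\,\mff_{\t_{\blam'}\t'}$; multiplying these, $\mff'_{\t^\blam\t}=(-q)^{-\ell(d(\t))}(\gamma'_{\t^\blam}/\gamma_{\t'})\,\mff_{\t_{\blam'}\t'}$. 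On the other hand $\mff'_{\t^\blam\t}=\mfg_{(\t^\blam)'\t'}=\mfg_{\t_{\blam'}\t'}=\alpha_{\t_{\blam'}\t'}\,\mff_{\t_{\blam'}\t'}$ by~(\ref{2primes}) and Lemma~\ref{cor2}(2). Since $\mff_{\t_{\blam'}\t'}\neq 0$, comparing coefficients yields $\alpha_{\t_{\blam'}\t'}=(-q)^{-\ell(d(\t))}\gamma'_{\t^\blam}/\gamma_{\t'}$. Squaring this and inserting the formula $\alpha_{\u\v}^2=\gamma'_{\u'}\gamma'_{\v'}/(\gamma_\u\gamma_\v)$ of Lemma~\ref{cor2}(2) with $\u=\t_{\blam'}$ and $\v=\t'$ (so that $\u'=\t^\blam$ and $\v'=\t$) gives $q^{-2\ell(d(\t))}(\gamma'_{\t^\blam})^2/\gamma_{\t'}^2=\gamma'_{\t^\blam}\gamma'_\t/(\gamma_{\t_{\blam'}}\gamma_{\t'})$, and clearing denominators produces exactly $\gamma_{\t'}\gamma'_\t=q^{-2\ell(d(\t))}\gamma_{\t_{\blam'}}\gamma'_{\t^\blam}$.

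I expect the only real difficulty to be bookkeeping rather than analysis: one must keep the conjugation identities $(\t^\blam)'=\t_{\blam'}$ and $(\t_{\blam'})'=\t^\blam$ straight at every step, apply Lemma~\ref{scalaralt} on the correct side, and verify that the passage from the generic algebra back to the semisimple specialization over $K$ is legitimate (it is, by~(\ref{ss0})). If one prefers to bypass the scalars $\alpha_{\u\v}$ altogether, an essentially equivalent argument computes $\mff'_{\t\t}=(\Phi_\t^*)'\,\mff'_{\t^\blam\t^\blam}\,\Phi'_\t$ directly: inserting the relation $\mff_{\t_{\blam'}\t_{\blam'}}=\gamma_{\t_{\blam'}}^{-1}\mff_{\t_{\blam'}\t_{\blam'}}\mff_{\t_{\blam'}\t_{\blam'}}$ (Lemma~\ref{obobn}(1)) and applying both halves of Lemma~\ref{scalaralt} gives $\mff'_{\t\t}=q^{-2\ell(d(\t))}(\gamma'_{\t^\blam}\gamma_{\t_{\blam'}}/\gamma_{\t'}^2)\,\mff_{\t'\t'}$, which compared with $\mff'_{\t\t}=(\gamma'_\t/\gamma_{\t'})\,\mff_{\t'\t'}$ from Lemma~\ref{cor2}(1) yields the same conclusion. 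A more pedestrian alternative is induction on $\ell(d(\t))$ via the recursions in Definition~\ref{gamma1} and in the lemma recorded just after Lemma~\ref{cor2}, together with $(\res_\t(k))'=\res_{\t'}(k)$; there the point to watch is that $\t=\s(i,i+1)\triangleright\s$ forces $\ell(d(\t))=\ell(d(\s))-1$, which is what makes the power of $q$ come out correctly.
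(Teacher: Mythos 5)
Your main argument is correct, but it takes a genuinely different route from the paper. The paper proves the identity by a direct telescoping computation: fix a reduced expression of $d(\t)$, write $\gamma_\t$ as $\gamma_{\t^\blam}$ times a product of the recursive factors from Definition \ref{gamma1}(2), apply the involution $'$ to each factor using $(\res_\t(k))'=\res_{\t'}(k)$ to get $q^{-2}\gamma_{\t'_{k-1}}/\gamma_{\t'_k}$, and multiply out — this is exactly the ``pedestrian alternative'' you sketch at the end (and your remark about the chain $\t_{\blam'}=\t_0'\lhd\cdots\lhd\t_l'=\t'$ is the relevant bookkeeping there). Your primary route instead pins down the scalar $\alpha_{\t_{\blam'}\t'}=(-q)^{-\ell(d(\t))}\gamma'_{\t^\blam}/\gamma_{\t'}$ exactly, by comparing the two computations of $\mff'_{\t^\blam\t}$ via Lemma \ref{M5prop}, Lemma \ref{cor2}(1), Lemma \ref{scalaralt} and (\ref{2primes}), and then feeds this into the square formula $\alpha_{\s\t}^2=\gamma'_{\s'}\gamma'_{\t'}/\gamma_\s\gamma_\t$ of Lemma \ref{cor2}(2); the conjugation identities $(\t^\blam)'=\t_{\blam'}$, $(\t_{\blam'})'=\t^\blam$ and the coefficient comparison are all right, and the reduction to generic parameters and specialization under (\ref{ss0}) is handled the same way the paper handles it. There is no circularity: Lemmas \ref{cor2}, \ref{M5prop}, \ref{phiprime}, \ref{scalaralt} are all established before this lemma and do not use it. In effect you prove a special case of Theorem \ref{mainthm1} (for the pair $(\t_{\blam'},\t')$) first and deduce the lemma from it, reversing the paper's logical order; this buys a shorter, computation-free derivation at the cost of invoking the $\Phi$-machinery and the dual seminormal basis, whereas the paper's term-by-term argument is more elementary and keeps the lemma independent of that machinery before it is used as input to Theorem \ref{mainthm1}.
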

	
\begin{proof} Without loss of generality we can assume that $q, Q_1,\cdots,Q_\ell$ are indeterminates over $\Z$. As in the proof of Lemma \ref{scalaralt}, we fix a reduced expression $d(\t)=s_{i_1}\cdots s_{i_l}$, where $1\leq i_j<n$ for each $j$.
For each $1\leq k\leq l$, we define $w_k=s_{i_1}s_{i_2}\cdots s_{i_k}$ and set $\t_k:=\t^{\blam}w_k$, $\t_0=\t^{\blam}$. Then we get the following two sequences of standard tableaux:
	$$
	\t^{\blam}=\t_0\triangleright\t_1\triangleright\t_2\triangleright\cdots\triangleright\t_l=\t,\quad\,
	\t_{\blam'}=\t_0'\triangleleft\t_1'\triangleleft\t_2'\triangleleft\cdots\triangleleft\t_l'=\t'.
	$$
	By definition, we have
	$$\gamma_{\t}=\gamma_{\t^{\blam}}\frac{\gamma_{\t_1}}{\gamma_{\t_0}}\frac{\gamma_{\t_2}}{\gamma_{\t_1}}\cdots\frac{\gamma_{\t_l}}{\gamma_{\t_{l-1}}}.$$
	Applying the ring involution $'$, we get that
$$\gamma_{\t}'=\gamma_{\t^{\blam}}'\big(\frac{\gamma_{\t_1}}{\gamma_{\t_0}}\big)'\big(\frac{\gamma_{\t_2}}{\gamma_{\t_1}}\big)'\cdots\big(\frac{\gamma_{\t_l}}{\gamma_{\t_{l-1}}}\big)'.$$
	For each $1\leq k\leq l$, by Definition \ref{gamma1} and Definition \ref{prime} we have that
	$$\begin{aligned}
	\big(\frac{\gamma_{\t_k}}{\gamma_{\t_{k-1}}}\big)'
	&=\bigg(\frac{(q\res_{\t_{k-1}}(i_k)-\res_{\t_k}(i_k))(\res_{\t_{k-1}}(i_k)-q\res_{\t_k}(i_k))}{(\res_{\t_{k-1}}(i_k)-\res_{\t_k}(i_k))^2}\bigg)'\\
	&=\frac{(q^{-1}\res_{\t_{k-1}'}(i_k)-\res_{\t_k'}(i_k))(\res_{\t_{k-1}'}(i_k)-q^{-1}\res_{\t_k'}(i_k))}{(\res_{\t_{k-1}'}(i_k)-\res_{\t_k'}(i_k))^2}\\
	&=q^{-2}\frac{(\res_{\t_{k-1}'}(i_k)-q\res_{\t_k'}(i_k))(q\res_{\t_{k-1}'}(i_k)-\res_{\t_k'}(i_k))}{(\res_{\t_{k-1}'}(i_k)-\res_{\t_k'}(i_k))^2}\\
	&=q^{-2}\frac{\gamma_{\t_{k-1}'}}{\gamma_{\t_k'}}.
	\end{aligned}$$
	Hence, we can get that
	$$\begin{aligned}
	\gamma_{\t}'
&=\gamma_{\t^{\blam}}'\big(\frac{\gamma_{\t_1}}{\gamma_{\t_0}}\big)'\big(\frac{\gamma_{\t_2}}{\gamma_{\t_1}}\big)'\cdots\big(\frac{\gamma_{\t_l}}{\gamma_{\t_{l-1}}}\big)'
=q^{-2\ell(d(\t))}\gamma_{\t^{\blam}}'\frac{\gamma_{\t_{0}'}}{\gamma_{\t_1'}}\frac{\gamma_{\t_{1}'}}{\gamma_{\t_2'}}\cdots\frac{\gamma_{\t_{l-1}'}}{\gamma_{\t_l'}}\\
	&=q^{-2\ell(d(\t))}\gamma_{\t^{\blam}}'\frac{\gamma_{\t_{\blam'}}}{\gamma_{\t'}}.
	\end{aligned}$$
	It follows that $\gamma_{\t'}\gamma_{\t}'=q^{-2\ell(d(\t))}\gamma_{\t_{\blam'}}\gamma_{\t^{\blam}}'$. This completes the proof of the lemma.
\end{proof}

Let $\blam\in\P_n$ and $\s,\t\in\Std(\blam)$. Recall that $\alpha_{\s\t}\in K^\times$ is an invertible scalar introduced in Lemma \ref{cor2} such that $\mfg_{\s\t}=\alpha_{\s\t}\mff_{\s\t}$. Now we can give the proof of the first main result of this paper which presents some explicit formulae for the scalar $\alpha_{\s\t}$.


\medskip
\noindent
{\textbf{Proof of Theorem \ref{mainthm1}}}: To prove the theorem, we can assume without loss of generality that $q, Q_1,\cdots,Q_\ell$ are indeterminates over $\Z$. In this case, we can use the ring involution $'$ of $\HH_{\ell,n}(q,\bQ)$ introduced in Definition \ref{prime}. By Lemma \ref{M5prop}, we have that $\mff_{\s'\t'}=\Phi_{\s'}^*\mff_{\t^{\blam'}\t^{\blam'}}\Phi_{\t'}$. Applying the involution $'$ and using Lemma \ref{cor2} 1), we can get that
\begin{equation}\label{gst1}
\mfg_{\s\t}=\mff'_{\s'\t'}=(\Phi_{\s'}^*\mff_{\t^{\blam'}\t^{\blam'}}\Phi_{\t'})'=(\Phi_{\s'}^*)'\mff_{\t^{\blam'}\t^{\blam'}}'\Phi'_{\t'}
=\frac{\gamma_{\t^{\blam'}}'}{\gamma_{\t_{\blam}}}(\Phi_{\s'}^*)'\mff_{\t_{\blam}\t_{\blam}}\Phi'_{\t'}.\end{equation}

Applying Lemma \ref{scalaralt}, we can deduce that
	$$\begin{aligned}
(\Phi_{\s'}^*)'\mff_{\t_{\blam}\t_{\blam}}\Phi_{\t'}'&=(-q)^{-\ell(d(\s'))}\frac{\gamma_{\t_\blam}}{\gamma_\s}\mff_{\s\t_\blam}\Phi_{\t'}'\\
&=(-q)^{-\ell(d(\s'))}\frac{1}{\gamma_\s}\mff_{\s\t_\blam}\mff_{\t_\blam\t_{\blam}}\Phi_{\t'}'\\
&=(-q)^{-\ell(d(\s'))-\ell(d(\t'))}\frac{\gamma_{\t_{\blam}}}{\gamma_{\s}\gamma_{\t}}\mff_{\s\t_\blam}\mff_{\t_\blam\t}
=(-q)^{-\ell(d(\s'))-\ell(d(\t'))}\frac{\gamma^2_{\t_{\blam}}}{\gamma_{\s}\gamma_{\t}}\mff_{\s\t}.
\end{aligned}
$$
Combining this with (\ref{gst1}), we can deduce that $$
\mfg_{\s\t}=(-q)^{-\ell(d(\s'))-\ell(d(\t'))}\frac{\gamma_{\t_{\blam}}\gamma'_{\t^{\blam'}}}{\gamma_{\s}\gamma_{\t}}\mff_{\s\t}.
$$
Hence $\alpha_{\s\t}=(-q)^{-\ell(d(\s'))-\ell(d(\t'))}\frac{\gamma_{\t_{\blam}}\gamma'_{\t^{\blam'}}}{\gamma_{\s}\gamma_{\t}}$. This proves the first equality of the theorem.

Finally, by Lemma \ref{gam'tgamt'}, we have that
	$$\frac{\gamma_{\t_{\blam}}}{\gamma_{\s}}=q^{2\ell(d(\s'))}\frac{\gamma_{\s'}'}{\gamma_{\t^{\blam'}}'},\qquad
\frac{\gamma_{\t^{\blam'}}'}{\gamma_{\t}}=q^{2\ell(d(\t))}\frac{\gamma_{\t'}'}{\gamma_{\t_{\blam}}}.$$
It follows that $$
(-q)^{-\ell(d(\s'))-\ell(d(\t'))}\frac{\gamma_{\t_{\blam}}\gamma_{\t^{\blam'}}'}{\gamma_{\s}\gamma_{\t}}=
(-q)^{\ell(d(\s'))+\ell(d(\t'))}\frac{\gamma_{\s'}'\gamma_{\t'}'}{\gamma_{\t_{\blam}}\gamma_{\t^{\blam'}}'},$$
which proves the second equality of the theorem.
\qed
\medskip

Let $\iota: \HH_{\ell,n-1}(q,\bQ)\hookrightarrow\HH_{\ell,n}(q,\bQ)$ be the natural inclusion which is defined on generators by $\iota(T_i):=T_i$ for $0\leq i<n-1$. In order to avoid the confusion between the notations for $\HH_{\ell,n-1}(q,\bQ)$ and $\HH_{\ell,n}(q,\bQ)$. We add a superscript $(n)$ to indicate that it is the notation for $\HH_{\ell,n}(q,\bQ)$. Let $\bmu\in\P_{n-1}$ and $\s,\t\in\Std(\blam)$. Then we have \begin{equation}
\mff_{\s\t}^{(n-1)}=\sum_{\blam\in\P_n}\sum_{\u,\v\in\Std(\blam)}\beta_{\u\v}^{\s\t}\mff_{\u\v}^{(n)},
\end{equation}
where $\beta_{\u\v}^{\s\t}\in K$ for each pair $(\u,\v)$. In the rest of this section, we shall give some explicit formulae for these scalars $\beta_{\u\v}^{\s\t}$.

\begin{lem}\label{fst3} Suppose $q\neq 1$, (\ref{ss0}) holds and $R=K$ is a field. Let $\bmu\in\P_{n-1}, \blam\in\P_n$, and $\s,\t\in\Std(\bmu)$, $\u,\v\in\Std(\blam)$. Then \begin{enumerate}
\item $\beta_{\u\v}^{\s\t}\neq 0$ only if $\u\downarrow_{n-1}=\s$ and $\v\downarrow_{n-1}=\t$;
\item $\beta_{\u\u}^{\s\s}\neq 0$ if and only if $\u\downarrow_{n-1}=\s$. In that case, $\beta_{\u\u}^{\s\s}=\gamma^{(n-1)}_{\s}/\gamma^{(n)}_{\u}$.
\end{enumerate}
\end{lem}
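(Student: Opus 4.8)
The plan is to push everything through the seminormal calculus of Lemma~\ref{obobn}, using that the $\mff_{\u\v}^{(n)}$ are simultaneous eigenvectors for the Jucys--Murphy operators and that $\iota$ intertwines these operators: since $\mL_m=q^{1-m}T_{m-1}\cdots T_1T_0T_1\cdots T_{m-1}$ involves only $T_0,\dots,T_{m-1}$ and $\iota(T_i)=T_i$, we have $\iota(\mL_k^{(n-1)})=\mL_k^{(n)}$ for $1\le k\le n-1$. I also record two routine facts used throughout: first, $\HH_{\ell,n-1}(q,\bQ)$ is semisimple, since the product in (\ref{ss0}) for $n-1$ is a subproduct of the one for $n$, so (\ref{ss0}) --- and hence the separation condition (\ref{separacontent1}) --- also holds at level $n-1$; second, restricting a standard tableau to its first $n-1$ entries does not alter residues at positions $\le n-1$, i.e.\ $\res_\v^{(n)}(k)=\res_{\v\downarrow_{n-1}}^{(n-1)}(k)$ for $1\le k\le n-1$.

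For the first part, apply $\mL_k$ with $1\le k\le n-1$ on the right of~(\ref{beta}). By Lemma~\ref{obobn}(2) the left side becomes $\res_\t(k)\,\mff_{\s\t}^{(n-1)}$ and the right side $\sum_{\blam,\u,\v}\beta_{\u\v}^{\s\t}\res_\v(k)\,\mff_{\u\v}^{(n)}$; comparing coefficients of the linearly independent $\mff_{\u\v}^{(n)}$ gives $\beta_{\u\v}^{\s\t}\bigl(\res_\t(k)-\res_\v(k)\bigr)=0$ for every such $k$. If $\beta_{\u\v}^{\s\t}\ne 0$, then $\res_\t(k)=\res_\v(k)=\res_{\v\downarrow_{n-1}}(k)$ for all $k\le n-1$, so $\t$ and $\v\downarrow_{n-1}$ have equal residue sequences, and the separation condition~(\ref{separacontent1}) at level $n-1$ forces $\v\downarrow_{n-1}=\t$. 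Multiplying on the left by $\mL_k$ instead (and using $\mL_k\mff_{\u\v}^{(n)}=\res_\u(k)\mff_{\u\v}^{(n)}$) gives $\u\downarrow_{n-1}=\s$ in the same way.

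For the second part, the crux is the branching identity
$$F_\s^{(n-1)}=\sum_{\substack{\blam\in\P_n,\ \u\in\Std(\blam)\\ \u\downarrow_{n-1}=\s}}F_\u^{(n)}\qquad\text{in }\HH_{\ell,n}(q,\bQ).$$
To prove it, write $F_\s^{(n-1)}$ via Definition~\ref{Ft} as a polynomial in $\mL_1^{(n)},\dots,\mL_{n-1}^{(n)}$ and compute $F_\s^{(n-1)}\mff_{\u\u}^{(n)}$ with Lemma~\ref{obobn}(2): a factor $(\mL_k-c)/(\res_\s(k)-c)$ acts by the scalar $(\res_\u(k)-c)/(\res_\s(k)-c)$, which is $1$ whenever $\res_\u(k)=\res_\s(k)$ and, taking $c=\res_\u(k)$ --- legitimate, since $\res_\u(k)=\res_{\u\downarrow_{n-1}}(k)$ lies in the residue set $R^{(n-1)}(k)$ as $\u\downarrow_{n-1}$ is standard of size $n-1$ --- vanishes whenever $\res_\u(k)\ne\res_\s(k)$. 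Hence $F_\s^{(n-1)}\mff_{\u\u}^{(n)}$ equals $\mff_{\u\u}^{(n)}$ if $\res_\u(k)=\res_\s(k)$ for all $k\le n-1$, i.e.\ (again by separation at level $n-1$) if $\u\downarrow_{n-1}=\s$, and is $0$ otherwise. Multiplying $F_\s^{(n-1)}$ by the resolution of the identity $1=\sum_{\blam}\sum_{\u\in\Std(\blam)}\frac{1}{\gamma_\u^{(n)}}\mff_{\u\u}^{(n)}$ (Lemma~\ref{obobn}(3),(4)) yields the displayed identity. Now substitute $\mff_{\s\s}^{(n-1)}=\gamma_\s^{(n-1)}F_\s^{(n-1)}$ and $\mff_{\u\u}^{(n)}=\gamma_\u^{(n)}F_\u^{(n)}$ to get
$$\mff_{\s\s}^{(n-1)}=\sum_{\substack{\blam\in\P_n,\ \u\in\Std(\blam)\\ \u\downarrow_{n-1}=\s}}\frac{\gamma_\s^{(n-1)}}{\gamma_\u^{(n)}}\,\mff_{\u\u}^{(n)},$$
and comparison with~(\ref{beta}) reads off $\beta_{\u\u}^{\s\s}=\gamma_\s^{(n-1)}/\gamma_\u^{(n)}$ when $\u\downarrow_{n-1}=\s$ (nonzero, as $\gamma_\s^{(n-1)},\gamma_\u^{(n)}\in K^\times$) and $\beta_{\u\u}^{\s\s}=0$ otherwise.

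All the manipulations are routine once the seminormal relations are in hand; the only points that need care are the descent of semisimplicity --- hence of the separation condition --- from $\HH_{\ell,n}$ to $\HH_{\ell,n-1}$ and the bookkeeping distinguishing $R^{(n-1)}(k)$ from $R^{(n)}(k)$, which I expect to be the main, and fairly minor, obstacle.
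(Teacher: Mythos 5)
Your proof is correct. Part (a) is essentially the paper's argument: act with $\mL_k$, $1\le k\le n-1$, on both sides of (\ref{beta}), use Lemma \ref{obobn} 2) and the separation condition (\ref{separacontent1}) at level $n-1$ (your preliminary remarks that semisimplicity descends to $\HH_{\ell,n-1}$ and that residues are preserved under restriction are exactly the points one needs and are fine). For part (b), however, you take a genuinely different route. The paper argues abstractly: it first uses part (a) and equality of shapes to reduce to diagonal terms, then observes that $\mff_{\s\s}^{(n-1)}/\gamma_\s^{(n-1)}$ is an idempotent written as a combination of the complete set of pairwise orthogonal primitive idempotents $\mff_{\u\u}^{(n)}/\gamma_\u^{(n)}$, so every nonzero coefficient must equal $1$, and finally compares the two resolutions of the identity $\sum_\s F_\s^{(n-1)}=1=\sum_\u F_\u^{(n)}$ to conclude that exactly the $\u$ with $\u\downarrow_{n-1}=\s$ occur. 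You instead prove the explicit branching identity $F_\s^{(n-1)}=\sum_{\u\downarrow_{n-1}=\s}F_\u^{(n)}$ directly, by evaluating each factor $(\mL_k-c)/(\res_\s(k)-c)$ of $F_\s^{(n-1)}$ on $\mff_{\u\u}^{(n)}$ via the Jucys--Murphy eigenvalues and choosing $c=\res_\u(k)\in R^{(n-1)}(k)$ to kill the terms with $\u\downarrow_{n-1}\neq\s$, then multiplying by the resolution of the identity at level $n$. Your computation is valid (the $\mL_k$ commute, and $\res_\u(k)$ does lie in $R^{(n-1)}(k)$ for $k\le n-1$), it yields both directions of the equivalence at once, and it does not even need part (a) as input; the paper's argument is shorter on computation but leans on the abstract idempotent bookkeeping, whereas yours makes the branching of the idempotents $F_\s$ explicit, which is a standard and slightly more informative statement in its own right.
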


\begin{proof} Without loss of generality we can assume $n\geq 2$. Suppose that $\beta_{\u\v}^{\s\t}\neq 0$. Then the equalities $\u\downarrow_{n-1}=\s$ and $\v\downarrow_{n-1}=\t$ follows from (\ref{separacontent1}) and Lemma \ref{obobn} 2) by considering the left and the right actions of $\mL_m$ for $1\leq m\leq n$. This proves the part a) of the lemma.

Let $\bmu\in\P_{n-1}$ and $\s\in\Std(\bmu)$. By the part a) of the lemma, we can write \begin{equation}
\mff_{\s\s}^{(n-1)}=\sum_{\blam\in\P_n}\sum_{\substack{\u,\v\in\Std(\blam)\\ \u\downarrow_{n-1}=\s=\v\downarrow_{n-1}}}\beta_{\u\v}^{\s\s}\mff_{\u\v}^{(n)}.
\end{equation}

Suppose $\beta_{\u\v}^{\s\s}\neq 0$. Then by Lemma \ref{fst3} we see that $\u\downarrow_{n-1}=\s=\v\downarrow_{n-1}$. Since ${\rm{Shape}}(\u)={\rm{Shape}}(\v)$, it follows that $\u=\v$. Therefore, we can get that \begin{equation}
\label{fssn1}
\mff_{\s\s}^{(n-1)}/\gamma^{(n-1)}_{\s}=\sum_{\blam\in\P_n}\sum_{\substack{\u\in\Std(\blam)\\ \u\downarrow_{n-1}=\s}}(\gamma^{(n)}_\u/\gamma^{(n-1)}_{\s})\beta_{\u\u}^{\s\s}\mff_{\u\u}^{(n)}/\gamma^{(n)}_\u .
\end{equation}

Since $\mff_{\s\s}^{(n-1)}/\gamma^{(n-1)}_{\s}$ is a primitive idempotent and $\{\mff_{\u\u}^{(n)}/\gamma^{(n)}_\u|\u\in\Std(\blam),\blam\in\P_n\}$ is a complete set of pairwise orthogonal primitive idempotents in $\HH_{\ell,n}(q,\bQ)$, it follows that $(\gamma^{(n)}_\u/\gamma^{(n-1)}_{\s})\beta_{\u\u}^{\s\s}=1$ whenever $\u\in\Std(\blam)$, $\blam\in\P_n$, which satisfies $\beta_{\u\u}^{\s\s}\neq 0$ (and hence $\u\downarrow_{n-1}=\s$). Thus \begin{equation}\label{sscases}
\text{$\beta_{\u\u}^{\s\s}=\gamma^{(n-1)}_{\s}/\gamma^{(n)}_{\u}$ whenever $\beta_{\u\u}^{\s\s}\neq 0$.}
\end{equation}

On the other hand, we have that $\sum_{\bmu\in\P_{n-1}}\sum_{\s\in\Std(\bmu)}\mff_{\s\s}^{(n-1)}/\gamma^{(n-1)}_{\s}=1$. Combining this with (\ref{fssn1}), (\ref{sscases}) and the equality $\sum_{\blam\in\P_n}\sum_{\u\in\Std(\blam)}\mff_{\u\u}^{(n)}/\gamma^{(n)}_\u=1$ together we can deduce that $\beta_{\u\u}^{\s\s}\neq 0$ if and only if $u\downarrow_{n-1}=\s$. This completes the proof of part b) of the lemma.
\end{proof}

Let $\blam\in\P_n$ and $\alpha\in[\blam]$. If $[\blam]\setminus\{\alpha\}$ is again the Young diagram of a multipartition, then we say that $\alpha$ is a removable node of $[\blam]$.

\begin{lem}\label{gammann+1}
	Let $\bmu\in\P_{n-1}, \blam\in\P_n$ such that $\bmu=\blam\setminus\{\alpha\}$ for some removable node $\alpha$ of $[\blam]$. Let $\s,\t\in\Std(\bmu)$, $\u,\v\in\Std(\blam)$. If $\u\downarrow_{n-1}=\s$, then we have  	$$\frac{\gamma_{\s}^{(n-1)}}{\gamma_{\u}^{(n)}}=\frac{\gamma_{\t^{\bmu}}^{(n-1)}}{\gamma_{\a}^{(n)}},$$
where $\a\in\Std(\blam)$ is the unique standard $\blam$-tableau such that $\a\downarrow_{n-1}=\t^{\bmu}$.
\end{lem}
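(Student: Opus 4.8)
The plan is to show that the scalar $\gamma_{\s}^{(n-1)}/\gamma_{\u}^{(n)}$ depends only on $\blam$ and $\bmu$ — equivalently, only on the removable node $\alpha=[\blam]\setminus[\bmu]$ — and not on the particular $\s\in\Std(\bmu)$. Observe first that the hypothesis $\u\downarrow_{n-1}=\s$ forces $\u$ to be exactly the tableau obtained from $\s$ by entering $n$ in the box $\alpha$ (the box of $\u$ containing $n$ is $[\blam]\setminus[\bmu]=\{\alpha\}$); in particular $\res_{\u}(k)=\res_{\s}(k)$ for all $1\le k\le n-1$. Granting the asserted independence, specialising to $\s=\t^{\bmu}$ — whose associated tableau is precisely the $\a$ of the statement — yields the claimed identity. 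We may assume $n\ge 2$, the case $n=1$ being trivial.

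The core step is to check that $\gamma_{\s}^{(n-1)}/\gamma_{\u}^{(n)}$ is unchanged when $\s$ is replaced by $\s':=\s(i,i+1)$ for any $1\le i\le n-2$ with $\s'$ standard. Since $\s$ and $\u$ agree on the entries $1,\dots,n-1$ and $i+1\le n-1$, the tableau $\u':=\u(i,i+1)$ is again standard, $\u'\downarrow_{n-1}=\s'$, its box of $n$ is still $\alpha$, and $\Shape(\u\downarrow_{\{1,\dots,k\}})=\Shape(\s\downarrow_{\{1,\dots,k\}})$ for all $k\le n-1$, so that $\s'\rhd\s\iff\u'\rhd\u$. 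Assuming without loss of generality $\s'\rhd\s$ and applying the recursion of Definition~\ref{gamma1} to the pairs $(\s',\s)$ and $(\u',\u)$, together with $\res_{\u}(i)=\res_{\s}(i)$ and $\res_{\u'}(i)=\res_{\s'}(i)$, gives
$$\frac{\gamma_{\s}^{(n-1)}}{\gamma_{\s'}^{(n-1)}}
=\frac{(q\res_{\s'}(i)-\res_{\s}(i))(\res_{\s'}(i)-q\res_{\s}(i))}{(\res_{\s'}(i)-\res_{\s}(i))^{2}}
=\frac{\gamma_{\u}^{(n)}}{\gamma_{\u'}^{(n)}},$$
and hence $\gamma_{\s}^{(n-1)}/\gamma_{\u}^{(n)}=\gamma_{\s'}^{(n-1)}/\gamma_{\u'}^{(n)}$.

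To finish, I would use the standard fact that any two standard $\bmu$-tableaux are connected by a chain of elementary transpositions $(i,i+1)$ with $1\le i\le n-2$ — the same connectedness that makes the recursive Definition~\ref{gamma1} consistent — and walk from an arbitrary $\s$ to $\t^{\bmu}$ along such a chain, applying the previous paragraph at each step; the lifted chain of $\blam$-tableaux runs from $\u$ to $\a$, so $\gamma_{\s}^{(n-1)}/\gamma_{\u}^{(n)}=\gamma_{\t^{\bmu}}^{(n-1)}/\gamma_{\a}^{(n)}$, as desired. The main thing to be careful about is the bookkeeping in the core step — checking that each move on $\bmu$-tableaux lifts to a move on $\blam$-tableaux that fixes the box of $n$ and respects both standardness and the dominance direction — but this is automatic since every transposition index is at most $n-1$.
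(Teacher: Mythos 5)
Your proof is correct and follows essentially the same route as the paper: connect $\s$ to $\t^{\bmu}$ by a chain of adjacent transpositions through standard $\bmu$-tableaux (the paper realises this chain via a reduced expression of $d(\s)$), lift each step to the $\blam$-tableaux obtained by adding $n$ in the box $\alpha$, and use the recursion of Definition \ref{gamma1}(2) together with the equality of residues on $1,\dots,n-1$ to see that each step changes $\gamma^{(n-1)}$ and $\gamma^{(n)}$ by the same factor, whence the ratio telescopes. The bookkeeping you flag (standardness of the lifted tableaux and matching dominance directions) is handled exactly as you describe, so there is no gap.
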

\begin{proof}
By assumption, $\u\downarrow_{n-1}=\s$. In particular, $\res_{\s}(k)=\res_{\u}(k)$, for any $1\leq k\leq n-1$. For any $\s\in\Std(\bmu)$, let $d(\s)\in\Sym_n$ such that $\t^{\bmu}d(\s)=\s$. We now fix a reduced expression $d(\s):=s_{i_{1}}s_{i_{2}}\cdots s_{i_{m}}$. We set $\s_{0}:=\t^{\bmu}$ and $\s_{k}:=\t^{\bmu}s_{i_{1}}s_{i_{2}}\cdots s_{i_{k}}$ for $1\leq k\leq m$. Then we can get a sequence of standard $\bmu$-tableaux
	$$\t^{\bmu}=\s_{0}\triangleright\s_{1}\triangleright\cdots\triangleright\s_{m}=\s.$$
For each $0\leq k\leq m$, we use $\u_k$ to denote the unique standard $\blam$-tableau such that $\u_k\downarrow_{n-1}=\s_k$. Then we have that $$
\a=\u_{0}\triangleright\u_{1}\triangleright\cdots\triangleright\u_{m}=\u .$$
In particular,
	$$\res_{\u_{k}}(j)=\res_{\s_{k}}(j),\quad \forall\,0\leq k\leq m,\, 1\leq j\leq n-1.$$

By the inductive definition of the $\gamma$-coefficients given in Definition \ref{gamma1}, we can deduce that for all $1\leq k\leq m$,  $$\frac{\gamma_{\s_{k}}^{(n-1)}}{\gamma_{\s_{k-1}}^{(n-1)}}=\frac{\gamma_{\u_{k}}^{(n)}}{\gamma_{\u_{k-1}}^{(n)}} .$$
It follows that
	$$
		\frac{\gamma_{\s}^{(n-1)}}{\gamma_{\t^{\bmu}}^{(n-1)}}
		=\frac{\gamma_{\s_{m}}^{(n-1)}}{\gamma_{\s_{m-1}}^{(n-1)}}\frac{\gamma_{\s_{m-1}}^{(n-1)}}{\gamma_{\s_{m-2}}^{(n-1)}}\cdots\frac{\gamma_{\s_{1}}^{(n-1)}}{\gamma_{\s_{0}}^{(n-1)}}
		=\frac{\gamma_{\u_{m}}^{(n)}}{\gamma_{\u_{m-1}}^{(n)}}\frac{\gamma_{\u_{m-1}}^{(n)}}{\gamma_{\u_{m-2}}^{(n)}}\cdots\frac{\gamma_{\u_{1}}^{(n)}}{\gamma_{\u_{0}}^{(n)}}=\frac{\gamma_{\u}^{(n)}}{\gamma_{\a}^{(n)}}.
	$$
	This completes the proof of the lemma.
\end{proof}

\medskip

\noindent
{\textbf{Proof of Theorem \ref{mainthm2}}}: Recall the invertible elements $\{\Phi_{\t}|\t\in\Std(\bmu)\}$ of $\HH_q(\Sym_{n-1})$ defined in Lemma \ref{M5prop}. By Lemma \ref{M5prop}, for $\s,\t\in\Std(\bmu)$, $\mff_{\s\t}^{(n-1)}=\Phi_{\s}^*\mff_{\t^\bmu\t^\bmu}^{(n-1)}\Phi_{\t}$. Applying Lemma \ref{fst3}, we get that
	$$\mff_{\t^{\bmu}\t^{\bmu}}^{(n-1)}=\sum_{\blam\in\P_n}\sum_{\substack{\a\in\Std(\blam)\\ \a\downarrow_{n-1}=\t^{\bmu}}}\frac{\gamma_{\t^{\bmu}}^{(n-1)}}{\gamma_{\a}^{(n)}}\mff_{\a\a}^{(n)}.$$

Let $\blam\in\P_{n}$. Note that $d(\s), d(\t)\in\Sym_{n-1}$. For any $\a\in \Std(\blam)$ satisfying $\a\downarrow_{n-1}=\t^{\bmu}$, it is clear that $\a d(\s), \a d(\t)\in \Std(\blam)$ and $\a d(\s)\downarrow_{n-1}=\s, \a d(\t)\downarrow_{n-1}=\t$. Therefore, it follows from the definitions of $\Phi_\s, \Phi_\t$ and Lemma \ref{seminormalB} that  $\Phi_\s^* \mff_{\a\a}^{(n)}\Phi_{\t}=\mff_{\a d(\s)\a d(\t)}^{(n)}$. Thus we have that
	$$\begin{aligned}
	\mff_{\s\t}^{(n-1)}&=\Phi_\s^*\mff_{\t^\bmu\t^\bmu}^{(n-1)}\Phi_\t\\
	&=\Phi_{\s}^*\bigg(\sum_{\blam\in\P_n}\sum_{\substack{\a\in\Std(\blam)\\ \a\downarrow_{n-1}=\t^{\bmu}}}\frac{\gamma_{\t^{\bmu}}^{(n-1)}}{\gamma_{\a}^{(n)}}\mff_{\a\a}^{(n)}\bigg)\Phi_{\t}\qquad\text{(By Lemma \ref{fst3})}\\
	&=\sum_{\blam\in\P_n}\sum_{\substack{\a\in\Std(\blam)\\ \a\downarrow_{n-1}=\t^{\bmu}}}\frac{\gamma_{\t^{\bmu}}^{(n-1)}}{\gamma_{\a}^{(n)}}\mff_{\a d(\s)\a d(\t)}^{(n)}\\
	&=\sum_{\blam\in\P_n}\sum_{\substack{\u,\v\in\Std(\blam)\\ \u\downarrow_{n-1}=\s\\ \v\downarrow_{n-1}=\t}}\frac{\gamma_{\t^{\bmu}}^{(n-1)}}{\gamma_{\a}^{(n)}}\mff_{\u\v}^{(n)},
	\end{aligned}$$
where the last equality follows because $\a d(\s)$ is the unique $\u\in\Std(\blam)$ satisfying $\u\downarrow_{n-1}=\s$, and $\a d(\t)$ is the unique $\v\in\Std(\blam)$ satisfying $\v\downarrow_{n-1}=\t$.	
Since $\frac{\gamma_{\t^{\bmu}}^{(n-1)}}{\gamma_{\a}^{(n)}}\in K^\times$, the above equality also implies that $\beta_{\u\v}^{\s\t}\neq 0$ if and only if $\u\downarrow_{n-1}=\s, \v\downarrow_{n-1}=\t$.

Finally, combining the above equality and Lemma \ref{gammann+1}, we can deduce that
	$$\beta_{\u\v}^{\s\t}=\frac{\gamma_{\t^{\bmu}}^{(n-1)}}{\gamma_{\a}^{(n)}}=\frac{\gamma_{\s}^{(n-1)}}{\gamma_{\u}^{(n)}}=\frac{\gamma_{\t}^{(n-1)}}{\gamma_{\v}^{(n)}}. $$
This completes the proof of the theorem.\qed
\medskip

%

\bigskip

\section{The degenerate case}

Let $\bu=(u_1,\cdots,u_\ell)$, where $u_1,\cdots,u_\ell \in K$. Let $H_{\ell,n}(\bu)$ be the degenerate cyclotomic Hecke algebra over $R$ with cyclotomic parameters $u_1,\cdots,u_\ell$. The purpose of this section is to give a proof of Theorem \ref{mainthm1b} and Theorem \ref{mainthm2b}. The argument of the proof is similar to the non-degenerate case. Throughout this section, we shall assume (\ref{ss0b}) holds. In particular, (\ref{separacontent1b}) holds and $H_{\ell,n}(\bu)$ is semisimple over $K$.


Let $\blam\in\P_n$. For any $\t=(\t^{(1)},\cdots,\t^{(\ell)})\in\Std(\blam)$ and any $1\leq k\leq n$, we define $$
\wres_{\t}(k)=j-i+u_c,\quad \text{if $k$ appears in row $i$ and column $j$ of $\t^{(c)}$}
$$
We also define $C(k):=\bigl\{\wres_{\t}(k)\bigm| \t\in\Std(\blam), \blam\in\P_n\bigr\}$.

\begin{dfn}(\cite[Definition 6.7]{AMR})\label{Ft2} Suppose (\ref{ss0b}) holds and $R=K$ is a field. Let $\blam\in\P_n$ and $\t\in\Std(\blam)$. We define
	$$\rmF_{\t}=\prod\limits^n\limits_{k=1}\prod\limits_{\substack{c\in C(k)\\c\neq \wres_{\t}(k)}}\frac{L_k-c}{\wres_{\t}(k)-c}.$$
\end{dfn}


\begin{dfn}{\rm (\cite[Lemma 6.10]{AMR})}\label{gamma2} Suppose (\ref{ss0b}) holds and $R=K$ is a field. Let $\blam\in\P_n$. The $r$-coefficients $\{\wgamma_\t|\t\in\Std(\blam),\blam\in\P_n\}$ are defined to be a multiset of invertible scalars in $K^\times$ which are uniquely determined by:
	\begin{enumerate}
		\item $\wgamma_{\t^{\blam}}=\Bigl(\prod\limits_{c=1}^\ell\prod\limits_{i\geq1}(\lambda^{(c)}_i)!\Bigr)\prod\limits_{1\leq s<t\leq\ell}\prod\limits_{1\leq j\leq \lambda_i^{(s)}}(j-i+u_s-u_t)$; and
		\item if $\s=\t(i,i+1)\triangleright\t$ then $$ \frac{\wgamma_{\t}}{\wgamma_{\s}}=\frac{(\wres_{\s}(i)-\wres_{\t}(i)+1)(\wres_{\s}(i)-\wres_{\t}(i)-1)}{(\wres_{\s}(i)-\wres_{\t}(i))^2}. $$
	\end{enumerate}
\end{dfn}


Let $\{\rmm_{\s\t}\ |\ \s,\t \in \Std(\blam), \blam\in\P_n\}$ be the cellular basis of $H_{\ell,n}(\bu)$ introduced in Section 2. Let $\{\rmf_{\s\t}\ |\ \s,\t \in \Std(\blam), \blam\in\P_n\}$ be the corresponding  seminormal basis of $H_{\ell,n}(\bu)$. For each $\lam\in\P_n$, we define $$
H_{\ell,n}^{\rhd\blam}:=\text{Span}_{R}\{\rmm_{\s\t}|\s,\t\in\Std(\bmu), \blam\lhd\bmu\in\P_n\},
$$
which is a cell ideal of $H_{\ell,n}(\bu)$ with respect to the cellular basis. For any $1\leq k\leq n$ and $\s,\t\in\Std(\blam)$, we have that \begin{equation}\label{L12}
\rmm_{\s\t}L_k=\wres_{\t}(k)\rmm_{\s\t}+\sum_{\substack{\v\in\Std(\blam)\\ \v\triangleright\t}} a_{\v}\rmm_{\s\v}\pmod{H_{\ell,n}^{\rhd\blam}},
 \end{equation}
where $a_\v\in K$ for each $\t\lhd\v\in\Std(\blam)$.

The dual seminormal basis of $H_{\ell,n}(\bu)$ can be constructed in the same manner as that of the non-degenerate cyclotomic Hecke algebras $\HH_{\ell,n}(q,\bQ)$. First, we recall the construction of the dual cellular basis for $H_{\ell,n}(\bu)$. Let $\blam\in\P_n$. We define  \begin{equation}
\rmn_{\t_\blam\t_\blam}:=(-1)^{\mathbf{n}(\blam)}\Bigl(\sum_{w\in\Sym_{\blam'}}(-1)^{\ell(w)}w\Bigr)\Bigl(\prod_{k=1}^{n}
\Bigl(\prod_{s=2}^{\ell}\prod_{k=1}^{|\lam^{(\ell)}|+|\lam^{(\ell-1)}|+\cdots+|\lam^{(\ell-s+2)}|}(L_k-u_{\ell-s+1})\Bigr),
\end{equation}
where $\mathbf{n}(\blam):=\sum_{i=1}^{\ell}(i-1)|\lam^{(i)}|$.

Recall that for any $\t\in\Std(\blam)$, $d'(\t)\in\Sym_n$ is such that $\t_\blam d'(\t)=\t$. For any  $\s,\t\in\Std(\blam)$, we define \begin{equation}\label{nst2}
\rmn_{\s\t}:=(-1)^{\ell(d'(\s))+\ell(d'(\t))}d'(\s)^{-1}\rmn_{\t_\blam\t_\blam}d'(\t). \end{equation}
Then, with respect to the poset $(\P_n,\unlhd)$ and the anti-involution ``$\ast$'', $\{\rmn_{\s\t}\ |\ \s,\t \in \Std(\blam), \blam\in\P_n\}$ forms another cellular basis of $H_{\ell,n}(\bu)$. We call it the {\bf dual cellular basis} of $H_{\ell,n}(\bu)$.

For each $\lam\in\P_n$, we define $$
\check{H}_{\ell,n}^{\lhd\blam}:=\text{$K$-Span}\{\rmn_{\s\t}|\s,\t\in\Std(\bmu), \blam\rhd\bmu\in\P_n\},
$$
which is a cell ideal of $H_{\ell,n}(\bu)$ related to the dual cellular basis. For any $1\leq k\leq n$ and $\s,\t\in\Std(\blam)$, we have that \begin{equation}\label{L22}
\rmn_{\s\t}L_k=\wres_{\t}(k)\rmn_{\s\t}+\sum_{\substack{\v\in\Std(\blam)\\ \v\lhd\t}} b_{\v}\rmn_{\s\v} \pmod{\check{H}_{\ell,n}^{\lhd\blam}}, \end{equation}
where $b_\v\in K$ for each $\t\rhd\v\in\Std(\blam)$.

\begin{dfn}\label{gdfn2} Suppose (\ref{ss0b}) holds and $R=K$ is a field. Let $\blam\in\P_n$. For any $\s,\t\in\Std(\blam)$, we define $$\rmg_{\s\t}:=\rmF_{\s}\rmn_{\s\t}\rmF_{\t}. $$
\end{dfn}

\begin{dfn}\text{(\cite[Definition 2.9]{Zh})}\label{prime2} Suppose that $\hat{u}_1,\cdots,\hat{u}_\ell$ are indeterminates over $\Z$. We set $\mathscr{A}_1:=\Z[\hat{u}_1,\cdots,\hat{u}_\ell]$ and
$\mathscr{K}_1:=\Q(\hat{u}_1,\cdots,\hat{u}_\ell)$. Let $H_{\ell,n}(\hat{\bu})$ be the degenerate cyclotomic Hecke algebra of type $G(\ell,1,n)$ over $\mathscr{A}_1$ with cyclotomic parameters $\hat{\bu}:=(\hat{u}_1,\cdots,\hat{u}_\ell)$. Set $H^{\mathscr{K}_1}_{\ell,n}(\hat{\bu}):=\mathscr{K}_1\otimes_{\mathscr{A}_1}H_{\ell,n}(\hat{\bu})$. It is clear that $H^{\mathscr{K}_1}_{\ell,n}(\hat{\bu})$ is semisimple.
In this case, we set $'$ to be the unique ring involution  of $H_{\ell,n}(\hat{\bu})$ which is defined on generators by $$
\hat{s}'_i:=-\hat{s}_i,\,\, {L}'_m:=-{L}_m,\,\,\hat{u}'_j:=-\hat{u}_{\ell-j+1},\quad 1\leq i<n,\, 1\leq m\leq n, 1\leq j\leq\ell .
$$
\end{dfn}

Clearly, $'$ naturally extends to a ring involution  of $H^{\mathscr{K}_1}_{\ell,n}(\hat{\bu})$. In particular, in this case $\fm'_{\s\t}=\fn_{\s'\t'}$, $(c_\t(k))'=-c_{\t'}(k)$ for any $1\leq k\leq n$. It follows from Definition \ref{Ft2} that \begin{equation}\label{2primes2}
\rmF'_\t=\rmF_{\t'},\quad \rmf'_{\s\t}=(\rmF_{\s}\rmm_{\s\t}\rmF_{\t})'=\rmF'_{\s}\rmm'_{\s\t}\rmF'_{\t}=\rmF_{\s'}\rmn_{\s'\t'}\rmF_{\t'}=\rmg_{\s'\t'}.
\end{equation}
For any rational function $f$ on $\hat{u}_1,\cdots,\hat{u}_\ell$, $f'$ is the rational function obtained from $f$ by substituting $\hat{u}_i$ with $-\hat{u}_{\ell-i+1}$ for each $1\leq i\leq\ell$.

By Definition \ref{gamma1}, for each $\t\in\Std(\blam)$, $\wgamma_\t$ is given by the evaluation of a rational function $\wgamma_\t(\hat{u}_1,\cdots,\hat{u}_\ell)$ at $\hat{u}_i:=u_i$, $1\leq i\leq\ell$. Thus the notation $\wgamma'_\t:=1_K\otimes_{\mathscr{A}_1}{\wgamma}'_{\t}(\hat{u}_1,\cdots,\hat{u}_\ell)$ makes sense.

\begin{rem}\label{convention2} Note that our notations $\rmn_{\s\t}, \rmg_{\s\t}$ differ with the corresponding notations in \cite{Zh} by a conjugation. Namely, the readers should identify the elements $\rmn_{\s\t}, \rmg_{\s\t}$ in the current paper with the elements $n_{\s'\t'}, g_{\s'\t'}$ in \cite{Zh}. In particular, our dual cellular basis $\{\rmn_{\s\t}\}$ use the partial order $\unlhd$, while \cite{Zh} use the opposite partial order $\unrhd$ for the dual cellular basis.
\end{rem}

The following corollary and lemma can be proved in a similar way as in the non-degenerate case.

\begin{cor}\label{obobn22}
	Suppose (\ref{ss0b}) holds and $R=K$ is a field. Then \begin{equation}\label{dualsem2}\{\rmg_{\s\t}\ |\ \s,\t \in \Std(\blam), \blam\in\P_n\}\end{equation} is a basis of $H_{\ell,n}(\bu)$. Moreover,
\begin{enumerate}
\item[1)] if $\s, \t, \u$ and $\v$ are standard tableaux, then $\rmg_{\s\t}\rmg_{\u\v}=\delta_{\t\u}\wgamma'_{\t'}\rmg_{\s\v}$;
\item[2)] if $\blam\in\P_n$, $\s,\t\in\Std(\blam)$ and $1\leq k\leq n$, then $\rmg_{\s\t}L_k=c_\t(k)\rmg_{\s\t}$, $L_k\rmg_{\s\t}=c_\s(k)\rmg_{\s\t}$;
\item[3)] for each $\lam\in\P_n$ and $\t\in\Std(\blam)$, $\rmF_{\t}=\rmg_{\t\t}/r'_{\t'}$;
\end{enumerate}
\end{cor}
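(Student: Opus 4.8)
The plan is to deduce everything from Lemma~\ref{obobn2} by transporting it through the ring involution $'$ of Definition~\ref{prime2}, exactly the way Corollary~\ref{obobn2cor} was obtained from Lemma~\ref{obobn} via (\ref{2primes}). As in the proof of Theorem~\ref{mainthm1}, I would first pass to the generic algebra $H^{\mathscr{K}_1}_{\ell,n}(\hat{\bu})$ in which $\hat{u}_1,\dots,\hat{u}_\ell$ are indeterminates over $\Z$ and on which $'$ is defined, verify the identities there, and then specialise along $\hat{u}_i\mapsto u_i$. This specialisation is legitimate because (\ref{ss0b}), equivalently (\ref{separacontent1b}), forces all the denominators occurring in $\rmF_{\t}$ --- and hence in $\rmf_{\s\t}$ and in $\rmg_{\s\t}=\rmF_{\s}\rmn_{\s\t}\rmF_{\t}$ --- to be invertible in $K$.

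For the basis statement, I would use that $'$ is a ring involution of $H^{\mathscr{K}_1}_{\ell,n}(\hat{\bu})$ together with the identity (\ref{2primes2}), $\rmf'_{\s\t}=\rmg_{\s'\t'}$: the image under $'$ of the basis $\{\rmf_{\s\t}\}$ of Lemma~\ref{obobn2} is the set $\{\rmg_{\s'\t'}\}$, which equals $\{\rmg_{\s\t}\mid\s,\t\in\Std(\blam),\blam\in\P_n\}$ since $\t\mapsto\t'$ is a bijection $\Std(\blam)\to\Std(\blam')$; hence it is a basis, and specialisation gives the claim over $K$. Part~1) should follow by applying $'$ to $\rmf_{\s\t}\rmf_{\u\v}=\delta_{\t\u}r_{\t}\rmf_{\s\v}$, which gives $\rmg_{\s'\t'}\rmg_{\u'\v'}=\delta_{\t\u}r'_{\t}\rmg_{\s'\v'}$, and then relabelling $\s,\t,\u,\v$ by their conjugates, using $\delta_{\t'\u'}=\delta_{\t\u}$ (conjugation is injective) and $r_{\t}=r_{(\t')'}$, to land on $\rmg_{\s\t}\rmg_{\u\v}=\delta_{\t\u}r'_{\t'}\rmg_{\s\v}$. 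Part~3) is the same: applying $'$ to $\rmF_{\t}=\rmf_{\t\t}/r_{\t}$ of Lemma~\ref{obobn2}~3) and relabelling $\t\mapsto\t'$ yields $\rmF_{\t}=\rmg_{\t\t}/r'_{\t'}$.

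The one point requiring a little care is part~2), because $'$ acts nontrivially on the Jucys--Murphy elements: $L'_k=-L_k$ and, correspondingly, $(c_\t(k))'=-c_{\t'}(k)$. Applying $'$ to $\rmf_{\s\t}L_k=c_\t(k)\rmf_{\s\t}$ then gives $-\rmg_{\s'\t'}L_k=-c_{\t'}(k)\rmg_{\s'\t'}$, so the two sign changes cancel and one recovers $\rmg_{\s'\t'}L_k=c_{\t'}(k)\rmg_{\s'\t'}$; relabelling $\s,\t$ by their conjugates gives $\rmg_{\s\t}L_k=c_\t(k)\rmg_{\s\t}$, and the left-handed identity is obtained identically from $L_k\rmf_{\s\t}=c_\s(k)\rmf_{\s\t}$. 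So I do not expect a genuine obstacle here: the proof is a bookkeeping exercise in pushing Lemma~\ref{obobn2} through $'$, and the only things worth flagging are the cancellation of the signs just mentioned and the routine verification that the passage from indeterminate parameters down to the field $K$ is well-defined.
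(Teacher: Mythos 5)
Your proposal is correct and follows essentially the same route as the paper: the paper obtains Corollary~\ref{obobn22} by applying the ring involution $'$ of Definition~\ref{prime2} together with (\ref{2primes2}) to Lemma~\ref{obobn2}, exactly mirroring how Corollary~\ref{obobn2cor} was deduced from (\ref{2primes}) and Lemma~\ref{obobn}. Your extra remarks on the sign cancellation for the Jucys--Murphy elements in part~2) and on the specialisation from indeterminate parameters to $K$ are just the bookkeeping the paper leaves implicit.
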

We call (\ref{dualsem2}) the {\bf dual seminormal basis} of $H_{\ell,n}(\bu)$ corresponding to the dual cellular basis $\{\rmn_{\s\t}\ |\ \s,\t \in \Std(\blam), \blam\in\P_n\}$.

Note that in general we have $\wgamma'_\t\neq\wgamma_{\t'}$. For example, if $\ell=1, u_1=0$, $\lam=(2,1)$, $\t=\t^\lam s_2$, then $$
\wgamma_\t=\frac{3\cdot 1\cdot 2}{2^2},\quad \wgamma'_\t=\frac{3\cdot 1\cdot 2}{2^2}\neq\wgamma_{\t'}=2 .
$$

\begin{lem}\label{cor22} Suppose (\ref{ss0b}) holds and $R=K$ is a field. Suppose that $\blam$ is a multipartition of $n$ and $\s,\t\in \Std(\blam)$.
	\begin{enumerate}
		\item[1)] For any standard tableau $\t$, we have
		$$\rmg_{\t'\t'}=\rmf'_{\t\t}=\wgamma_{\t}'\rmF_{\t'}=\frac{\wgamma_{\t}'}{\wgamma_{\t'}}\rmf_{\t'\t'}.$$
		\item[2)] There exists $a_{\s\t}\in K^\times$ such that $\rmg_{\s\t}=a_{\s\t}\rmf_{\s\t}$. Moreover, $a_{\s\t}^2=r_{\s'}'r_{\t'}'/\wgamma_{\s}\wgamma_{\t}$.
	\end{enumerate}
\end{lem}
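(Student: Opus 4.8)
The plan is to imitate, almost line by line, the proof of Lemma~\ref{cor2}, replacing each non-degenerate object by its degenerate counterpart: the seminormal basis $\{\mff_{\s\t}\}$ by $\{\rmf_{\s\t}\}$, the dual seminormal basis $\{\mfg_{\s\t}\}$ by $\{\rmg_{\s\t}\}$, the primitive idempotents $F_\t$ by $\rmF_\t$, the Jucys--Murphy operators $\mL_k$ by the $L_k$, the residues $\res_\t(k)$ by $\wres_\t(k)$, the $\gamma$-coefficients by the $r$-coefficients $\wgamma_\t$, the semisimplicity consequence (\ref{separacontent1}) by (\ref{separacontent1b}), and the ring involution $'$ of Definition~\ref{prime} by the one of Definition~\ref{prime2}. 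All the structural inputs are already in place: Lemma~\ref{obobn2}, Corollary~\ref{obobn22}, the identities $\rmF'_\t=\rmF_{\t'}$ and $\rmf'_{\s\t}=\rmg_{\s'\t'}$ recorded in (\ref{2primes2}), and the facts that the anti-involution $*$ fixes each $L_k$, commutes with $'$, and is compatible with both the cellular basis $\{\rmm_{\s\t}\}$ and the dual cellular basis $\{\rmn_{\s\t}\}$ (so that $\rmf_{\s\t}^*=\rmf_{\t\s}$ and $\rmg_{\s\t}^*=\rmg_{\t\s}$).

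For Part~1), I would begin from (\ref{2primes2}), which gives $\rmg_{\t'\t'}=\rmf'_{\t\t}$. By Lemma~\ref{obobn2} 3) one has $\rmf_{\t\t}=\wgamma_\t\rmF_\t$; applying the involution $'$ and invoking $\rmF'_\t=\rmF_{\t'}$ from (\ref{2primes2}) yields $\rmf'_{\t\t}=\wgamma'_\t\rmF_{\t'}$. Finally Lemma~\ref{obobn2} 3), now applied to $\t'$, gives $\rmF_{\t'}=\rmf_{\t'\t'}/\wgamma_{\t'}$, and the displayed chain of equalities in Part~1) follows; its middle equality is of course also Corollary~\ref{obobn22} 3).

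For Part~2), the first step is to see that $a_{\s\t}:=\rmg_{\s\t}/\rmf_{\s\t}$ is a well-defined element of $K^\times$: by Lemma~\ref{obobn2} 2) and Corollary~\ref{obobn22} 2), both $\rmf_{\s\t}$ and $\rmg_{\s\t}$ are simultaneous eigenvectors for left, respectively right, multiplication by every $L_k$, with eigenvalue $\wres_\s(k)$, respectively $\wres_\t(k)$; by the separation-of-residues statement (\ref{separacontent1b}) this joint eigenspace is one-dimensional, spanned by the nonzero vector $\rmf_{\s\t}$, so $\rmg_{\s\t}$ is a nonzero scalar multiple of $\rmf_{\s\t}$ and $a_{\s\t}$ is the unique such scalar. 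Applying $*$ to $\rmg_{\s\t}=a_{\s\t}\rmf_{\s\t}$ and using $\rmf_{\s\t}^*=\rmf_{\t\s}$, $\rmg_{\s\t}^*=\rmg_{\t\s}$ gives $a_{\t\s}=a_{\s\t}$. Then I would evaluate the product $\rmg_{\s\t}\rmg_{\t\s}$ in two ways: by Corollary~\ref{obobn22} 1) it equals $\wgamma'_{\t'}\,\rmg_{\s\s}$, while substituting $\rmg_{\s\t}=a_{\s\t}\rmf_{\s\t}$, $\rmg_{\t\s}=a_{\t\s}\rmf_{\t\s}$, recalling $a_{\s\t}=a_{\t\s}$ and using Lemma~\ref{obobn2} 1) it equals $a_{\s\t}^2\,\wgamma_\t\,\rmf_{\s\s}$. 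Dividing by $\rmf_{\s\s}$ and inserting the value $\rmg_{\s\s}/\rmf_{\s\s}=\wgamma'_{\s'}/\wgamma_\s$ supplied by Part~1) (with $\t$ replaced by $\s'$) gives $a_{\s\t}^2\wgamma_\t=\wgamma'_{\t'}\wgamma'_{\s'}/\wgamma_\s$, that is, $a_{\s\t}^2=\wgamma'_{\s'}\wgamma'_{\t'}/(\wgamma_\s\wgamma_\t)$, which is the asserted formula $r'_{\s'}r'_{\t'}/(\wgamma_\s\wgamma_\t)$.

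I expect no genuine obstacle, since the argument is a faithful transcription of the non-degenerate one; the only point deserving care is the verification that the degenerate ring involution $'$ of Definition~\ref{prime2}, with its sign changes $\hat{s}_i\mapsto-\hat{s}_i$, $L_m\mapsto-L_m$ and $\hat{u}_j\mapsto-\hat{u}_{\ell-j+1}$, genuinely satisfies $\rmm'_{\s\t}=\rmn_{\s'\t'}$ and $(\wres_\t(k))'=-\wres_{\t'}(k)$ --- but these are precisely the identities recorded immediately before (\ref{2primes2}) and may be quoted. A secondary bookkeeping point, already subsumed in Corollary~\ref{obobn22}, is that the extra sign $(-1)^{\mathbf{n}(\blam)}$ appearing in the definition of $\rmn_{\t_\blam\t_\blam}$ is exactly what makes $\rmF_\s\rmn_{\s\t}\rmF_\t$ the correct degenerate dual seminormal basis with the multiplicative behaviour used above.
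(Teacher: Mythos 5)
Your proposal is correct and is essentially the paper's own argument: the paper proves this lemma exactly by transcribing the proof of Lemma \ref{cor2} to the degenerate setting (Part 1 from (\ref{2primes2}) together with Lemma \ref{obobn2} 3), and Part 2 by the eigenvalue/separation argument from (\ref{separacontent1b}) plus applying $*$ and computing $\rmg_{\s\t}\rmg_{\t\s}$ in two ways), which is precisely what you do.
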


For the reader's convenience, we include below a lemma which gives a recursive formula for the $r'$-coefficients associated to the dual seminormal bases.

\begin{lem}
	Suppose (\ref{ss0b}) holds. Let $\blam\in\P_n$. We define a multiset of elements $\{r^{(n)}_\t\in K^\times|\t\in\Std(\blam),\blam\in\P_n\}$ in $K^\times$ as follows:
	\begin{enumerate}
		\item $r'_{(\t_\blam)'}=r'_{\t^{\blam'}}=(-1)^{C}\Bigl(\prod_{l=1}^{\ell}\prod_{i\geq 1}(\lam^{(l)'})_i!\Bigr)\prod\limits_{1\leq t<s\leq\ell}\prod\limits_{1\leq j\leq\lam_i^{(s)}}(j-i+u_s-u_t)$, where $$
C=\sum\limits_{1<s\leq\ell}\sum\limits_{{i\geq 1}}(s-1)\lam_i^{(s)}; $$ and
		\item if $\s=\t(i,i+1)\triangleleft\t$ then $$ \frac{r'_{\t'}}{r'_{\s'}}=\frac{(1+c_{\t}(i)-c_{\s}(i))(c_{\t}(i)-c_{\s}(i)-1)}{(c_{\t}(i)-c_{\s}(i))^2}=\frac{r_{\s}}{r_{\t}}. $$
	\end{enumerate}
\end{lem}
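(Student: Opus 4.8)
The plan is to follow the same strategy as for the non-degenerate analogue (whose proof reduces to Definition~\ref{prime} and the identity $(\res_\t(k))'=\res_{\t'}(k)$). Here the corresponding inputs are Definition~\ref{prime2}, the identity $(c_\t(k))'=-c_{\t'}(k)$ recorded just after it, and the relation $\hat{u}_j'=-\hat{u}_{\ell-j+1}$; I would also use that conjugation of tableaux commutes with the right action of the transpositions $(i,i+1)$ and reverses the dominance order, so that $\s\triangleleft\t$ forces $\s'\triangleright\t'$. Since both sides of the two assertions are rational expressions in the cyclotomic parameters, there is no loss in assuming $\hat{u}_1,\dots,\hat{u}_\ell$ are indeterminates over $\Z$ and working inside $H_{\ell,n}(\hat{\bu})$, where the ring involution $'$ is available; the whole argument then just transports the recursive description of the $r$-coefficients in Definition~\ref{gamma2} through $'$.

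For part (2) both equalities should be essentially immediate. Write $x=c_\t(i)-c_\s(i)$. Given $\s=\t(i,i+1)\triangleleft\t$ we also have $\t=\s(i,i+1)\triangleright\s$, so Definition~\ref{gamma2}(2) applied to the pair $(\t,\s)$ computes $r_\s/r_\t=\frac{(1+x)(x-1)}{x^2}$, which is the right-hand equality. For the left-hand one, note $\s'=\t'(i,i+1)\triangleright\t'$, apply Definition~\ref{gamma2}(2) to the pair $(\s',\t')$ to express $r_{\t'}/r_{\s'}$ through $c_{\s'}(i)-c_{\t'}(i)$, and then apply the involution $'$ to this identity: since $(c_{\s'}(i))'=-c_\s(i)$ and $(c_{\t'}(i))'=-c_\t(i)$, the quantity $c_{\s'}(i)-c_{\t'}(i)$ is sent to $x$, and one reads off $r'_{\t'}/r'_{\s'}=\frac{(1+x)(x-1)}{x^2}$.

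For part (1), observe first that $(\t_\blam)'=\bigl((\t^{\blam'})'\bigr)'=\t^{\blam'}$, so the only real content is the closed formula for $r'_{\t^{\blam'}}$. I would start from Definition~\ref{gamma2}(1) applied to the multipartition $\blam'=(\lam^{(\ell)'},\dots,\lam^{(1)'})$, which expresses $r_{\t^{\blam'}}$ as $\prod_{l}\prod_{i}(\lam^{(l)'})_i!$ times $\prod_{1\le s<t\le\ell}\prod_{(i,j)\in[\lam^{(\ell-s+1)'}]}(j-i+\hat{u}_s-\hat{u}_t)$; then apply $'$, which fixes the factorials and replaces $\hat{u}_s-\hat{u}_t$ by $\hat{u}_{\ell-t+1}-\hat{u}_{\ell-s+1}$, reindex the outer product via $s\mapsto\ell-t+1$ and $t\mapsto\ell-s+1$ so that it runs over $1\le t<s\le\ell$ with shape $[\lam^{(s)'}]$ and factor $j-i-(\hat{u}_s-\hat{u}_t)$, and finally replace each box $(i,j)\in[\lam^{(s)'}]$ by the transposed box of $[\lam^{(s)}]$, whose content is the negative, i.e. rewrite $\prod_{(i,j)\in[\lam^{(s)'}]}\bigl(j-i-(\hat{u}_s-\hat{u}_t)\bigr)=(-1)^{|\lam^{(s)}|}\prod_{(i,j)\in[\lam^{(s)}]}(j-i+\hat{u}_s-\hat{u}_t)$. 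Collecting the signs $(-1)^{|\lam^{(s)}|}$ over all pairs $t<s$ yields the global factor $(-1)^{\sum_{1<s\le\ell}(s-1)|\lam^{(s)}|}=(-1)^C$, matching the asserted formula. The step I expect to be the main obstacle is exactly this bookkeeping: making the reindexing $s\leftrightarrow\ell+1-s$, the conjugation of each component shape, and the content sign change fit together so that the accumulated sign collapses precisely to $(-1)^C$; everything else is routine. Finally, the well-definedness of the multiset $\{r'_\t\}$ claimed by the lemma is inherited from that of $\{r_\t\}$ in Definition~\ref{gamma2}, each $\u\in\Std(\blam')$ being joined to $\t^{\blam'}$ by a chain of adjacent transpositions along which the recursion in (2) is path-independent.
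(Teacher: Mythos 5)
Your proposal is correct and follows essentially the same route as the paper, which proves the lemma simply by transporting the recursion of Definition~\ref{gamma2} through the ring involution $'$ of Definition~\ref{prime2} together with the content identity $(c_{\t}(k))'=-c_{\t'}(k)$; your write-up merely makes explicit the sign bookkeeping for part (1) that the paper leaves implicit. (Note you use the correct sign $(c_{\t}(k))'=-c_{\t'}(k)$, whereas the paper's one-line proof misprints it without the minus sign.)
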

\begin{proof} This follows from Definition \ref{prime2} and the equality $(c_{\t}(k))'=c_{\t'}(k)$.
\end{proof}

\begin{lem}{\rm (\cite[Lemma 3.8]{HuMathas:SeminormalQuiver})}\label{seminorma2B} Suppose (\ref{ss0b}) holds and $R=K$ is a field. Let $\blam\in\P_n$ and $\s,\u\in\Std(\blam)$. Let $i,m$ be integers with $1\leq i< n, 1\leq m\leq n$ and $\t:=\s(i,i+1)$. If $\t$ is standard then
	$$\begin{aligned}\rmf_{\u\s}s_i&=\begin{cases*}
		a_i(\s)\rmf_{\u\s}+\rmf_{\u\t}, &\text{if $\t\triangleleft\s$},\\
		a_i(\s)\rmf_{\u\s}+b_i(\s)\rmf_{\u\t}, &\text{if $\s\triangleleft\t$},
	\end{cases*}\\
\rmf_{\u\s}L_m&=c_\s(m)\rmf_{\u\s},
\end{aligned}
$$
where $$
a_i(\s)=\frac{1}{c_\s(i+1)-c_\s(i)},\quad b_i(\s):={\wgamma_\s}/{\wgamma_\t}=\frac{(c_{\s}(i)-c_{\s}(i+1)+1)(c_{\s}(i)-c_{\s}(i+1)-1)}{(c_{\s}(i+1)-c_{\s}(i))^2}.
$$
If $\t$ is not standard then
	$$\rmf_{\u\s}s_i=\begin{cases*}
		\rmf_{\u\s}, &\text{if $i$ and $i+1$ are in the same row of $\s$},\\
		-\rmf_{\u\s}, &\text{if $i$ and $i+1$ are in the same column of $\s$}.
	\end{cases*}$$
\end{lem}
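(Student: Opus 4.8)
The plan is to follow, essentially verbatim, Mathas's proof of the non-degenerate analogue Lemma \ref{seminormalB}, which is the argument of \cite[Lemma 3.8]{HuMathas:SeminormalQuiver}. The relation $\rmf_{\u\s}L_m=c_\s(m)\rmf_{\u\s}$ is immediate from Lemma \ref{obobn2} 2), so the whole content is the action of $s_i$, and I would proceed in two stages: first localise $\rmf_{\u\s}s_i$ inside a two-dimensional space, then identify the coefficients.

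For the localisation: the generator $s_i$ commutes with $L_k$ for every $k\notin\{i,i+1\}$ and is fixed by the anti-involution $\ast$. Hence $L_k(\rmf_{\u\s}s_i)=c_\u(k)(\rmf_{\u\s}s_i)$ for all $k$, while $(\rmf_{\u\s}s_i)L_k=c_\s(k)(\rmf_{\u\s}s_i)$ for $k\notin\{i,i+1\}$. Expanding $\rmf_{\u\s}s_i$ in the seminormal basis and using Lemma \ref{obobn2} 2) together with the content-separation property (\ref{separacontent1b}), the first index must be $\u$ and the second must be a standard $\blam$-tableau whose contents agree with those of $\s$ at every position $\neq i,i+1$; the only such tableaux are $\s$ itself and, when it is standard, $\t=\s(i,i+1)$. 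Thus $\rmf_{\u\s}s_i=a\,\rmf_{\u\s}+b\,\rmf_{\u\t}$, the $\rmf_{\u\t}$-term being absent precisely when $\t\notin\Std(\blam)$.

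For the coefficients I would play off two of the defining relations. Rewriting $L_{i+1}=s_iL_is_i+s_i$ as $L_{i+1}s_i=s_iL_i+1$, applying it to $\rmf_{\u\s}$, and expanding both sides via Lemma \ref{obobn2} 2), the coefficient of $\rmf_{\u\s}$ yields $a\bigl(c_\s(i+1)-c_\s(i)\bigr)=1$, i.e.\ $a=a_i(\s)$; when $\t$ is not standard this already forces $a=+1$ if $i,i+1$ lie in the same row of $\s$ and $a=-1$ if they lie in the same column, so that case is complete. To fix the off-diagonal coefficient $b$: directly from Definition \ref{cellularbases1} one has $\rmm_{\u\t}=\rmm_{\u\s}s_i$ (since $d(\t)=d(\s)s_i$), and since the transition matrix from the $\rmm$-basis to the seminormal basis is unitriangular for the dominance order on pairs of tableaux, comparing the $\rmm_{\u\t}$-coordinate on the two sides of $\rmf_{\u\s}s_i=a\rmf_{\u\s}+b\rmf_{\u\t}$ in the case $\t\lhd\s$ forces $b=1$ (here $\u\t\lhd\u\s$, so $\rmm_{\u\t}$ does not occur in $\rmf_{\u\s}=\rmm_{\u\s}+(\text{strictly }\rhd\text{-terms})$, and, using $\rmm_{\v\mathfrak{w}}s_i=\rmm_{\v,\mathfrak{w}(i,i+1)}$, none of the correction terms land in the $\rmm_{\u\t}$-slot after right multiplication by $s_i$). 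Applying this to every pair yields the coefficient $1$ in the lowering direction; then $s_i^2=1$ applied to $\rmf_{\u\s}$ gives $a^2+b\,b'=1$, where $b'$ is the off-diagonal coefficient of $\rmf_{\u\t}s_i$, and in the raising case $\s\lhd\t$ one has $b'=1$, whence $b=1-a_i(\s)^2$, which by Definition \ref{gamma2} 2) equals $\wgamma_\s/\wgamma_\t=b_i(\s)$.

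I expect the one genuinely delicate point to be the $\rmm$-versus-$\rmf$ comparison that pins the off-diagonal coefficient to $1$ precisely in the lowering direction: one has to verify carefully that the unitriangularity of the change of basis, together with the identity $\rmm_{\v\mathfrak{w}}s_i=\rmm_{\v,\mathfrak{w}(i,i+1)}$, really does prevent the more dominant correction terms of $\rmf_{\u\s}$ from contributing to the relevant coordinate. By contrast, the localisation step and the two quadratic constraints $L_{i+1}s_i=s_iL_i+1$ and $s_i^2=1$ are routine; in particular the non-standard case is handled entirely by the first of them.
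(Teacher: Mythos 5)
The paper itself gives no proof of this lemma --- it is quoted from \cite[Lemma 3.8]{HuMathas:SeminormalQuiver}, with Lemma \ref{seminormalB} (\cite[Proposition 2.7]{Ma}) as its non-degenerate counterpart --- so your argument can only be compared with the standard one in those sources, which it essentially reproduces. All of your explicit steps check out: the localisation of $\rmf_{\u\s}s_i$ to the span of $\rmf_{\u\s}$ and $\rmf_{\u\t}$ via the $L_k$-eigenvalues, Lemma \ref{obobn2} 2) and (\ref{separacontent1b}); the determination $a=a_i(\s)$ from $L_{i+1}s_i=s_iL_i+1$, which also disposes of the non-standard case because there $c_\s(i+1)-c_\s(i)=\pm 1$; the constraint from $s_i^2=1$; and the identity $1-a_i(\s)^2=\wgamma_\s/\wgamma_\t=b_i(\s)$ via Definition \ref{gamma2} 2) and $c_\t(i)=c_\s(i+1)$.

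The point you flag is the only genuine one, and as written it is not yet closed: the identity $\rmm_{\v\b}s_i=\rmm_{\v,\b(i,i+1)}$ settles the comparison of $\rmm_{\u\t}$-coordinates only when $\b(i,i+1)$ is standard. For a correction term $\rmm_{\v\b}$ of $\rmf_{\u\s}$ (so $\v\gedom\u$, $\b\gedom\s$, $(\v,\b)\neq(\u,\s)$, modulo $H_{\ell,n}^{\rhd\blam}$), the terms with $\v\gdom\u$ are harmless because right multiplication preserves the first index modulo $H_{\ell,n}^{\rhd\blam}$; if $i$ and $i+1$ lie in the same row of $\b$ then $\rmm_{\u\b}s_i=\rmm_{\u\b}$; but if they lie in the same column of $\b$ the product must be straightened, and you need the standard fact (Murphy's basis theory, \cite{Mur1}, \cite{DJM:cyc}) that $\rmm_{\u\b}(s_i+1)$ is a linear combination of $\rmm_{\u\a}$ with $\a\gdom\b$ modulo $H_{\ell,n}^{\rhd\blam}$. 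Granting that, every second index occurring dominates $\s\gdom\t$, so the $\rmm_{\u\t}$-coefficient of $\rmf_{\u\s}s_i$ is exactly $1$, your lowering-direction coefficient is confirmed, and the rest of the argument goes through; to make the proof complete you should state and cite (or prove) that straightening lemma explicitly rather than leave it as a remark.
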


The following lemma can be proved in the same way as the proof of \cite[Proposition 4.1, Lemma 4.3]{Ma}.

\begin{lem}\label{M5prop2} Suppose (\ref{ss0b}) holds and $R=K$ is a field.
	Let $\blam\in\P_n$ and $i$ an integer with $1\leq i<n$. Then there exist a family of invertible elements $\{\phi_{\t}|\t\in\Std(\blam)\}$ in $K[\Sym_n]$ such that
	\begin{enumerate}
		\item[(i)] for any $\s,\t\in\Std(\blam)$, $\rmf_{\s\t}=\phi_\s^*\rmf_{\t^\blam\t^\blam}\phi_\t$;
		\item[(ii)] $\phi_{\t^\blam}=1$, and if $\s:=\t(i,i+1)\lhd\t$, then $\phi_\s=\phi_\t(s_i-a_i(\t))$.
	\end{enumerate}
\end{lem}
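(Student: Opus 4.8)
The plan is to reproduce the argument of \cite[Proposition 4.1 and Lemma 4.3]{Ma} for the non‑degenerate algebra, translating its multiplicative data into the additive data of $K[\Sym_n]$: the generator $T_i$ is replaced by $s_i$, the scalars $A_i(\s),B_i(\s)$ by the scalars $a_i(\s),b_i(\s)$ of Lemma \ref{seminorma2B}, and each $q$‑deformed expression in the residues $\res_\s(k)$ by the corresponding ordinary difference in the contents $c_\s(k)$. Concretely, fix $\blam\in\P_n$; for $\t\in\Std(\blam)$ choose a reduced expression $d(\t)=s_{i_1}\cdots s_{i_m}$ and set $\t_0:=\t^\blam$ and $\t_k:=\t^\blam s_{i_1}\cdots s_{i_k}$, so that, exactly as in the non‑degenerate case, $\t^\blam=\t_0\rhd\t_1\rhd\cdots\rhd\t_m=\t$ with $\t_k=\t_{k-1}(i_k,i_k+1)\lhd\t_{k-1}$. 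Then put $\phi_{\t^\blam}:=1$ and
\[
\phi_\t:=\bigl(s_{i_1}-a_{i_1}(\t_0)\bigr)\bigl(s_{i_2}-a_{i_2}(\t_1)\bigr)\cdots\bigl(s_{i_m}-a_{i_m}(\t_{m-1})\bigr)\in K[\Sym_n];
\]
each $a_{i_k}(\t_{k-1})$ is defined because $\t_{k-1}(i_k,i_k+1)$ is standard, so $(\ref{ss0b})$ forces $c_{\t_{k-1}}(i_k)\neq c_{\t_{k-1}}(i_k+1)$. It then remains to check three things: that $\phi_\t$ is independent of the reduced expression, that $\phi_\t$ is invertible, and that (i) holds; property (ii) will follow at once.

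Invertibility reduces to invertibility of each elementary factor. If $\s=\t(i,i+1)$ is standard then $c_\s(i)=c_\t(i+1)$ and $c_\s(i+1)=c_\t(i)$, hence $a_i(\s)=-a_i(\t)$, and using $s_i^2=1$,
\[
(s_i-a_i(\t))(s_i-a_i(\s))=1-\bigl(a_i(\t)+a_i(\s)\bigr)s_i+a_i(\t)a_i(\s)=1-a_i(\t)^2=b_i(\s),
\]
where the last equality is the formula for $b_i(\s)$ in Lemma \ref{seminorma2B}. Since $b_i(\s)=r_\s/r_\t\in K^\times$ by $(\ref{ss0b})$, the factor $s_i-a_i(\t)$ is a unit in $K[\Sym_n]$ with inverse $b_i(\s)^{-1}(s_i-a_i(\s))$, so $\phi_\t$ is a unit. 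For (i) I would first prove $\rmf_{\t^\blam\t}=\rmf_{\t^\blam\t^\blam}\phi_\t$ by induction on $\ell(d(\t))$: applying Lemma \ref{seminorma2B} to the pair $(\t_{k-1},\t_k)$ (with $\t_k\lhd\t_{k-1}$) gives $\rmf_{\u\t_{k-1}}s_{i_k}=a_{i_k}(\t_{k-1})\rmf_{\u\t_{k-1}}+\rmf_{\u\t_k}$, so right multiplication by $s_{i_k}-a_{i_k}(\t_{k-1})$ carries $\rmf_{\u\t_{k-1}}$ to $\rmf_{\u\t_k}$, and iterating from $\rmf_{\t^\blam\t^\blam}$ yields the claim. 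Applying the anti‑involution $*$ (which fixes $\rmf_{\t^\blam\t^\blam}$) gives $\rmf_{\s\t^\blam}=\phi_\s^*\rmf_{\t^\blam\t^\blam}$, and then Lemma \ref{obobn2}(1) gives $r_{\t^\blam}\rmf_{\s\t}=\rmf_{\s\t^\blam}\rmf_{\t^\blam\t}=\phi_\s^*(\rmf_{\t^\blam\t^\blam})^2\phi_\t=r_{\t^\blam}\,\phi_\s^*\rmf_{\t^\blam\t^\blam}\phi_\t$, which is (i).

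The remaining, genuinely substantive point is that $\phi_\t$ does not depend on the chosen reduced expression of $d(\t)$. By Matsumoto's theorem this comes down to the two braid‑type relations for the factors $s_j-a_j(-)$. The commuting relation for $|i-j|\geq 2$ is immediate, since $s_i$ and $s_j$ commute in $\Sym_n$ and interchanging $i,i+1$ leaves the contents of $j,j+1$ (and vice versa) unchanged, so $a_j(\s(i,i+1))=a_j(\s)$. The length‑three (hexagon) relation — that the two triple products of factors $s_i-a_i(-)$, $s_{i+1}-a_{i+1}(-)$ taken along the two decreasing chains from a tableau $\s$ to $\s s_is_{i+1}s_i=\s s_{i+1}s_is_{i+1}$ agree — is the content‑theoretic identity of \cite[Lemma 4.3]{Ma}; its verification is formally identical here after replacing $\res$ by $c$ and $q$‑numbers by ordinary differences, and this is the step I expect to be the main obstacle, being a finite but somewhat delicate computation with the three contents $c_\s(i),c_\s(i+1),c_\s(i+2)$. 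Granting it, $\phi_\t$ is well defined, and property (ii) is immediate: for $\s=\t(i,i+1)\lhd\t$ one computes $\phi_\s$ along a chain that first reaches $\t$ and then takes the step $\t\to\s$, which appends exactly the factor $s_i-a_i(\t)$, whence $\phi_\s=\phi_\t(s_i-a_i(\t))$. Everything apart from the hexagon identity is short bookkeeping parallel to the non‑degenerate Lemma \ref{M5prop}.
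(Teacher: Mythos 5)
Your proposal is correct and takes essentially the same route as the paper, whose entire proof is the remark that the lemma "can be proved in the same way as the proof of \cite[Proposition 4.1, Lemma 4.3]{Ma}" — i.e.\ exactly the translation $T_i\mapsto s_i$, $A_i(\cdot)\mapsto a_i(\cdot)$, residues $\mapsto$ contents that you carry out. The one step you leave unverified, the hexagon identity for the factors $s_i-a_i(\cdot)$, does hold by a short computation: if $x,y,z$ are the contents of $i,i+1,i+2$, the three scalars along either chain are $\alpha=1/(y-x)$, $\beta=1/(z-x)$, $\gamma=1/(z-y)$, and equality of the two triple products reduces to $\beta(\alpha+\gamma)=\alpha\gamma$, i.e.\ to $(z-x)=(y-x)+(z-y)$.
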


\begin{lem}\label{phiprime2} Suppose (\ref{ss0b}) holds and $R=K$ is a field. Let $\blam\in\P_n$ and $\t\in\Std(\blam)$. Let $i$ be an integer with $1\leq i<n$. Suppose that $u_1,\cdots,u_\ell$ are indeterminates over $\Z$.
If $\s:=\t(i,i+1)\in\Std(\blam)$ with $\s\lhd\t$, then $$
\phi_\s'=-\phi_\t'(s_i-a_i(\t')). $$
\end{lem}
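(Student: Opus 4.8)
The plan is to mimic line by line the proof of Lemma \ref{phiprime} in the non-degenerate case, substituting the degenerate data $s_i, L_m, u_j$ for $T_i, \mL_k, Q_j$ and the degenerate involution $'$ of Definition \ref{prime2} for the non-degenerate one. First I would recall that, by Lemma \ref{M5prop2}(ii), since $\s=\t(i,i+1)\lhd\t$ we have $\phi_\s=\phi_\t(s_i-a_i(\t))$ in $K[\Sym_n]$. Applying the ring involution $'$ of Definition \ref{prime2}, which sends $\hat{s}_i\mapsto -\hat{s}_i$, $L_m\mapsto -L_m$ and $\hat{u}_j\mapsto -\hat{u}_{\ell-j+1}$, we obtain $\phi_\s'=\phi_\t'\bigl(-s_i-a_i(\t)'\bigr)=-\phi_\t'\bigl(s_i+a_i(\t)'\bigr)$. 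Thus the lemma reduces to the scalar identity $a_i(\t)'=-a_i(\t')$.

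To prove $a_i(\t)'=-a_i(\t')$, I would use the explicit formula $a_i(\s)=1/(c_\s(i+1)-c_\s(i))$ from Lemma \ref{seminorma2B}, together with the key compatibility $(c_\t(k))'=-c_{\t'}(k)$ recorded just after Definition \ref{prime2}. Applying $'$ to $a_i(\t)=1/(c_\t(i+1)-c_\t(i))$ gives
$$
a_i(\t)'=\frac{1}{(c_\t(i+1))'-(c_\t(i))'}=\frac{1}{-c_{\t'}(i+1)+c_{\t'}(i)}=-\frac{1}{c_{\t'}(i+1)-c_{\t'}(i)}=-a_i(\t').
$$
Substituting this back yields $\phi_\s'=-\phi_\t'\bigl(s_i-a_i(\t')\bigr)$, which is the claimed formula.

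The only point needing a little care is the bookkeeping with the involution on the coefficient ring: since $u_1,\dots,u_\ell$ are taken to be indeterminates over $\Z$, the involution $'$ acts genuinely on $\mathscr{A}_1=\Z[\hat u_1,\dots,\hat u_\ell]$ and hence on each $c_\t(k)=j-i+u_c$, and one must check that the sign in $(c_\t(k))'=-c_{\t'}(k)$ is consistent with the passage $\t\mapsto\t'$ (which swaps rows and columns and reverses the order of the components, so that $j-i$ becomes $i-j$ and $u_c$ becomes $u_{\ell-c+1}$, whence $c_\t(k)\mapsto -(j-i)-u_{\ell-c+1}=-c_{\t'}(k)$). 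This is exactly the degenerate analogue of the identity $(\res_\t(k))'=\res_{\t'}(k)$ used in Lemma \ref{phiprime}, and it is the one verification I expect to be the main (and only) obstacle; once it is in hand the rest is a one-line substitution.
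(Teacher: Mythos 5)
Your proposal is correct and follows essentially the same route as the paper: apply the involution to $\phi_\s=\phi_\t(s_i-a_i(\t))$ from Lemma \ref{M5prop2}(ii) and reduce everything to the identity $a_i(\t)'=-a_i(\t')$, which you verify exactly as the paper does using $(c_\t(k))'=-c_{\t'}(k)$ and the formula $a_i(\t)=1/(c_\t(i+1)-c_\t(i))$. The only blemish is a sign slip in your final parenthetical check: one has $(c_\t(k))'=(j-i)-u_{\ell-c+1}$ (not $-(j-i)-u_{\ell-c+1}$) and $c_{\t'}(k)=(i-j)+u_{\ell-c+1}$, so the identity $(c_\t(k))'=-c_{\t'}(k)$ still holds and your displayed computation of $a_i(\t)'$ is unaffected.
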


\begin{proof} To prove the lemma, we can assume without loss of generality that $u_1,\cdots,u_\ell$ are indeterminates over $\Z$. In this case, we can use the ring involution $'$ introduced in \cite[\S3]{Ma} which is defined on generators by $$
s'_i:=-s_i,\,\, L'_m:=-L'_m,\,\,u'_j:=-u_{\ell-j+1},\quad \forall\,1\leq i<n,\, 1\leq m\leq n,\,1\leq j\leq\ell .
$$
By definition, $\phi'_\s=\phi'_\t(-s_i-a_i(\t)')$. Thus it suffices to show that $a_i(\t)'=-a_i(\t')$.

By definition, we have that $(c_{\t}(k))'=-c_{\t'}(k)$. It follows that
	$$a_i(\t)'=\big(\frac{1}{c_\t(i+1)-c_\t(i)}\big)'
			=-\frac{1}{c_{\t'}(i)-c_{\t'}(i+1)}.$$
	 for $1\leq k\leq n$. Hence, we can get that $a_i(\t)'=\frac{1}{c_{\t'}(i+1)-c_{\t'}(i)}=-a_i(\t')$.
This completes the proof of the lemma.
\end{proof}


The following lemma can be proved by using Lemma \ref{phiprime2} and a similar argument used in the proof of Lemma \ref{phiprime}.

\begin{lem}\label{scalaralt2} Suppose (\ref{ss0b}) holds and $R=K$ is a field. Let $\blam\in\P_n$ and $\t\in\Std(\blam)$. Suppose that $u_1,\cdots,u_\ell$ are indeterminates over $\Z$. Then we have $$
\rmf_{\t_{\blam'}\t_{\blam'}}\phi'_{\t}=(-1)^{\ell(d(\t))}\frac{\wgamma_{\t_{\blam'}}}{\wgamma_{\t'}}\rmf_{\t_{\blam'}\t'},\quad
(\phi_{\t}^*)'\rmf_{\t_{\blam'}\t_{\blam'}}=(-1)^{\ell(d(\t))}\frac{\wgamma_{\t_{\blam'}}}{\wgamma_{\t'}}\rmf_{\t'\t_{\blam'}} .
$$
\end{lem}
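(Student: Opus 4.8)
The plan is to follow, almost verbatim, the argument in the proof of Lemma~\ref{scalaralt}, with $T_i$, $\Phi_\t$, $A_i(\s)$, the $\gamma$-coefficients and Lemmas~\ref{M5prop}, \ref{phiprime}, \ref{seminormalB} replaced by $s_i$, $\phi_\t$, $a_i(\s)$, the $r$-coefficients $\wgamma$ and Lemmas~\ref{M5prop2}, \ref{phiprime2}, \ref{seminorma2B} respectively. Since $u_1,\dots,u_\ell$ are assumed to be indeterminates over $\Z$, the ring involution $'$ of $H_{\ell,n}(\bu)$ from Definition~\ref{prime2} is available, and it commutes with the anti-involution $\ast$ (both act on defining generators by scaling).

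First I would fix a reduced expression $d(\t)=s_{i_1}\cdots s_{i_l}$ with $l=\ell(d(\t))$, put $w_k=s_{i_1}\cdots s_{i_k}$, $\t_k:=\t^\blam w_k$ and $\t_0:=\t^\blam$, obtaining the strictly decreasing chain $\t^\blam=\t_0\rhd\t_1\rhd\cdots\rhd\t_l=\t$ of standard $\blam$-tableaux together with its conjugate chain $\t_{\blam'}=\t_0'\lhd\t_1'\lhd\cdots\lhd\t_l'=\t'$ of standard $\blam'$-tableaux, where $\t_k'=\t_{k-1}'s_{i_k}$ for each $k$. Iterating Lemma~\ref{M5prop2}(ii) down the first chain (starting from $\phi_{\t^\blam}=1$) gives
\[
\phi_\t=(s_{i_1}-a_{i_1}(\t_0))(s_{i_2}-a_{i_2}(\t_1))\cdots(s_{i_l}-a_{i_l}(\t_{l-1})),
\]
and applying Lemma~\ref{phiprime2} to each factor, which contributes one sign $-1$ per step, yields the degenerate analogue of~(\ref{phit}), namely
\[
\phi_\t'=(-1)^{\ell(d(\t))}(s_{i_1}-a_{i_1}(\t_0'))(s_{i_2}-a_{i_2}(\t_1'))\cdots(s_{i_l}-a_{i_l}(\t_{l-1}')).
\]

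Then I would left-multiply by $\rmf_{\t_{\blam'}\t_{\blam'}}=\rmf_{\t_{\blam'}\t_0'}$ and evaluate the product one factor at a time. For each $k$, since $\t_{k-1}'\lhd\t_k'=\t_{k-1}'(i_k,i_k+1)$, the second case of Lemma~\ref{seminorma2B} gives
\[
\rmf_{\t_{\blam'}\t_{k-1}'}\bigl(s_{i_k}-a_{i_k}(\t_{k-1}')\bigr)=b_{i_k}(\t_{k-1}')\,\rmf_{\t_{\blam'}\t_k'}=\frac{\wgamma_{\t_{k-1}'}}{\wgamma_{\t_k'}}\,\rmf_{\t_{\blam'}\t_k'}.
\]
The resulting product of $b$-coefficients telescopes to $\wgamma_{\t_0'}/\wgamma_{\t_l'}=\wgamma_{\t_{\blam'}}/\wgamma_{\t'}$, so that $\rmf_{\t_{\blam'}\t_{\blam'}}\phi'_\t=(-1)^{\ell(d(\t))}\tfrac{\wgamma_{\t_{\blam'}}}{\wgamma_{\t'}}\rmf_{\t_{\blam'}\t'}$, which is the first identity. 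Applying $\ast$, which commutes with $'$ and sends $\rmf_{\s\t}\mapsto\rmf_{\t\s}$ and $\phi_\t\mapsto\phi_\t^*$, turns this into $(\phi_\t^*)'\rmf_{\t_{\blam'}\t_{\blam'}}=(-1)^{\ell(d(\t))}\tfrac{\wgamma_{\t_{\blam'}}}{\wgamma_{\t'}}\rmf_{\t'\t_{\blam'}}$, the second identity.

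The computation is entirely formal; the only thing requiring care — and the main (minor) obstacle — is the sign-and-order bookkeeping: one must check that conjugation reverses the dominance order (so $(\t_k')$ really is increasing and $\t_{\blam'}=(\t^\blam)'=\t_0'$), that $\t_k'=\t_{k-1}'s_{i_k}$ so the word read off the conjugate chain is again reduced, that the recursion of Lemma~\ref{M5prop2}(ii) for $\phi$ is oriented ``downwards'' along $\t^\blam=\t_0\rhd\cdots\rhd\t_l=\t$, and that the accumulated sign from Lemma~\ref{phiprime2} is exactly $(-1)^{\ell(d(\t))}$. Each of these matches a corresponding point in the proof of Lemma~\ref{scalaralt}, so no new idea is needed beyond carrying the $\pm1$-conjugation of Definition~\ref{prime2} through the computation in place of the $(-q)^{\pm1}$-conjugation of Definition~\ref{prime}.
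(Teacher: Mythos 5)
Your proposal is correct and is exactly what the paper intends: the paper itself proves this lemma only by remarking that it follows from Lemma \ref{phiprime2} together with the same argument as in the non-degenerate case (Lemma \ref{scalaralt}), which is precisely the substitution $T_i\mapsto s_i$, $\Phi\mapsto\phi$, $A_i\mapsto a_i$, $\gamma\mapsto r$ that you carry out, with the sign $(-1)^{\ell(d(\t))}$ accumulating from Lemma \ref{phiprime2} and the telescoping product $\wgamma_{\t_{\blam'}}/\wgamma_{\t'}$ coming from Lemma \ref{seminorma2B}. Your bookkeeping of the conjugate chain and the final application of $\ast$ (commuting with $'$) matches the paper's argument, so there is nothing to add.
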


Let $\bmu\in\P_{n-1}$ and $\s,\t\in\Std(\bmu)$. By (\ref{betab}), we have $$
{\rm f}_{\s\t}^{(n-1)}=\sum_{\blam\in\P_n}\sum_{\u,\v\in\Std(\blam)}b_{\u\v}^{\s\t}{\rm f}_{\u\v}^{(n)},
$$
where $b_{\u\v}^{\s\t}\in K$ for each pair $(\u,\v)$.

Replacing the $\gamma$-coefficients and the element $\Phi_\t$ of $\HH_{\ell,n}(q,\bQ)$ with the $r$-coefficients and the element $\phi_\t$ of $H_{\ell,n}(\bu)$, the following lemmas can be proved in the same way as the proof of
Lemmas \ref{gam'tgamt'}, \ref{fst3}, \ref{gammann+1}.

\begin{lem}\label{gam'tgamt'2} Suppose (\ref{ss0b}) holds and $R=K$ is a field.
	Let $\blam\in\P_n$ be a multipartition of $n$ and $\t\in\Std(\blam)$. Then we have that
	$$\wgamma_{\t'}\wgamma_{\t}'=\wgamma_{\t_{\blam'}}\wgamma_{\t^{\blam}}'.$$
\end{lem}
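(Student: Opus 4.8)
The plan is to follow the proof of Lemma~\ref{gam'tgamt'} essentially verbatim, with the $r$-coefficients $\wgamma_\t$ playing the role of the $\gamma$-coefficients and the ring involution $'$ of Definition~\ref{prime2} replacing the one of Definition~\ref{prime}. First I would reduce to the generic situation: since both sides are evaluations of rational functions in $u_1,\dots,u_\ell$, we may assume $u_1,\dots,u_\ell$ are indeterminates over $\Z$, so that the involution $'$ of Definition~\ref{prime2} is available. The only input about $'$ that we need is the degenerate analogue of $(\res_\t(k))'=\res_{\t'}(k)$, namely $(\wres_\t(k))'=-\wres_{\t'}(k)$ for all $1\le k\le n$, which holds by the action of $'$ on the Jucys--Murphy elements.

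Next I would fix a reduced expression $d(\t)=s_{i_1}\cdots s_{i_l}$, set $\t_k:=\t^{\blam}s_{i_1}\cdots s_{i_k}$ for $0\le k\le l$ (so $\t_0=\t^{\blam}$, $\t_l=\t$), and record the two chains $\t^{\blam}=\t_0\triangleright\t_1\triangleright\cdots\triangleright\t_l=\t$ and $\t_{\blam'}=\t_0'\triangleleft\t_1'\triangleleft\cdots\triangleleft\t_l'=\t'$, together with the combinatorial fact $\t_k'=\t_{k-1}'s_{i_k}$, exactly as in the proof of Lemma~\ref{gam'tgamt'}. From Definition~\ref{gamma2} one has $\wgamma_{\t}=\wgamma_{\t^{\blam}}\prod_{k=1}^l(\wgamma_{\t_k}/\wgamma_{\t_{k-1}})$, and applying $'$ reduces the whole problem to evaluating $(\wgamma_{\t_k}/\wgamma_{\t_{k-1}})'$ for each $k$.

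The heart of the argument, and the step needing the most care, is to check that $(\wgamma_{\t_k}/\wgamma_{\t_{k-1}})'=\wgamma_{\t_{k-1}'}/\wgamma_{\t_k'}$. By Definition~\ref{gamma2}(2), since $\t_{k-1}=\t_k(i_k,i_k+1)\triangleright\t_k$, the ratio $\wgamma_{\t_k}/\wgamma_{\t_{k-1}}$ equals $\bigl((\wres_{\t_{k-1}}(i_k)-\wres_{\t_k}(i_k))^2-1\bigr)/(\wres_{\t_{k-1}}(i_k)-\wres_{\t_k}(i_k))^2$. Applying $'$ and using $(\wres_\u(i_k))'=-\wres_{\u'}(i_k)$ sends the difference $\wres_{\t_{k-1}}(i_k)-\wres_{\t_k}(i_k)$ to $\wres_{\t_k'}(i_k)-\wres_{\t_{k-1}'}(i_k)$; since this quantity occurs only squared and the constant $1$ is fixed, one obtains exactly the expression furnished by Definition~\ref{gamma2}(2) for $\wgamma_{\t_{k-1}'}/\wgamma_{\t_k'}$ (here one uses $\t_k'=\t_{k-1}'s_{i_k}\triangleright\t_{k-1}'$). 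The point to highlight is the contrast with Lemma~\ref{gam'tgamt'}: no power of $q$ survives, because the factor $q^{-2}$ there came purely from $q'=q^{-1}$, which has no counterpart in the degenerate setting. So the bookkeeping of the sign changes introduced by $'$ is the only genuine subtlety, and it is harmless precisely because the relevant content difference enters squared.

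Finally I would substitute this back and telescope: $\wgamma_{\t}'=\wgamma_{\t^{\blam}}'\prod_{k=1}^l(\wgamma_{\t_{k-1}'}/\wgamma_{\t_k'})=\wgamma_{\t^{\blam}}'(\wgamma_{\t_0'}/\wgamma_{\t_l'})=\wgamma_{\t^{\blam}}'(\wgamma_{\t_{\blam'}}/\wgamma_{\t'})$, which rearranges to $\wgamma_{\t'}\wgamma_{\t}'=\wgamma_{\t_{\blam'}}\wgamma_{\t^{\blam}}'$. This last step is purely formal; apart from it and the reduction to indeterminates, the entire proof is the degenerate transcription of Lemma~\ref{gam'tgamt'}, as the paper already indicates.
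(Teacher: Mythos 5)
Your proof is correct and follows essentially the same route as the paper, which proves Lemma~\ref{gam'tgamt'2} by transcribing the argument of Lemma~\ref{gam'tgamt'} to the degenerate setting with $(\wres_\t(k))'=-\wres_{\t'}(k)$ in place of $(\res_\t(k))'=\res_{\t'}(k)$. Your observation that the sign change cancels because the content difference enters only through its square, so that no analogue of the factor $q^{-2}$ appears, is exactly the point that makes the degenerate identity cleaner.
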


\begin{lem}\label{fst32} Suppose (\ref{ss0b}) holds and $R=K$ is a field. Let $\bmu\in\P_{n-1}, \blam\in\P_n$, and $\s,\t\in\Std(\bmu)$, $\u,\v\in\Std(\blam)$. Then \begin{enumerate}
\item $b_{\u\v}^{\s\t}\neq 0$ only if $\u\downarrow_{n-1}=\s$ and $\v\downarrow_{n-1}=\t$;
\item $b_{\u\u}^{\s\s}\neq 0$ if and only if $\u\downarrow_{n-1}=\s$. In that case, $b_{\u\u}^{\s\s}=\wgamma^{(n-1)}_{\s}/\wgamma^{(n)}_{\u}$.
\end{enumerate}
\end{lem}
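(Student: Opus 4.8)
The plan is to transcribe the proof of Lemma~\ref{fst3} into the degenerate setting, replacing the Jucys--Murphy operators $\mL_m$ of $\HH_{\ell,n}(q,\bQ)$ by the Jucys--Murphy elements $L_m$ of $H_{\ell,n}(\bu)$, the modified contents $\res_\t(k)$ by $c_\t(k)$, the seminormal basis $\{\mff_{\s\t}\}$ by $\{\rmf_{\s\t}\}$, the $\gamma$-coefficients $\gamma_\t$ by the $r$-coefficients $\wgamma_\t$, and using Lemma~\ref{obobn2} and the separation condition~(\ref{separacontent1b}) in place of Lemma~\ref{obobn} and~(\ref{separacontent1}). As in the non-degenerate case we may assume $n\ge2$, and we use freely that~(\ref{ss0b}) for $n$ implies the same condition for $n-1$, so that $H_{\ell,n-1}(\bu)$ is again semisimple and~(\ref{separacontent1b}) holds for it. Once these substitutions are in place the argument is entirely routine; there is no serious obstacle, only one point of mild care indicated below.

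For part (a), recall that $\iota_1(L_m)=L_m$ for $1\le m\le n-1$. Applying $L_m$ on the right of both sides of~(\ref{betab}) and using Lemma~\ref{obobn2}~2), the left-hand side becomes $c^{(n-1)}_\t(m)\,\rmf^{(n-1)}_{\s\t}$ while the right-hand side becomes $\sum_{\blam\in\P_n}\sum_{\u,\v\in\Std(\blam)}b^{\s\t}_{\u\v}\,c^{(n)}_\v(m)\,\rmf^{(n)}_{\u\v}$. Since the $\rmf^{(n)}_{\u\v}$ are linearly independent, comparing coefficients yields $b^{\s\t}_{\u\v}\bigl(c^{(n)}_\v(m)-c^{(n-1)}_\t(m)\bigr)=0$ for every $1\le m\le n-1$. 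Hence if $b^{\s\t}_{\u\v}\ne0$ then $c_{\v\downarrow_{n-1}}(m)=c^{(n)}_\v(m)=c^{(n-1)}_\t(m)$ for all such $m$, and~(\ref{separacontent1b}), applied to the two $\Std$-tableaux $\v\downarrow_{n-1}$ and $\t$ of $\P_{n-1}$, forces $\v\downarrow_{n-1}=\t$; the symmetric computation with the left action of $L_m$ gives $\u\downarrow_{n-1}=\s$. The only step that needs a moment's attention is the compatibility $c_{\v\downarrow_{n-1}}(m)=c^{(n)}_\v(m)$ for $m\le n-1$ and the correct invocation of the separation condition in $\P_{n-1}$ rather than $\P_n$.

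For part (b), specialise to $\s=\t$; then $\u=\v$ is forced, since by part (a) we have $\u\downarrow_{n-1}=\s=\v\downarrow_{n-1}$ and $\Shape(\u)=\Shape(\v)$, so $\rmf^{(n-1)}_{\s\s}=\sum_{\blam\in\P_n}\sum_{\substack{\u\in\Std(\blam)\\ \u\downarrow_{n-1}=\s}}b^{\s\s}_{\u\u}\,\rmf^{(n)}_{\u\u}$. Dividing by $\wgamma^{(n-1)}_\s$ and using Lemma~\ref{obobn2}~3)--4), $\rmF^{(n-1)}_\s=\rmf^{(n-1)}_{\s\s}/\wgamma^{(n-1)}_\s$ is a primitive idempotent of $H_{\ell,n-1}(\bu)$ whose image under $\iota_1$ is $\sum_{\u\downarrow_{n-1}=\s}\bigl(\wgamma^{(n)}_\u/\wgamma^{(n-1)}_\s\bigr)b^{\s\s}_{\u\u}\,\rmF^{(n)}_\u$, a linear combination of the pairwise orthogonal primitive idempotents $\rmF^{(n)}_\u$; since $\iota_1$ is a ring homomorphism this element is idempotent, so squaring and comparing coefficients forces each $\bigl(\wgamma^{(n)}_\u/\wgamma^{(n-1)}_\s\bigr)b^{\s\s}_{\u\u}$ to lie in $\{0,1\}$, whence $b^{\s\s}_{\u\u}=\wgamma^{(n-1)}_\s/\wgamma^{(n)}_\u$ whenever $b^{\s\s}_{\u\u}\ne0$. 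Finally, apply $\iota_1$ to $\sum_{\bmu\in\P_{n-1}}\sum_{\s\in\Std(\bmu)}\rmF^{(n-1)}_\s=1$ and compare with $\sum_{\blam\in\P_n}\sum_{\u\in\Std(\blam)}\rmF^{(n)}_\u=1$: each $\u$ restricts to the unique tableau $\u\downarrow_{n-1}$, the $\rmF^{(n)}_\u$ are linearly independent and $\iota_1$ is injective, so the image of the first sum can omit no $\rmF^{(n)}_\u$; therefore $b^{\s\s}_{\u\u}\ne0$ for every $\u$ with $\u\downarrow_{n-1}=\s$, which completes the proof.
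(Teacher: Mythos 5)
Your proposal is correct and follows essentially the same route as the paper, which proves this lemma precisely by transcribing the proof of Lemma \ref{fst3} to the degenerate setting: part (a) via the left and right actions of the Jucys--Murphy elements together with the separation condition (\ref{separacontent1b}), and part (b) by expanding the idempotent $\rmf^{(n-1)}_{\s\s}/\wgamma^{(n-1)}_{\s}$ in the complete set of pairwise orthogonal primitive idempotents $\{\rmf^{(n)}_{\u\u}/\wgamma^{(n)}_{\u}\}$ and comparing the two decompositions of the identity. Your added remarks (restricting to $1\le m\le n-1$, invoking the separation condition in $\P_{n-1}$, and noting that (\ref{ss0b}) for $n$ implies it for $n-1$) are accurate refinements of the same argument.
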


\begin{lem}\label{gammann+12}
	Let $\bmu\in\P_{n-1}, \blam\in\P_n$ such that $\bmu=\blam\setminus\{\gamma\}$ for some removable node $\gamma$ of $[\blam]$. Let $\s,\t\in\Std(\bmu)$, $\u,\v\in\Std(\blam)$. If $\u\downarrow_{n-1}=\s$, then we have  	$$\frac{\wgamma_{\s}^{(n-1)}}{\wgamma_{\u}^{(n)}}=\frac{\wgamma_{\t^{\bmu}}^{(n-1)}}{\wgamma_{\a}^{(n)}},$$
where $\a\in\Std(\blam)$ is the unique standard $\blam$-tableau such that $\a\downarrow_{n-1}=\t^{\bmu}$.
\end{lem}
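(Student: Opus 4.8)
The plan is to follow the proof of Lemma \ref{gammann+1} essentially verbatim, replacing the $\gamma$-coefficients of $\HH_{\ell,n}(q,\bQ)$ from Definition \ref{gamma1} by the $r$-coefficients $\wgamma_\t$ of $H_{\ell,n}(\bu)$ from Definition \ref{gamma2}, and the residues $\res_\t(k)$ by the degenerate residues $\wres_\t(k)$. The structural fact that makes this transfer work is that part (2) of Definition \ref{gamma2} has exactly the same shape as part (2) of Definition \ref{gamma1}: if $\s=\t(i,i+1)\triangleright\t$, then $\wgamma_\t/\wgamma_\s$ is a rational expression in $\wres_\s(i)$ and $\wres_\t(i)$ alone.

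First I would record that, since $\u\downarrow_{n-1}=\s$ and $\bmu=\blam\setminus\{\gamma\}$, we have $\wres_\s(k)=\wres_\u(k)$ for every $1\le k\le n-1$. Next, fix a reduced expression $d(\s)=s_{i_1}s_{i_2}\cdots s_{i_m}$; since $d(\s)\in\Sym_{n-1}$, each $i_j$ satisfies $i_j\le n-2$. Setting $\s_0:=\t^{\bmu}$ and $\s_k:=\t^{\bmu}s_{i_1}\cdots s_{i_k}$ for $1\le k\le m$, we obtain a chain of standard $\bmu$-tableaux $\t^{\bmu}=\s_0\triangleright\s_1\triangleright\cdots\triangleright\s_m=\s$. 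Let $\u_k\in\Std(\blam)$ be the unique standard $\blam$-tableau with $\u_k\downarrow_{n-1}=\s_k$ (so $\u_k$ is obtained from $\s_k$ by placing the entry $n$ in the node $\gamma$). Because each $i_j\le n-2$, the transposition $s_{i_j}$ fixes $n$, hence $\u_k=\u_{k-1}s_{i_k}$, and we get a parallel chain of standard $\blam$-tableaux $\a=\u_0\triangleright\u_1\triangleright\cdots\triangleright\u_m=\u$ with $\wres_{\u_k}(j)=\wres_{\s_k}(j)$ for all $0\le k\le m$ and $1\le j\le n-1$.

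The one substantive step is then the observation that, for each $1\le k\le m$, part (2) of Definition \ref{gamma2} gives $\wgamma^{(n-1)}_{\s_k}/\wgamma^{(n-1)}_{\s_{k-1}}$ and $\wgamma^{(n)}_{\u_k}/\wgamma^{(n)}_{\u_{k-1}}$ by the same formula, evaluated at the residues at the index $i_k$ of the two tableaux in the pair; since $i_k\le n-2<n$ these residues coincide, so $\wgamma^{(n-1)}_{\s_k}/\wgamma^{(n-1)}_{\s_{k-1}}=\wgamma^{(n)}_{\u_k}/\wgamma^{(n)}_{\u_{k-1}}$ for every $k$. Telescoping over $k=1,\dots,m$ yields $\wgamma^{(n-1)}_\s/\wgamma^{(n-1)}_{\t^{\bmu}}=\wgamma^{(n)}_\u/\wgamma^{(n)}_\a$, which rearranges to the claimed identity $\wgamma^{(n-1)}_\s/\wgamma^{(n)}_\u=\wgamma^{(n-1)}_{\t^{\bmu}}/\wgamma^{(n)}_\a$. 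I do not expect a genuine obstacle; the only point requiring a little care is the purely combinatorial bookkeeping that adjoining the node $\gamma$ with entry $n$ commutes with the right action of $s_{i_k}$ and preserves both standardness and the dominance order, all of which hold precisely because $i_k\le n-2$ means $s_{i_k}$ does not move the entry $n$.
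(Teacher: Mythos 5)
Your proposal is correct and coincides with the paper's intended argument: the paper proves this lemma by declaring it "can be proved in the same way as the proof of Lemma \ref{gammann+1}," which is precisely the transfer you carry out, with the recursive formula in Definition \ref{gamma2}(2) playing the role of Definition \ref{gamma1}(2). Your additional bookkeeping (that $i_j\le n-2$ so $s_{i_j}$ fixes the entry $n$ and the two chains of tableaux run in parallel) is exactly the point that makes the telescoping work.
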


\medskip
\noindent
{\textbf{Proof of Theorems \ref{mainthm1b}, \ref{mainthm2b}}}: Theorem \ref{mainthm1b} follows from Lemmas \ref{scalaralt2}, \ref{scalaralt2} and a similar argument used in the proof of Theorem \ref{mainthm1}.
Theorem \ref{mainthm2b} follows from Lemmas \ref{fst32}, \ref{gammann+12} and a similar argument used in the proof of Theorem \ref{mainthm2}.\qed

\bigskip
\bigskip
\bigskip


\begin{thebibliography}{2}

\bibitem{A1} {\sc S.~Ariki}, {\em On the semi-simplicity of the Hecke algebra of $(\Z/r\Z)\wr\Sym_n$}, J. Algebra, {\bf 169} (1994), 216--225.

\bibitem{A2} \leavevmode\vrule height 2pt depth -1.6pt width 23pt, {\em Representation theory of a Hecke algebra of $G(r,p,n)$}, J. Algebra, {\bf 177} (1995), 164--185.

\bibitem{A3} \leavevmode\vrule height 2pt depth -1.6pt width 23pt, {\em On the decomposition numbers of the Hecke algebra of $G(m,1,n)$}, J. Math.
Kyoto Univ., {\bf 36} (1996), 789--808.

\bibitem{AK} {\sc S.~Ariki and K.~Koike}, {\em  A Hecke algebra of $(\Z/r\Z)\wr\Sym_n$ and construction of its representations}, Adv. Math., {\bf 106} (1994), 216--243.

\bibitem{AMR} {\sc S.~Ariki, A.~Mathas and H.~Rui}, {\em Cyclotomic Nazarov-Wenzl algebras}, Nagoya Math. J., {\bf 182} (2006), 47--134 (special
issue in honour of George Lusztig).


\bibitem{BK}
{\sc J.~Brundan and A.~Kleshchev}, {\em Schur-Weyl duality for higher levels}, Selecta Math., {\bf 14} (2008), 1--57.

\bibitem{BK:GradedKL}
\leavevmode\vrule height 2pt depth -1.6pt width 23pt,  {\em Blocks of cyclotomic {H}ecke algebras and {K}hovanov-{L}auda algebras}, Invent. Math., {\bf 178}
  (2009), 451--484.

\bibitem{BK:Gradedecomp}
\leavevmode\vrule height 2pt depth -1.6pt width 23pt,  {\em Graded decomposition numbers for cyclotomicHecke algebras}, Adv. Math., {\bf 222} 1883--1942 (2009)





\bibitem{BM:cyc}
{\sc M.~Brou{\'e} and G.~Malle}, {\em Zyklotomische {H}eckealgebren},
  Ast\'erisque, \textbf{212} (1993), 119--189.
\newblock Repr{\'e}sentations unipotentes g{\'e}n{\'e}riques et blocs des
  groupes r{\'e}ductifs finis.


\bibitem{Ch} {\sc I.~Cherednik}, {\em A new interpretation of Gelfand-Tzetlin bases}, Duke Math. J., {\bf 54} (1987), 563--577.

\bibitem{DJM} {\sc R.~Dipper, G.D.~James and E.~Murphy}, {\em Hecke algebras of type
  $B_n$ at roots of unity}, Proc. London Math. Soc., {\bf 70}(3) (1995),
  505--528.

\bibitem{DJM:cyc}
{\sc R.~Dipper, G.~James, and A.~Mathas}, {{\em Cyclotomic {$q$}-{S}chur algebras}}, Math. Z., {\bf 229} (1998), 385--416.



\bibitem{GL} {\sc J.J.~Graham and G.I.~Lehrer}, {\em Cellular algebras}, Invent. Math., {\bf 123} (1996), 1--34.

\bibitem{Gro}
{\sc I.~Grojnowski}, {\em Affine $\widehat{\mathfrak{sl}}_p$ controls the modular representation theory of the symmetric group and related Hecke algebras}, preprint, arXiv:math.RT/9907129, 1999.


\bibitem{HuMathas:GradedCellular}
{\sc J.~Hu and A.~Mathas},
{\em Graded cellular
  bases for the cyclotomic {K}hovanov-{L}auda-{R}ouquier algebras of type
  {$A$}}, Adv. Math., \textbf{225} (2010), 598--642.

\bibitem{HuMathas:SeminormalQuiver}
\leavevmode\vrule height 2pt depth -1.6pt width 23pt,
{\em Seminormal forms
  and cyclotomic quiver {H}ecke algebras of type {A}}, Math. Ann.,
  \textbf{364} (2016), 1189--1254.

\bibitem{HuMathas:TAMS}
\leavevmode\vrule height 2pt depth -1.6pt width 23pt,
{\em Fayers' conjecture and the socle of cyclotomic Weyl modules}, Trans. Amer. Math. Soc., {\bf 371}(2) (2019), 1271--1307.




\bibitem{Klesh:book}
{\sc A.~S. Kleshchev}, {\em Linear and projective representations of symmetric
  groups}, CUP, 2005.


\bibitem{JM} {\sc G.D.~James and A.~Mathas}, {\em The Jantzen sum formula for cyclotomic $q$-Schur algebras}, Trans. Amer. Math.
Soc., {\bf 352} (2000), 5381--5404.


\bibitem{Ma1} {\sc A.~Mathas}, {\em Iwahori-Hecke algebras and Schur algebras of the symmetric group}, Amer. Math. Soc., 1999.

\bibitem{Ma} \leavevmode\vrule height 2pt depth -1.6pt width 23pt,  {\em Matrix units and generic degrees for the Ariki-Koike algebras}, {J. Algebra,} {\bf 281} (2004), 695--730.

\bibitem{Ma2} \leavevmode\vrule height 2pt depth -1.6pt width 23pt,  {\em Seminormal forms and Gram determinants for cellular algebras}, J. Reine Angew. Math.,
{\bf 619} (2008), 141-–173 (With an appendix by Marcos Soriano).

\bibitem{Mur1}
{\sc G.E.~Murphy}, {\em A new construction of Young’s semi-normal representation of the symmetric groups}, J. Algebra, {\bf 69} (1981), 287--297.

\bibitem{Mur2}
\leavevmode\vrule height 2pt depth -1.6pt width 23pt, {\em The idempotents of the symmetric group and Nakayama's conjecture}, J. Algebra, {\bf 81} (1983), 258--265.

\bibitem{Mur3}
\leavevmode\vrule height 2pt depth -1.6pt width 23pt,  {\em On the representation theory of the symmetric groups and associated Hecke algebras}, J. Algebra, {\bf 152} (1992), 492--513.

\bibitem{Mur4}
\leavevmode\vrule height 2pt depth -1.6pt width 23pt,  {\em The representations of Hecke algebras of type $A_n$}, J. Algebra, {\bf 173} (1995), 97--121.

\bibitem{Zh} {\sc D.~Zhao}, {\em Schur elements of the degenerate cyclotomic Hecke algebras}, Israel Journal of Mathematics, {\bf 205}, (2015), 485--507.

\end{thebibliography}
\end{document}